\documentclass[reqno]{amsart}
\usepackage{hyperref}
\usepackage{amssymb}
\usepackage{amsmath}
\usepackage{amsthm}
\usepackage{mathtools}
\usepackage[utf8]{inputenc}
\usepackage[T1]{fontenc}
\usepackage{enumerate}
\usepackage[svgnames]{xcolor}
\usepackage{graphicx}

\hypersetup{
  pdftitle={On a singularly
perturbed Gross-Pitaevskii equation},
  pdfauthor={Isabella Ianni <isabella.ianni@unina2.it>, Stefan Le Coz
    <slecoz@math.univ-toulouse.fr>, Julien Royer <julien.royer@math.univ-toulouse.fr>},
  pdfsubject={On a singularly
perturbed Gross-Pitaevskii equation},
  pdfkeywords={Gross-Pitaevskii equation, nonlinear
  Schr\"odinger equation, singularly perturbed equations, partial
  differential equations on graphs, Cauchy problem, stationary wave, black soliton, orbital stability, instability} ,
  colorlinks=true,
  linkcolor=DarkBlue  ,          
  citecolor=DarkRed,        
  filecolor=DarkMagenta,      
  urlcolor=DarkGreen,           
}

\newtheorem{theorem}{Theorem}[section]
\newtheorem{proposition}[theorem]{Proposition}
\newtheorem{lemma}[theorem]{Lemma}
\newtheorem{corollary}[theorem]{Corollary}

\theoremstyle{definition}
\newtheorem{definition}[theorem]{Definition}

\theoremstyle{remark}
\newtheorem{remark}[theorem]{Remark}
\newtheorem*{notation}{Notation}
\newtheorem*{acknowledgment}{Acknowledgment}

\numberwithin{equation}{section}

\DeclarePairedDelimiter{\norm}{\lVert}{\rVert}
\DeclarePairedDelimiter{\abs}{\lvert}{\rvert}


\newcommand{\dual}[2]{\left\langle #1,#2 \right\rangle}

\newcommand{\eps}{\varepsilon}

\newcommand{\N}{\mathbb{N}}
\newcommand{\R}{\mathbb{R}}
\newcommand{\C}{\mathbb C}


\DeclareMathOperator{\sech}{sech}
\DeclareMathOperator{\sign}{sign}
\DeclareMathOperator{\supp}{supp}

\renewcommand{\leq}{\leqslant}
\renewcommand{\geq}{\geqslant}

\DeclareMathAlphabet{\mathpzc}{OT1}{pzc}{m}{it}
\renewcommand{\Re}{\mathcal R\!\mathpzc{e}}
\renewcommand{\Im}{\mathcal I\!\mathpzc{m}}

\newcommand{\Hg}{H_\gamma}

\newcommand{\Ec}{{\mathcal E}} 
\newcommand{\Sc}{{\mathcal S}}
\newcommand{\st}{\,:\,}	

\renewcommand{\a}{\alpha}
\renewcommand{\b}{\beta}
\newcommand{\g}{\gamma}
\newcommand{\G}{\Gamma}
\renewcommand{\d}{\delta}

\newcommand{\e}{\varepsilon}
\newcommand{\z}{\zeta} 

\renewcommand{\th}{\theta}

\renewcommand{\k}{\kappa}

\renewcommand{\l}{\lambda}
\renewcommand{\L}{\Lambda}

\newcommand{\n}{\nu}

\newcommand{\s}{\sigma}

\renewcommand{\t}{\tau}
\newcommand{\f}{\varphi}
\newcommand{\vf}{\phi}
\newcommand{\h}{\chi}
\newcommand{\p}{\psi}

\newcommand{\nr}[1]{\left\Vert #1\right\Vert}         
\newcommand{\innp}[2]{\left< #1 , #2 \right>}         

\newcommand {\limt}[2]{\xrightarrow[#1 \to #2]{}}
\newcommand{\pppg}[1] {\left< #1 \right>}

\newcommand{\tHg}{\tilde H_\g}
\newcommand{\tSgHg}[1] {T_\g(#1)}
\newcommand{\tSgHo}[1] {T_0(#1)}

\newcommand{\WW}{W_{R}(T)}

\newcommand{\WWWbis}{\tilde W_{\tilde R}(\tilde T)}

\newcommand{\ie}{{\it{i.e. }}}


\begin{document}

\title[On a singularly
perturbed Gross-Pitaevskii equation]
{On the Cauchy problem and the black solitons of a singularly perturbed Gross-Pitaevskii equation}

\author[I.~Ianni]{Isabella Ianni}
\thanks{The work of I. I. is partially supported by INDAM-GNAMPA}

\address[Isabella Ianni]{
  Dipartimento di Matematica e Fisica
  \newline\indent
  Seconda Universit\`a di Napoli
  \newline\indent
  Viale Lincoln 5, 81100 Caserta
  \newline\indent
  Italia}
\email[Isabella Ianni]{isabella.ianni@unina2.it}

\author[S.~Le Coz]{Stefan Le Coz}
\thanks{The work of S. L. C. is 
  partially supported by ANR-11-LABX-0040-CIMI within the
  program ANR-11-IDEX-0002-02 and  ANR-14-CE25-0009-01}

\author[J. Royer]{Julien Royer}
\thanks{The work of J. R. is 
  partially supported by ANR-11-BS01019-01.}

\address[Stefan Le Coz and Julien Royer]{Institut de Math\'ematiques de Toulouse,
  \newline\indent
  Universit\'e Paul Sabatier
  \newline\indent
  118 route de Narbonne, 31062 Toulouse Cedex 9
  \newline\indent
  France}
\email[Stefan Le Coz]{slecoz@math.univ-toulouse.fr}
\email[Julien Royer]{jroyer@math.univ-toulouse.fr}

\subjclass[2010]{35Q55(35R02,35B35,35Q51)}

\date{\today}
\keywords{Gross-Pitaevskii equation, nonlinear
  Schr\"odinger equation, singularly perturbed equations, partial
  differential equations on graphs, Cauchy problem, stationary wave, black soliton, orbital stability, instability}

\begin{abstract}
  We consider the  one-dimensional Gross-Pitaevskii equation perturbed by a Dirac
  potential. Using a fine
  analysis of the properties of the linear
  propagator,  we study the well-posedness of the
  Cauchy Problem in the energy
  space of functions with modulus 1 at infinity. 
  Then  we show the
  persistence of the stationary black soliton of the
  unperturbed problem as a solution. We also prove the existence of
  another branch of non-trivial stationary
  waves. Depending on the attractive or repulsive nature of the Dirac
  perturbation and of the type of stationary solutions, we
  prove orbital stability via a variational approach, or linear
  instability via a bifurcation argument. 
\end{abstract}

\maketitle

\vspace{-23pt}

\tableofcontents

\section{Introduction}

We consider the  one-dimensional singularly perturbed
Gross-Pitaevskii equation 
\begin{equation}\label{eq:gp}
  iu_t+u_{xx}-\gamma\delta u+(1-|u|^2)u=0,
\end{equation}
with the boundary condition
\begin{equation}
  \label{eq:boundary}
  |u(t,x)|\to 1,\quad \text{as}\quad |x|\to+\infty.
\end{equation}
Here, $u:\R\times\R\to\mathbb C$, $\gamma\in\R$, $\delta$ is the Dirac
distribution at $0$ and the indices denote the derivatives.

The Gross-Pitaevskii equation is a defocusing nonlinear Schr\"odinger
equation with non-standard boundary conditions.
It has numerous applications in physics, in
particular in nonlinear optics or for Bose-Einstein condensates. 
Since we assume that $|u|\to 1$ at infinity,  a 
rich nonlinear dynamics is possible.
In particular, there exist solutions of
\eqref{eq:gp} either stationary or propagating a fixed profile:  the
\emph{dark} and \emph{grey solitons}.

Perturbations of nonlinear Schr\"odinger equations with one or more Dirac
distributions appear in different contexts in Physics and
Mathematics. 

In nonlinear optics, when polarization of light and birefringence are taken
into account in the modeling of optical
fibers, the resulting model is a system of coupled nonlinear
Schr\"odinger equations, see \cite{Ag07}.
In the study of the soliton-soliton collisions (see e.g. \cite{DeLeWe16,IaLe14}),
if one of the component is very narrow, then its effect on the other
via the coupling can be approximated by the Dirac distribution (see
\cite{CaMa95} and the references therein). 
The mathematical phenomena
related to the interaction of a soliton with the Dirac perturbation have been
studied in depth, first in the groundlaying work  by Goodman, Holmes and Weinstein
\cite{GoHoWe04} and then in a series of papers by Datchev, Holmer, Marzuola and/or Zworski \cite{DaHo09,HoMaZw07-1,HoMaZw07-2,HoZw07}.

Dirac distributions also naturally appear for nonlinear Schr\"odinger equations on graphs. The motivation comes from nanotechnology where
networks of quantum wires are modeled by nonlinear Schr\"odinger
equations on graphs with the Laplacian on the edges and  Kirchoff
transmission conditions at the vertices.  
The equation \eqref{eq:gp} constitutes the simplest 
example of a nonlinear Schr\"odinger equation posed on a graph
consisting of only one vertex and two edges. An
introduction to nonlinear Schr\"odinger equations on graphs is provided
by Noja in \cite{No14}. 

The mathematical study of singularly perturbed nonlinear Schr\"odinger equations
started only a few years ago and is currently in very active
development. Several lines of investigation have been followed. One
problem is to understand the effect of the perturbation on the
dispersive nature of the equation. Outstanding progresses have been
made
recently in this direction by Banica and Ignat
\cite{BaIg11,BaIg14}. Another challenge is to analyse the solitons and
their stability.
After the pioneering work of Fukuizumi and
co. \cite{FuJe08,FuOhOz08,LeFuFiKsSi08}, the analysis of solitons for
nonlinear Schr\"odinger equations on graphs has known a tremendous
development under the impulsion of Adami and co.
\cite{AdCaFiNo14-2,AdCaFiNo14,AdNo09,AdNo13,AdNoOr13,AdNoVi13}. Surprising
phenomena appear, e.g. bistability in the recent work of Genoud,
Malomed and Weish\"aupl \cite{GeMaWe16}. Let us mention also the
recent study of the scattering problem by Banica and Visciglia \cite{BaVi16}.

To our knowledge, our work is the first one where the singularly
perturbed Gross-Pitaevskii equation \eqref{eq:gp} with the
non-standard boundary conditions \eqref{eq:boundary} is considered. We are interested by the Cauchy problem for \eqref{eq:gp} and by
the existence and stability of stationary solutions. 
As
detailed below, two main difficulties arise. First,
due to the non-standard boundary conditions, the natural energy space
is not a vector space and we have to rethink entirely the strategy to
solve the Cauchy problem. Second, the presence of the Dirac perturbation
generates subtle modifications on the stationary solutions of the equation, thus its treatment requires a fine analysis, in
particular for the spectral part of the study.

Before presenting our results, we give some preliminaries on the
structure of \eqref{eq:gp}. 
At least formally, we have the conservation of  the energy $E_\gamma$ defined by
\[
E_\gamma(u)=\frac12\norm{u'}_{L^2}^2+\frac\gamma2|u(0)|^2+\frac14\int_\R(1-|u|^2)^2 \, dx,
\]
where $'$ denotes the derivative with respect to the variable $x$.
Then the equation \eqref{eq:gp} can be rewritten into the Hamiltonian form 
\[
iu_t=\partial E_\gamma(u).
\]
This energy is defined in the energy space 
\[
\mathcal E :=\left\{v\in H^1_{\mathrm{loc}}(\R):\; v'\in L^2(\R),\,(1-|v|^2)\in L^2(\R) \right\}.
\]
Unfortunately $\mathcal E$ is not a vector space and this yields several difficulties in the analysis. 
We   will
endow $\mathcal E$ with the structure of a complete metric
space. 
Several choices are possible for the distance (see e.g. the
discussion in \cite{Ge06}). 
In this work, we have used the two distances $d_0$ and $d_\infty$
defined as follows.
For $u,v \in \Ec$ we set 
\begin{align}
  d_0 (u,v) &= \nr{u' -v'}_{L^2} + \abs{u(0) - v(0)} + \nr{\abs u^2 - \abs v^2}_{L^2},\label{eq:d_0}\\
  d_\infty  (u,v) &= \nr{u'-v'}_{L^2} + \nr{u-v}_{L^{\infty}} + \nr{\abs u^2 - \abs v^2}_{L^2}\label{eq:d_infty}.
\end{align}
The size of $u$ in $\Ec$ will be measured with the quantity
\begin{equation} \label{def-abs-Ec}
  \abs u_\Ec^2 = E_0(u) = \frac 1 2 \nr{u'}^2_{L^2} + \frac14\int_\R(1-|u|^2)^2 \, dx.
\end{equation}
Note that we have used  $E_0$ instead of $E_\gamma$ in the definition
of $\abs u_\Ec$ because the Dirac perturbation is not encoded in the
energy space. Notice moreover that $E_\g(u)$ may be negative when $\g$ is negative.

\subsection{The Cauchy Problem}

Our first main result concerns the well-posedness of the Cauchy
Problem for \eqref{eq:gp}  in the energy space $\mathcal E$. 

A lot of research has been devoted in the
last decades to the study of the Cauchy Problem for various dispersive
PDE and one would expect that a classical-looking equation like \eqref{eq:gp}
is already  covered by existing
results. This is however not the case, as most of the works on
dispersive PDE deal with well-posedness in vector function
spaces for localized or periodic functions. 
Because of the condition $|u|\to1$ at infinity, \eqref{eq:gp} does not fall into that category, and the Cauchy Theory for non-vector function spaces like $\Ec$ is still at its
early stages of development (see e.g.  \cite{Ga08,Ge06,Ge08}).
Another difficulty arising when dealing
specifically with \eqref{eq:gp} is the effect of the Dirac
perturbation, which causes a loss
of regularity at $x=0$ for the solution.

Our result is the following. 

\begin{theorem} [The Cauchy Problem] \label{th-pb-cauchy}
  Let $\g \in \R$. Then for any $u_0 \in \Ec$ the problem \eqref{eq:gp} has a
  unique, global, continuous (for $ d_\infty $ and hence $ d_0$) solution $u : \R \to \Ec$ with $u(0) = u_0$.
  Moreover, the following properties are satisfied.

  \begin{itemize}  
  \item[(i)] \emph{Energy conservation:}
    For all $t \in \R$ we have 
    \[
    E_\g(u(t)) = E_\g (u_0).
    \]
  \item[(ii)] \emph{Continuity with respect to the initial condition:}
    For $R > 0$ and $T > 0$ there exists $C > 0$ such that for $u_0,\tilde u_0 \in \Ec$ with $\abs{u_0}_\Ec \leq R$ and $\abs{\tilde u_0}_\Ec  \leq R$ the corresponding solutions $u$ and $\tilde u$ satisfy
    \[
    \forall t \in (-T,T), \quad  d_\infty  \big( u(t) , \tilde u (t) \big) \leq C  d_\infty (u_0,\tilde u_0).
    \]
  \end{itemize}
\end{theorem}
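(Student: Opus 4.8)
The plan is to recast \eqref{eq:gp} as a semilinear Schr\"odinger equation on the \emph{vector} space $H^1(\R)$, to solve it by a contraction argument built on the linear propagator $T_\g(t) = e^{-itH_\g}$ associated with the self-adjoint operator $H_\g = -\partial_x^2 + \g\delta$, and then to globalize and to establish (i)--(ii) by means of the conservation of $E_\g$ and Gronwall estimates. The step I expect to be the real obstacle — and on which all the rest rests — is a quantitative analysis of $T_\g$. It is elementary that $T_\g$ is unitary on $L^2$ and, since $\g\abs{v(0)}^2$ is infinitesimally form-bounded with respect to $\nr{v'}_{L^2}^2$ (so that $H_\g$ has form domain $H^1$ and is bounded below), that $T_\g$ is bounded on $H^1$ uniformly in $t$, by functional calculus. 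The crucial point is a \emph{Zhidkov-type} statement: $T_\g$ extends to every $v$ with $v' \in L^2$ and $v \in L^\infty$, with $T_\g(t)v - v \in H^1$, $t \mapsto T_\g(t)v - v$ continuous into $H^1$,
\[
\nr{T_\g(t)v - v}_{H^1} \leq C(1+\abs t)\big(\nr{v'}_{L^2} + \nr v_{L^\infty}\big),
\]
and $-i\int_0^t T_\g(t-s) H_\g v\, ds = T_\g(t)v - v$. This should follow from the explicit spectral resolution of $H_\g$ (generalized eigenfunctions, plus the eigenvalue $-\g^2/4$ and its eigenfunction $\propto e^{\g\abs x/2}$ when $\g<0$): the factor $e^{-it\lambda}-1$ kills the low-energy singular part of the distorted Fourier transform of $v$ that prevents $v$ from lying in $L^2$, $v'\in L^2$ controls the high energies, and the oscillatory contribution of the Dirac term — which produces the jump $(T_\g(t)v)'(0^+) - (T_\g(t)v)'(0^-) = \g(T_\g(t)v)(0)$, hence the loss of regularity at $x=0$ — is absorbed into the $\nr v_{L^\infty}$ term. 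This is the ``fine analysis of the linear propagator'' announced in the introduction.

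Given $u_0 \in \Ec$, I would then seek $u = u_0 + w$ with $w \in C([-T,T],H^1)$ and $w(0)=0$. Since $\Ec$ is contained in the class above, the identity just stated turns \eqref{eq:gp} into the fixed-point equation
\[
w(t) = \big(T_\g(t)u_0 - u_0\big) + i\int_0^t T_\g(t-s)\big(1-\abs{u_0+w(s)}^2\big)\big(u_0+w(s)\big)\, ds .
\]
Because $\Ec + H^1 \subset \Ec$ and $1-\abs v^2$ is bounded on $\Ec$, the nonlinearity $v \mapsto (1-\abs v^2)v$ maps $u_0 + (\text{bounded set of }H^1)$ into $H^1$ and is Lipschitz there — the only dangerous term, $(1-\abs v^2)v'$, lies in $L^2$ precisely because $1-\abs v^2 \in L^\infty$ — and in one dimension $H^1 \hookrightarrow L^\infty$ makes the nonlinearity energy-subcritical, so no Strichartz estimate is needed. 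Combined with the $H^1$-boundedness of $T_\g$ and the Zhidkov bound on $T_\g(t)u_0 - u_0$, this makes the right-hand side a contraction on a ball of $C([-T,T],H^1)$ for $T$ small depending only on $\abs{u_0}_\Ec$, which yields a unique local solution $u \in C((-T,T),\Ec)$, continuous for $d_\infty$ and a fortiori for $d_0 \leq d_\infty$, with local existence time uniform on $\Ec$-balls.

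Conservation of energy (i) is then obtained by the usual regularization argument: the Hamiltonian identity $\frac{d}{dt}E_\g(u) = \Re\dual{iu_t}{u_t}_{L^2} = 0$ is made rigorous on approximate, smoother data and passed to the limit using the $H^1$-continuity of the flow and the continuity of $E_\g$ for $d_0$. Global existence follows from combining (i) with the a priori bound
\[
\abs{u(t)}_\Ec^2 = E_0(u(t)) = E_\g(u_0) - \tfrac\g2\abs{u(t,0)}^2
\]
and the standard fact that $\Ec \subset L^\infty$ with $\nr v_{L^\infty}$ bounded by a sublinear function of $\abs v_\Ec$: this makes $\abs{u(t,0)}^2 \leq \nr{u(t)}_{L^\infty}^2$ subquadratic in $\abs{u(t)}_\Ec$, whence $\sup_t \abs{u(t)}_\Ec \leq C(E_\g(u_0),\g)$; since the local time is uniform on $\Ec$-balls, the solution extends to all of $\R$, and uniqueness propagates from the local statement.

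Finally, for (ii), let $u,\tilde u$ be the solutions with $\abs{u_0}_\Ec, \abs{\tilde u_0}_\Ec \leq R$; by the previous step both stay in a fixed $\Ec$-ball of radius $R' = R'(R,\g)$. The difference $r = u - \tilde u$ solves $ir_t = H_\g r - G$ with
\[
G = (1-\abs u^2)u - (1-\abs{\tilde u}^2)\tilde u = -\big(\abs u^2 - \abs{\tilde u}^2\big)\tilde u + \big(1-\abs u^2\big)\, r ,
\]
and the key observation is that $\abs u^2 - \abs{\tilde u}^2 = (\abs u^2 - 1) - (\abs{\tilde u}^2 - 1) \in L^2$ and $1-\abs u^2 \in L^2$, so that — although $r(0) = u_0 - \tilde u_0$ need not belong to $L^2$ — one has $\nr{G(s)}_{H^1} \leq C(R)\, d_\infty(u(s),\tilde u(s))$, each seemingly bad product in $G$ and $G'$ being rescued by an $L^\infty$ bound. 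Applying the Zhidkov-type estimate of the first step to $r(0)$ and the $H^1$-boundedness of $T_\g$ to the Duhamel integral gives $r(t) - r(0) \in H^1$, with $\nr{r(t)-r(0)}_{H^1}$ bounded by $C(1+\abs t)\, d_\infty(u_0,\tilde u_0)$ plus $C(R)$ times the time-integral of $d_\infty(u(\cdot),\tilde u(\cdot))$; this controls the $\nr{\,\cdot'\,}_{L^2}$ and $\nr{\,\cdot\,}_{L^\infty}$ components of $d_\infty(u(t),\tilde u(t))$, while the last one is controlled via the local conservation law $\partial_t\abs u^2 = -\partial_x\big(2\Im(\bar u u_x)\big)$, which yields (after a routine mollification) $\frac{d}{dt}\nr{\abs u^2 - \abs{\tilde u}^2}_{L^2}^2 \leq C(R)\big(\nr{r'}_{L^2} + \nr r_{L^\infty}\big)^2$. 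A Gronwall argument on a short interval, iterated over $(-T,T)$, then gives $d_\infty(u(t),\tilde u(t)) \leq C\, d_\infty(u_0,\tilde u_0)$. In short, the single genuinely new difficulty is the quantitative analysis of the singular operator $H_\g$ in the first step; once it is in hand, everything downstream is routine, the only feature specific to \eqref{eq:gp} being the need to run the nonlinear estimates through the $L^2$ quantities $1-\abs u^2$ and $\abs u^2 - \abs{\tilde u}^2$ rather than through $u$ or $r$ themselves.
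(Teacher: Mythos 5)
Your architecture matches the paper's almost exactly: reduce to a fixed point for $u - u_0$ (the paper uses $w = u(t) - \tSgHg{t}u_0$, an immaterial difference) in $C^0((-T,T),H^1(\R))$, with the whole construction resting on the single nonstandard linear fact that $t \mapsto \tSgHg{t}v - v$ is continuous with values in $H^1(\R)$ for $v$ in a Zhidkov-type class containing $\Ec$; then energy conservation by regularization, globalization via the comparison between $E_\g$ and $\abs{\cdot}_\Ec$ (your identity $E_0(u(t)) = E_\g(u_0) - \tfrac\g2\abs{u(t,0)}^2$ plus the sublinear $L^\infty$ bound is exactly Lemma \ref{lem-abs-Ec} combined with Corollary \ref{cor-loc-cauchy}), and Lipschitz dependence from the contraction. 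Two of your shortcuts are genuinely nicer than what the paper does: uniform-in-time $H^1$ boundedness of $e^{-it\Hg}$ follows, as you say, from the fact that $\g\abs{v(0)}^2$ is infinitesimally form-bounded, so that the graph norm of $(\Hg+c)^{1/2}$ is equivalent to the $H^1$ norm and is preserved by the group — the paper instead derives a bound on compact time intervals from the kernel (Proposition \ref{prop-lin-evol-H1}). Conversely, your control of $\nr{\abs u^2 - \abs{\tilde u}^2}_{L^2}$ through the local mass conservation law is an unnecessary detour: writing $u = u_0 + w$, $\tilde u = \tilde u_0 + \tilde w$ with $w,\tilde w \in H^1$, the purely algebraic expansion of Lemma \ref{lem-Ec-H1}(iii) already gives it.

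The genuine gap is that the key lemma is asserted, not proved, and it is where essentially all of the paper's work lies (all of Section \ref{sec:propagator}). Your proposed route — the distorted Fourier transform of $\Hg$, with $e^{-it\l}-1$ taming the low-energy part and $v' \in L^2$ the high energies — is plausible but nontrivial to execute: for $v \in L^\infty \cap \dot H^1 \setminus L^2$ the generalized Fourier transform is only a distribution, the zero-energy behaviour of the generalized eigenfunctions and, for $\g<0$, the bound-state projection onto $e^{\g\abs x/2} \notin L^1$ paired against a non-decaying $v$ all require justification. The paper instead writes $e^{-it\Hg} = e^{-itH_0} + \G(t)$ with an explicit kernel for $\G(t)$, and the case $\g<0$ forces the dedicated decomposition $\G = \G_1 + \G_2$ of Lemma \ref{lem-decomposition-G} because $s - \abs x - \abs y$ vanishes on the integration domain; some analogue of that difficulty will resurface in any approach. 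A second, smaller gap: in the regularization step for energy conservation, ``smoother data'' cannot mean $H^2$-regular at the origin — the approximating data must satisfy the jump condition $u'(0^+)-u'(0^-)=\g u(0)$, i.e.\ lie in $X^2_\g$, and one must separately prove (as in Proposition \ref{prop-cauchy-X2}) that such data produce strong solutions with $\partial_t u \in C^0(L^2)$ persisting up to the maximal time; your sketch hides both points.
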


The proof of Theorem \ref{th-pb-cauchy} is based on a fixed point
argument. Several steps are necessary.

A main task is to acquire a good understanding on the linear
propagation. We denote  by
$H_\gamma$
the unbounded self-adjoint operator rigorously defined from the formal
expression $-\partial_{xx}+\gamma\delta$ (see Section \ref{sec:higher}):
\[
\begin{cases}
\Hg = - \displaystyle\frac {d^2}{dx^2},\\
D(\Hg) = \left \{u \in H^1(\R) \cap H^2(\R\setminus\{0\} ) \st u'(0^+) -  u'(0^-) = \g u(0) \right\}.
\end{cases}
\]
Note that $H_\gamma$ differs from the usual second order derivative operator
only by the \emph{jump condition}:
\begin{equation}\label{eq:jump}
  \partial_x u(0^+)-\partial_x u(0^-)=\gamma u(0).
\end{equation}
We start by giving an explicit characterization of the linear group
$e^{-itH_\gamma}$.  Precisely, we decompose the linear group
$e^{-itH_\gamma}$ in a regular part containing  the free propagator
$e^{-itH_0}$ and a singular part $\Gamma(t)$: 
\[
e^{-itH_\gamma}=e^{-itH_0}+\Gamma(t),
\]
and we give an explicit expression for the kernel of 
$\Gamma(t)$.

As expected, the treatment of
the free linear evolution does not cause any trouble, and the
tricky part is to deal with $\Gamma(t)$. In particular, when
$\gamma<0$, the kernel of $\Gamma(t)$ is rather hard to handle, and
we have to find a clever way to decompose it into two parts that are
treatable separately (see the decomposition in Lemma
\ref{lem-decomposition-G}). This decomposition is crucially
involved in the rest of the study of the linear evolution. Whereas the
explicit formula for the kernel was previously derived in the
literature, the decomposition lemma is a new tool to deal with the
propagator $e^{-itH_\gamma}$.

With the explicit formula for the kernel of the propagator and the
decomposition lemma at our disposal, we are equipped for the study of 
the propagator $e^{-it\Hg}$.
We first prove that it defines a
continuous map on $H^1(\R)$. It is clear if $\g = 0$ since
$H_0=-\partial_{xx}$ commutes with derivatives, but it is no longer the case when
$\g \neq 0$. 
Then we extend $e^{-it\Hg}$ to a map 
on the energy space $\Ec$. This map will inherit most of the nice properties of
the unitary group on $L^2(\R)$. 
Finally, we prove that for $u_0 \in \Ec$
the map 
$t \mapsto e^{-it\Hg}  u_0 - u_0$ 
is continuous with values in
$H^1(\R)$.  In other words, it sends functions with non-zero
boundary data at infinity to localized functions. That is a central
point in our analysis.

After the study of the linear propagator, we are ready to tackle the
analysis of the Cauchy Problem.  We rewrite the problem \eqref{eq:gp} in terms of a Duhamel formula, to which we will apply Banach fixed point theorem to prove the local well-posedness.

The last step consists in proving the conservation of the energy. As usual (see
e.g. \cite{Ca03}), we first consider a dense subset of more regular initial data, for which \eqref{eq:gp} has a strong solution. However, the
singular nature of the Dirac perturbation prevents us from working with
functions regular at $0$. Thus, we have constructed the space $X_{\gamma}^2$ of
functions which have locally the $H^2(\R)$ regularity and satisfy at 0 the
jump condition \eqref{eq:jump} generated by the Dirac perturbation.
We prove the conservation of the energy for such an initial condition
$u_0$ and then, by density, for any $u_0 \in \Ec$. Global existence is
then a consequence of energy conservation.

\subsection{The Black Solitons}

When $\gamma=0$, \eqref{eq:gp} admits traveling waves, \ie solutions
of the form $\kappa_c(x-ct)$. 
In this case, a traveling wave of finite energy is either a constant
of modulus $1$ or, for $|c|<\sqrt{2}$ and up to
phase shifts or translations, it has a non-trivial profile given by an explicit
formula.

The nontrivial traveling waves have been the subject of a thorough
investigation in the recent years. When $c\neq0$, they are often
called \emph{grey solitons}, a terminology which stems from nonlinear
optics (such solitons appear grey in the experiments). For $c\neq0$, orbital stability was proved  via the
Grillakis-Shatah-Strauss Theory \cite{GrShSt87,GrShSt90} by Lin
\cite{Li02} and later revisited by Bethuel, Gravejat and Saut
\cite{BeGrSa08} via  the variational method introduced by Cazenave and
Lions \cite{CaLi82}. When $c=0$, the traveling wave becomes a
stationary wave and is now called a \emph{black soliton}. The study of
orbital stability is much trickier when $c= 0$, due to the
fact that the solution vanishes, and it is no longer possible to make
use of 
the so-called hydrodynamical formulation of the Gross-Pitaevskii
equation (see e.g. \cite{BeGrSa08} for details). Nevertheless orbital
stability of the black soliton was proved
via variational methods by Bethuel, Gravejat, Saut and Smets \cite{BeGrSaSm08} and via the inverse scattering transform by G\'erard and Zhang \cite{GeZh09} (see also \cite{DiGa07} 
for an earlier result and numerical simulations). Recently, Bethuel, Gravejat and Smets proved the orbital stability of a chain of solitons of the Gross-Pitaevskii equation  \cite{BeGrSm12-1} as well as asymptotic stability of the grey solitons \cite{BeGrSm12-2} and of the black soliton \cite{GrSm14}. Existence and stability of traveling waves with a non-zero background for equations of type \eqref{eq:gp} with a general nonlinearity was also studied by Chiron \cite{Ch12,Ch13}.

When $\gamma\neq0$, the Dirac perturbation breaks the translation
invariance and traveling waves do not exist anymore. However,
stationary solutions $u(t,x)\equiv u(x)$ solving the ordinary differential equation
\begin{equation}
  \label{SGPdelta}
  u''-\gamma\delta u+ (1-|u|^2)u=0 
\end{equation}
are still expected. In fact, the black soliton $\kappa_0$ is still a
solution to \eqref{eq:gp} when $\gamma\neq0$, and 
other branches of nontrivial solutions bifurcate from the constants of modulus $1$. 
Precisely (see Proposition \ref{prop:existence}) 
the set of  finite-energy solutions to \eqref{SGPdelta} is
\begin{align*}
  \left\{e^{i\theta}\kappa,e^{i\theta}b_\gamma\ :\  \theta\in\R\right\}&\text{ if }\gamma>0,
\\ 
\left\{e^{i\theta}\kappa,e^{i\theta}b_\gamma,e^{i\theta}\tilde
    b_\gamma\ :\  \theta\in\R\right\}&
\text{ if }\gamma<0,
\end{align*}
where (see also Figure \ref{fig:b_g})
\[
\kappa(x):=\tanh \left( \frac{x}{\sqrt{2}}\right),\quad
b_{\gamma}(x):=\tanh \left(
  \frac{|x|-c_{\gamma}}{\sqrt{2}}\right),\quad
\tilde b_{\gamma}(x):=\coth \left( \frac{|x|+c_{\gamma}}{\sqrt{2}}\right),
\]
for $c_{\gamma}:=\frac{1}{\sqrt{2}}\sinh^{-1}\left(
  -\frac{2\sqrt{2}}{\gamma} \right)$.
This existence result is obtained
using ordinary differential equations techniques. The analysis of
\eqref{SGPdelta} is classical when $\gamma=0$ and the difficulty when $\gamma\neq0$
is to deal with the jump condition \eqref{eq:jump}
induced by the Dirac perturbation.

\begin{figure}[htpb!]
\includegraphics[width = 0.45
\linewidth]{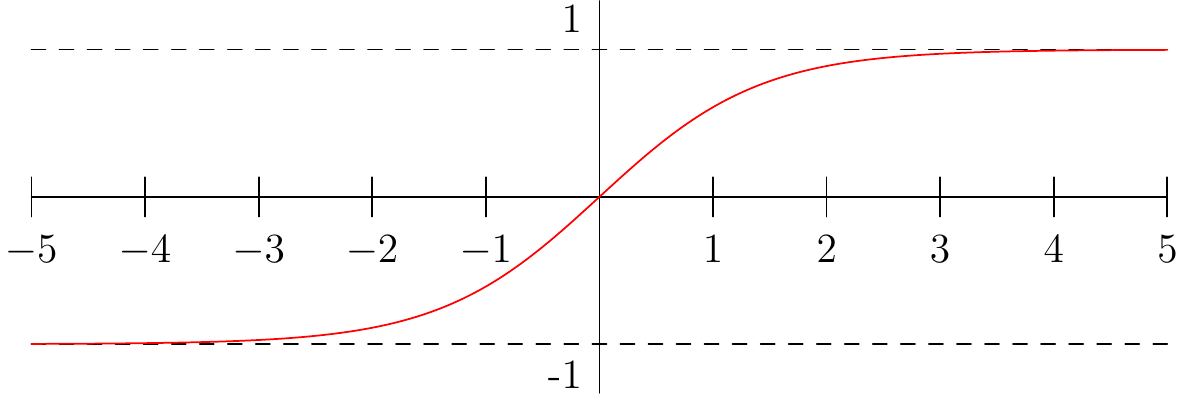}
\includegraphics[width = 0.45 \linewidth]{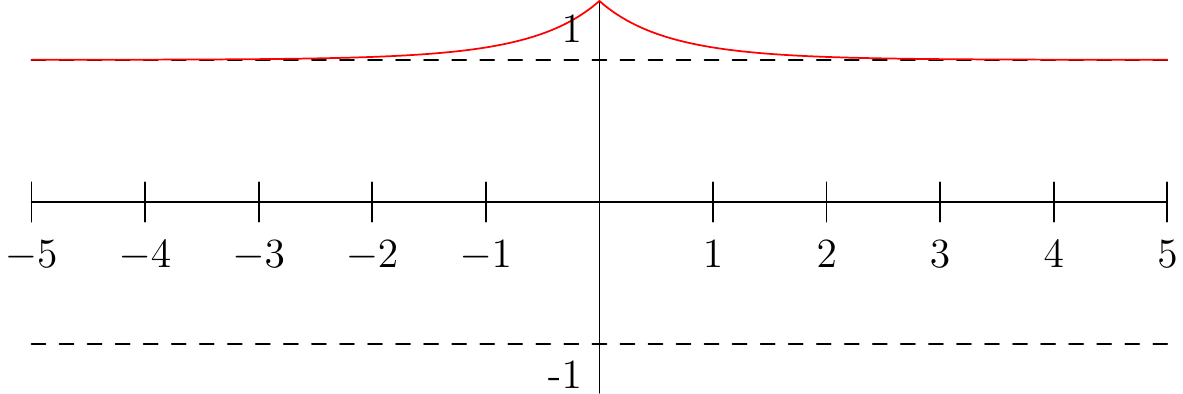}
\\
\includegraphics[width = 0.45 \linewidth]{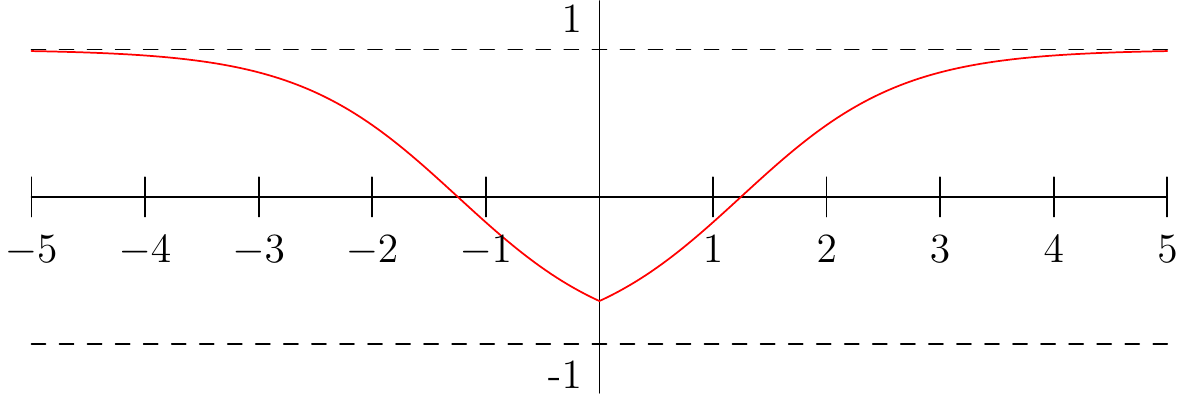}
\includegraphics[width = 0.45 \linewidth]{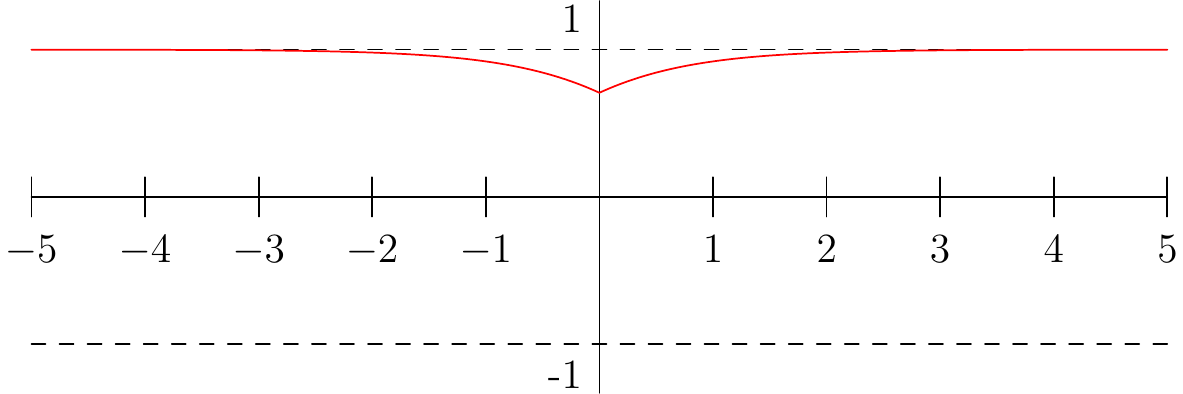}
\caption{Top {Left}: The stationary state $\kappa=\kappa_0=\tanh
  \left(\frac{x}{\sqrt{2}}\right)$. \newline
Top Right: The stationary state $\tilde b_{\gamma}(x):=\coth \left(
  \frac{|x|+c_{\gamma}}{\sqrt{2}}\right),$ $\gamma=-1$. \newline
Bottom: The stationary state $b_\gamma$ for $\gamma=-1$ and $\gamma=1$.}
\label{fig:b_g}
\end{figure}

The next step in the study of stationary solutions to \eqref{eq:gp}
is to understand their stability. To this aim, and when possible, we
give a variational characterization of the stationary solutions. When $\gamma=0$, the traveling waves can be characterized
as minimizers of the energy on a fixed momentum constraint. This is a
non-trivial result due to difficulties in the definition of the
momentum (see \cite{BeGrSa08,BeGrSaSm08}). We will show in our next
result that, depending on the sign of $\gamma$, either
$\tilde b_\gamma$ or
$b_{\gamma}$ can be characterized as minimizers. 
The
minimization problem turns out to be simpler than when
$\gamma=0$, 
and the stationary solutions are in fact global minimizers of the
energy without constraint.

The minimization result is the following.

\begin{proposition}[Variational Characterization]\label{prop:minimization}
  Let $\gamma\in\R\setminus\{ 0\}$. Then we have 
  \[
  m_{\gamma}:=\inf \{E_{\gamma}(v): v \in \Ec\} > - \infty.
  \] 
  Moreover the infimum is achieved at solutions to \eqref{SGPdelta}. Precisely, define 
  \[
  \mathcal G_\gamma:=\{ v\in\mathcal E,\quad E_\g(v)=m_\gamma \}.
  \]
  Then the following assertions hold.
  \begin{itemize}
  \item[(i)]If $\gamma >0$, then $\mathcal G_\gamma=\{ e^{i\theta}b_\gamma, \theta\in\R \}$,
  \item[(ii)] If $\gamma <0$, then $\mathcal G_\gamma=\{
    e^{i\theta}\tilde b_\gamma, \theta\in\R \}$.
  \end{itemize}
  In addition, any minimizing sequence $(u_n)\subset \mathcal E$ such that $E_\gamma(u_n)\to m_\gamma$ verifies, up to a subsequence,
  \[
  d_0(u_n,\mathcal G_\gamma)\to 0.
  \]
\end{proposition}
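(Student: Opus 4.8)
The plan is to reduce the (constraint–free) minimization of $E_\gamma$ on $\Ec$ to an elementary one–variable problem, by combining the diamagnetic inequality with a Bogomolny-type completion of squares performed separately on each half-line, and then minimizing explicitly over the value of $v$ at the origin. Concretely, for $v\in\Ec$ I would write $E_0(v)=E_0^+(v)+E_0^-(v)$, where $E_0^\pm$ is the restriction of $E_0$ to $\R^\pm$, use $\abs{(\abs v)'}\leq\abs{v'}$ to replace $v$ by $\abs v$ on each half-line, and then exploit the pointwise identity $\tfrac12 w'^2+\tfrac14(1-w^2)^2=\tfrac12\big(w'-\tfrac1{\sqrt2}(1-w^2)\big)^2+\tfrac1{\sqrt2}w'(1-w^2)$. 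Integrating the exact term on $\R^+$ and recalling that every element of $\Ec$ satisfies $\abs{v(x)}\to1$ at infinity, this gives
\[
E_0^+(\abs v)\geq \tfrac1{\sqrt2}\Big[\abs v-\tfrac13\abs v^3\Big]_0^{\infty}=\psi(a),\qquad \psi(a):=\tfrac1{3\sqrt2}(a-1)^2(a+2)\geq0,\quad a:=\abs{v(0)},
\]
with equality iff $\abs v$ solves $\abs v'=\tfrac1{\sqrt2}(1-\abs v^2)$ with $\abs v(0)=a$, i.e. $\abs v$ is the relevant $\tanh$ piece if $a<1$, the constant $1$ if $a=1$, the $\coth$ piece if $a>1$. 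Summing the two half-lines and adding $\tfrac\gamma2 a^2$ yields the master bound $E_\gamma(v)\geq g(a)$ with $g(a):=2\psi(a)+\tfrac\gamma2 a^2=\tfrac{\sqrt2}{3}(a-1)^2(a+2)+\tfrac\gamma2 a^2$.

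Next I would analyze $g$ on $[0,+\infty)$. One checks $g'(a)=\sqrt2\,a^2+\gamma a-\sqrt2$, which is exactly the jump condition $2\abs v'(0)=\gamma\abs v(0)$ for the even extension of a half-line profile; $g$ is coercive and has a single critical point on $[0,\infty)$, the positive root $a_\gamma=\tfrac1{2\sqrt2}\big(-\gamma+\sqrt{\gamma^2+8}\big)$, which is a strict global minimum, with $a_\gamma<1\iff\gamma>0$ and $a_\gamma>1\iff\gamma<0$. Hence $m_\gamma\geq g(a_\gamma)>-\infty$. The profile attaining equality in the master bound with $a=a_\gamma$ is the even function equal on $\R^+$ to the $\tanh$ piece through $a_\gamma$ when $\gamma>0$, namely $b_\gamma$, and to the $\coth$ piece through $a_\gamma$ when $\gamma<0$, namely $\tilde b_\gamma$; by Proposition~\ref{prop:existence} these lie in $\Ec$ and solve \eqref{SGPdelta}, and $E_\gamma(b_\gamma)=g(a_\gamma)=m_\gamma$ (resp. $E_\gamma(\tilde b_\gamma)=m_\gamma$), so the infimum is attained at a solution of \eqref{SGPdelta}. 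For the identification of $\mathcal G_\gamma$, if $E_\gamma(v)=m_\gamma$ then all inequalities above and $g(\abs{v(0)})\geq g(a_\gamma)$ must be equalities; strictness of the minimum of $g$ forces $\abs{v(0)}=a_\gamma$, the Bogomolny equality forces $\abs v$ to be the $\tanh$ (resp. $\coth$) piece through $a_\gamma$, i.e. $\abs v=b_\gamma$ (resp. $\tilde b_\gamma$), which never vanishes (this is where $a_\gamma<1$, resp. $a_\gamma>1$, is used), and equality in the diamagnetic inequality then forces $v=e^{i\theta}\abs v$ for a single constant $\theta$. This gives (i) and (ii).

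For the compactness statement, given $(u_n)\subset\Ec$ with $E_\gamma(u_n)\to m_\gamma$, the master bound gives $g(\abs{u_n(0)})\to g(a_\gamma)$, so by coercivity and strictness $\abs{u_n(0)}\to a_\gamma$; after multiplying $u_n$ by a constant phase I may assume $u_n(0)=\abs{u_n(0)}\to a_\gamma$. Then $E_0(u_n)\to 2\psi(a_\gamma)$ and, by the chain of inequalities, the three non-negative deficits $\int_\R(\abs{u_n'}^2-\abs{(\abs{u_n})'}^2)$ and $\big\lVert\,\abs{u_n}'-\tfrac1{\sqrt2}(1-\abs{u_n}^2)\,\big\rVert_{L^2(\R^\pm)}^2$ all tend to $0$. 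Plugging the Bogomolny deficits and $\abs{u_n}(0)\to a_\gamma$ into the linear ODE satisfied by $r_n:=\abs{u_n}-b_\gamma$ (resp. $\abs{u_n}-\tilde b_\gamma$) on each half-line — namely $r_n'+\tfrac1{\sqrt2}(\abs{u_n}+b_\gamma)r_n=G_n$ with $G_n\to0$ in $L^2$ and coefficient bounded below by $\tfrac{a_\gamma}{\sqrt2}>0$, hence a forward Grönwall/convolution estimate applies — gives $\abs{u_n}\to b_\gamma$ uniformly, $\lVert(\abs{u_n})'-b_\gamma'\rVert_{L^2}\to0$ and $\lVert\abs{u_n}^2-b_\gamma^2\rVert_{L^2}\to0$. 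Finally the diamagnetic deficit shows $u_n'-(\abs{u_n})'\,\omega_n\to0$ in $L^2$ with $\omega_n=u_n/\abs{u_n}$; since $\abs{u_n}$ is uniformly bounded below, $\omega_n'\to0$ in $L^2$ and $\omega_n(0)=1$, so $\omega_n\to1$ locally uniformly, and a tightness argument on the $L^2$-bounded sequence $(\abs{u_n})'$ upgrades this to $\lVert(\abs{u_n})'(\omega_n-1)\rVert_{L^2}\to0$. Adding up, $\lVert u_n'-b_\gamma'\rVert_{L^2}\to0$, whence $d_0(u_n,\mathcal G_\gamma)\to0$.

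The easy part is the lower bound $m_\gamma>-\infty$ and the existence of minimizers, which fall out of the Bogomolny identity once the half-line reduction is set up. The main obstacle is the compactness step: because $\Ec$ is not a vector space there is no soft weak-compactness to fall back on, so one has to turn the vanishing of the three deficits into genuine convergence for the distance $d_0$, which requires a quantitative (Grönwall) ODE-stability estimate for the modulus together with careful control of the phase of $u_n$ and of its possible zeros. The identification of $\mathcal G_\gamma$ is also somewhat delicate, since it rests on tracking equality cases through the whole chain of inequalities.
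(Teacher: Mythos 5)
Your proof is correct, but it follows a genuinely different route from the paper's. The paper runs the direct method: it takes a minimizing sequence, extracts a weak $H^1_{\mathrm{loc}}$ limit via Rellich, passes to the limit in the energy by weak lower semicontinuity and Fatou (compactness of minimizing sequences then comes essentially for free from the convergence of norms), shows the limit satisfies the Euler--Lagrange equation \eqref{SGPdelta}, and finally identifies the minimizer by invoking the full ODE classification of Proposition \ref{prop:existence} together with the explicit computation of $E_\gamma(\kappa)$, $E_\gamma(b_\gamma)$ and $E_\gamma(\tilde b_\gamma)$. You instead obtain a sharp pointwise lower bound by the diamagnetic inequality plus a Bogomolny completion of squares on each half-line, reducing the whole problem to minimizing the explicit cubic $g(a)=\tfrac{\sqrt 2}{3}(a-1)^2(a+2)+\tfrac\gamma2 a^2$ over $a=\abs{v(0)}\geq 0$; your $a_\gamma$ coincides with the paper's $\Theta_\gamma$ (resp. $\tilde\Theta_\gamma$), and one can check that $g(a_\gamma)$ equals the paper's value of $E_\gamma(b_\gamma)$ (resp. $E_\gamma(\tilde b_\gamma)$), so the two computations are consistent. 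What your approach buys is an explicit formula for $m_\gamma$, an identification of $\mathcal G_\gamma$ that does not require the full classification of stationary states (only that $b_\gamma$, $\tilde b_\gamma$ solve \eqref{SGPdelta}), and in principle quantitative stability; the price is that compactness of minimizing sequences no longer comes from soft functional analysis and must be extracted by hand from the vanishing of the three deficits, via the Gr\"onwall estimate on the modulus and the separate control of the phase --- which you do correctly, and which is indeed the delicate part. Two small points to fix in a final write-up: on $\R^-$ the completion of squares must be performed with the opposite sign (i.e. against $w'=-\tfrac1{\sqrt2}(1-w^2)$), since with the $\R^+$ sign the exact term integrates to $-\psi(a)\leq 0$ there; and for $\gamma<0$ the coefficient $\tfrac1{\sqrt2}(\abs{u_n}+\tilde b_\gamma)$ in the Gr\"onwall step is bounded below by $\tfrac1{\sqrt2}$ rather than $\tfrac{a_\gamma}{\sqrt2}$ far from the origin, which is immaterial.
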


In the cases covered by Proposition \ref{prop:minimization}, stability will be
a corollary of the variational characterization of the
stationary states as global minimizers of the energy. Let us recall
that  we say that the set $\mathcal
G_\gamma\subset \mathcal E$ is \emph{stable} if for any $\eps>0$ there exists $\delta>0$ such that  for any $u_0\in\mathcal E$ with 
\[
d_0(u_0,\mathcal G_\gamma)\leq \delta,
\]
the solution  $u$ of \eqref{eq:gp} with $u(0)=u_0$ is global and verifies
\[
\sup_{t\in\R}d_0(u(t),\mathcal G_\gamma)\leq \eps.
\]

When the stationary solutions are not minimizers of the energy, we
expect them to be all unstable. In this paper, we treat the case
$\gamma>0$ and we show that $\kappa$ is linearly unstable. 

Linear
instability means that the operator
arising in the linearization of \eqref{eq:gp} around $\kappa$ (see
e.g. \cite{ChGuNaTs07}) admits an eigenvalue with negative real part.
Precisely, consider the linearization of \eqref{eq:gp}
around the kink stationary solution $\kappa(x)$. 
For $u$ solution of \eqref{eq:gp} we write $u=\kappa+\eta$. 
The perturbation $\eta$ verifies 
\begin{equation} \label{eq-eta}
  \eta_t+L\eta+N(\eta)=0,
\end{equation}
where the linear and nonlinear parts are given by
\begin{equation}\label{eq:def-L}
  \begin{aligned}
    L\eta&=-i(\partial_{xx}\eta-\gamma\delta\eta+(1-\kappa^2)\eta-2\kappa^2\Re(\eta)),
    \\
    N(\eta)&=-i(-2\kappa\Re(\eta)\eta-|\eta|^2(\kappa+\eta)).
  \end{aligned}
\end{equation}
The kink $\kappa$ is said to be \emph{linearly unstable} if $0$ is an
unstable solution of the linear equation
\[
\eta_t+L\eta=0.
\]
This is in particular the case if 
$L$
has an eigenvalue $\lambda$ with $\Re(\lambda)<0$. Indeed, such
eigenvalue generates an exponential growth for the corresponding
solution of the linear problem. 
It is
expected that this linear exponential growth translates into nonlinear
instability, as in the theory of Grillakis, Shatah and
Strauss \cite{GrShSt90}, see \cite{GeOh12} for a rigorous proof in the
case of a nonlinear Schr\"odinger equation.

The stability/instability result is the following.

\begin{theorem}[Stability/Instability]\label{thm:instability}
  The following assertions hold.
  \begin{itemize}
  \item[(i)] \emph{(Stability)}
    Let $\gamma\neq 0$. Then the set $\mathcal G_\gamma$ is stable under the flow of \eqref{eq:gp}.
  \item[(ii)] \emph{(Instability)}
    Let $\gamma>0$. Then the kink $\kappa$ is linearly unstable.
  \end{itemize}
\end{theorem}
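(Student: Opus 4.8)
We sketch the two (independent) assertions.

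\emph{Part (i) --- stability.} The plan is to deduce stability from the variational characterization of $\mathcal G_\g$ in Proposition~\ref{prop:minimization}, together with the global well-posedness and the energy conservation of Theorem~\ref{th-pb-cauchy}, by the classical argument of Cazenave and Lions. Suppose $\mathcal G_\g$ is not stable: there are $\eps>0$, data $u_{0,n}\in\Ec$ with $d_0(u_{0,n},\mathcal G_\g)\to0$, and times $t_n$ such that the corresponding global solutions $u_n$ satisfy $d_0(u_n(t_n),\mathcal G_\g)\geq\eps$. Since $\mathcal G_\g$ is a single $d_0$-compact orbit $\{e^{i\theta}w_\g:\theta\in\R\}$ (with $w_\g=b_\g$ if $\g>0$, $w_\g=\tilde b_\g$ if $\g<0$) and $E_\g$ is $d_0$-continuous and phase invariant, $d_0(u_{0,n},\mathcal G_\g)\to0$ forces $E_\g(u_{0,n})\to m_\g$; energy conservation then gives $E_\g(u_n(t_n))=E_\g(u_{0,n})\to m_\g$, so $(u_n(t_n))$ is a minimizing sequence for $E_\g$ on $\Ec$, and the last assertion of Proposition~\ref{prop:minimization} yields $d_0(u_n(t_n),\mathcal G_\g)\to0$ along a subsequence --- a contradiction.

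\emph{Part (ii) --- linear instability of $\kappa$ for $\g>0$.} Writing $\eta=\eta_1+i\eta_2$ with real $\eta_1,\eta_2$, a direct computation from \eqref{eq:def-L} turns $\eta_t+L\eta=0$ into
\[
\partial_t\eta_1=L_-\eta_2,\qquad \partial_t\eta_2=-L_+\eta_1,
\]
where, since $\kappa=\tanh(x/\sqrt2)$ and hence $1-\kappa^2=\sech^2(x/\sqrt2)$,
\[
L_-:=\Hg-(1-\kappa^2)=\Hg-\sech^2\!\Bigl(\tfrac x{\sqrt2}\Bigr),\qquad
L_+:=\Hg-(1-3\kappa^2)=\Hg+2-3\sech^2\!\Bigl(\tfrac x{\sqrt2}\Bigr)
\]
are bounded-below selfadjoint operators on $L^2(\R)$ with $D(L_\pm)=D(\Hg)$ (so the jump condition \eqref{eq:jump} with parameter $\g$ is built into their domain). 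Seeking $\eta(t)=e^{\lambda t}(\phi_1+i\phi_2)$ reduces the problem to exhibiting $\lambda>0$ and $(\phi_1,\phi_2)\neq(0,0)$ with $L_-\phi_2=\lambda\phi_1$ and $L_+\phi_1=-\lambda\phi_2$: then $L(\phi_1+i\phi_2)=-\lambda(\phi_1+i\phi_2)$, so $L$ has the eigenvalue $-\lambda$ with $\Re(-\lambda)<0$, i.e.\ $\kappa$ is linearly unstable.

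The strategy rests on two spectral facts, to be proved from the explicit form of $L_\pm$. First, for $\g>0$, $L_+$ is positive and boundedly invertible. Indeed, differentiating the stationary equation $\kappa''+(1-\kappa^2)\kappa=0$ gives $(-\partial_{xx}+2-3\sech^2(x/\sqrt2))\kappa'=0$ with $\kappa'=\tfrac1{\sqrt2}\sech^2(x/\sqrt2)>0$; hence for $\g=0$ the bottom of the spectrum of $L_+$ is the simple eigenvalue $0$ and there is no negative spectrum, and adding the nonnegative quadratic-form term $\g\abs{v(0)}^2$ with $\g>0$ pushes this bottom strictly to the right (because $\kappa'(0)\neq0$) without creating any negative eigenvalue. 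Second, for every $\g>0$, $L_-$ has exactly one negative eigenvalue $\mu_-(\g)<0$, it is simple, and the kernel of $L_-$ on $L^2(\R)$ is trivial (its essential spectrum being $[0,\infty)$). For $\g=0$, $-\partial_{xx}-\sech^2(x/\sqrt2)$ is a P\"oschl--Teller operator with the single negative eigenvalue $-\tfrac12$ (eigenfunction $\sech(x/\sqrt2)$); adding $\g\abs{v(0)}^2$ only raises eigenvalues, so at most one survives, and that it does survive for all $\g>0$ follows from the test function $\sech(x/\sqrt2)$ when $0<\g<\sqrt2$ and, for arbitrary $\g>0$, from a shooting argument (the Weyl function of the half-line problem on $(0,\infty)$ maps $(-\tfrac12,0)$ monotonically onto $(0,\infty)$, so the jump relation selects $\mu_-(\g)\in(-\tfrac12,0)$). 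The kernel is trivial because the only distributional solutions of $(-\partial_{xx}-\sech^2(x/\sqrt2))v=0$ are $\kappa$ and a solution growing linearly, neither in $L^2$; note that $\kappa$ itself is the phase mode $i\kappa$ of the linearization (it satisfies $L_-\kappa=0$ since $\kappa(0)=0$) and sits exactly at the bottom of the essential spectrum of $L_-$.

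Granting these, $\lambda$ is produced by a continuation argument. For $s\geq0$ put $B(s):=L_-+sL_+^{-1}$, a selfadjoint operator (as $L_+^{-1}$ is bounded, selfadjoint, positive), non-decreasing in $s$, with $B(0)=L_-$ and, by comparison with constant-coefficient operators at infinity, essential spectrum $\bigl[\inf_\xi(\xi^2+\tfrac s{\xi^2+2}),\infty\bigr)\subset(0,\infty)$ for $s>0$. Then $e(s):=\inf\operatorname{spec} B(s)$ is continuous and non-decreasing, $e(0)=\mu_-(\g)<0$, and $e(s)>0$ for $s$ large: a unit vector $\phi_s$ realizing $e(s)\leq0$ would satisfy $s\langle L_+^{-1}\phi_s,\phi_s\rangle\leq\abs{\mu_-(\g)}$, hence $\langle L_+^{-1}\phi_s,\phi_s\rangle\to0$; since $L_+^{-1}\geq c(1-\partial_{xx})^{-1}$ this forces $\phi_s$ to concentrate at high frequencies, so $\langle L_-\phi_s,\phi_s\rangle\to+\infty$, contradicting $\langle L_-\phi_s,\phi_s\rangle\leq0$. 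Hence $e(s^*)=0$ for some $s^*>0$, and since $0$ is below the essential spectrum of $B(s^*)$ it is an eigenvalue; choosing $\phi_2\in\ker(L_-+s^*L_+^{-1})\setminus\{0\}$, $\lambda:=\sqrt{s^*}$, $\phi_1:=-\lambda L_+^{-1}\phi_2$, one gets $L_+\phi_1=-\lambda\phi_2$ and $L_-\phi_2=-\lambda^2L_+^{-1}\phi_2=\lambda\phi_1$, the required growing mode. The main obstacle is the second spectral fact: showing the negative eigenvalue of $L_-$ survives for \emph{all} $\g>0$ (the repulsive Dirac term pushes it up, toward the edge $0$ of the essential spectrum as $\g\to\infty$) and handling the non-$L^2$ phase mode $\kappa$ sitting precisely at that edge --- exactly the feature that makes the black-soliton spectral analysis delicate; identifying the essential spectrum of $B(s)$ and the domain bookkeeping in $L_-+sL_+^{-1}$ are routine but need care.
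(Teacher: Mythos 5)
Part (i) is correct and is exactly the paper's argument: contradiction via conservation of $E_\g$, Lemma \ref{lem-E-continue}, and the compactness of minimizing sequences from Proposition \ref{prop:minimization}.

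Part (ii) is built on the same skeleton as the paper --- the same operators $L_\pm^\g$, the same reduction to producing a real negative eigenvalue of $\mathcal L$, and the same three spectral inputs (essential spectra, positivity of $L_+^\g$ for $\g>0$ via analytic perturbation from the simple eigenvalue $0$ of $L_+^0$ with eigenfunction $\k'$, trivial kernels, and a negative eigenvalue of $L_-^\g$) --- but you replace two sub-arguments. First, to get the negative eigenvalue of $L_-^\g$ for \emph{all} $\g>0$, you use $\sech(x/\sqrt2)$ only for $\g<\sqrt2$ and then invoke a Weyl-function/shooting argument, whose crucial claim (that the half-line $m$-function tends to $+\infty$ at the spectral edge $\m=0$, because the principal zero-energy solution $\k$ vanishes at the origin) is asserted rather than proved and is essentially equivalent to the statement you want; the paper instead exhibits, for every $\g>0$, the explicit trial function $v_r=\k(\abs x)\h_r+\tfrac{\sqrt2}{\g}\sech(x/\sqrt2)$, engineered to satisfy the jump condition exactly, and computes $\limsup_r q_-^\g(v_r)<0$ directly --- a cleaner and fully self-contained route you should adopt or at least be aware of. Second, to convert ``$L_+>0$ and $\inf\s(L_-)<0$'' into an eigenvalue $-\l$ of $\mathcal L$, you run a continuation argument on $B(s)=L_-^\g+s(L_+^\g)^{-1}$, locating $s^*$ with $0=\inf\s(B(s^*))$ strictly below $\s_{\mathrm{ess}}(B(s^*))$; the paper instead works with the symmetrization $\L=(L_+^\g)^{1/2}L_-^\g(L_+^\g)^{1/2}$, shows $\s_{\mathrm{ess}}(\L)\subset[0,+\infty)$ by splitting off the finite-rank negative spectral projection of $L_-^\g$, and tests $\L$ against $(L_+^\g)^{-1/2}\x$ with $\x$ the negative eigenfunction of $L_-^\g$. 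Both constructions are valid and yield the same mode ($\l=\sqrt{s^*}$ versus $\L w=-\l^2w$); yours avoids fractional powers at the price of identifying $\s_{\mathrm{ess}}(B(s))$ for a family of non-relatively-compact perturbations, while the paper's is a single fixed operator. Note also that you claim $L_-^\g$ has \emph{exactly} one simple negative eigenvalue; only ``at least one'' is needed, and that is all the paper proves.
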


\begin{remark}
  In the stability result, a solution starting close to a stationary
  wave will always remain close, \emph{up to a phase parameter which may
    vary in time}.
  This type of stability is usually called \emph{orbital
    stability} (see \cite{CaLi82} for an early result on orbital
  stability and \cite[Chapter 8]{Ca03} for a discussion of the orbital
  nature of stability). 
\end{remark}

\begin{remark}
  It is interesting to note that if we define $m_{\gamma,\mathrm{rad}}$ (resp. $m_{\gamma,\mathrm{odd}}$) to be the minimum of $E_{\gamma}(v)$  with $v\in \Ec$, $v$ even (resp. odd), then for any $\gamma\in\R$ we have
  \begin{align*}
\mathcal G_{\gamma,\mathrm{rad}}&
                                                 =\{v\in\mathcal
                                                 E,v\text{ even},E_\gamma(v)=m_{\gamma,\mathrm{rad}}\}=\{
                                                e^{i\theta}b_\gamma,
                                                \theta\in\R
                                                \},\quad\text{if }\gamma>0\\
\mathcal G_{\gamma,\mathrm{rad}}&
                                                 =\{v\in\mathcal
                                                 E,v\text{
                                                 even},E_\gamma(v)=m_{\gamma,\mathrm{rad}}\}=\{
                                                 e^{i\theta}\tilde b_\gamma, \theta\in\R \},\quad\text{if }\gamma<0\\
  \mathcal G_{\gamma,\mathrm{odd}}&=\{v\in\mathcal E,v\text{ odd},E_\gamma(v)=m_{\gamma,\mathrm{odd}}\}=\{ e^{i\theta}\kappa, \theta\in\R \}.
  \end{align*}
In
  particular, the kink $\kappa$ is always stable with
  respect to odd perturbations.
\end{remark}

\begin{remark}
Perturbations
measured in $d_0$ allow for overall phase changes. It is an
interesting question whether a single minimizer can be stable against a class of more
localized perturbations (without phase change)? In \cite{GeZh09}, orbital
stability of the kink was obtained without phase change, for a class of
polynomially decreasing perturbations. However, their method does not
apply in our setting. 
\end{remark}

As already said, part (i) (Stability) in  Theorem \ref{thm:instability} is a
corollary of Proposition \ref{prop:minimization}.
The proof of part (ii) (Instability) of Theorem \ref{thm:instability} relies on a perturbative
analysis partly inspired by \cite{LeFuFiKsSi08}. We first convert $L$ into a new operator $\mathcal L$ by
separating the real and imaginary parts. The operator $\mathcal L$ is
of the form 
\[
\mathcal{L}=\begin{pmatrix}0 & -L_-^\gamma\\L^\gamma_+&0\end{pmatrix},
\]
where $L_+^\gamma$ and $L_-^\gamma$ are selfadjoint operators whose
spectra are well known when $\gamma=0$. Then we use the continuity of these spectra with respect to $\g$ to obtain information on the general case. For instance, $0$ is a simple and isolated eigenvalue of $L_+^0$. For
$\gamma\neq0$, $|\gamma|\ll1$, this eigenvalue moves on one side or the other of the real line, depending on the sign
of $\gamma$. Then we show that
when $\gamma\neq0$ the kernel is always trivial, which implies that the number of negative eigenvalues is constant for $\gamma \in (-\infty,0)$ and for $\g \in (0,+\infty)$. With this kind of information on the spectrum of $L_\pm^\gamma$, we
can prove that $\mathcal{L}$ has a real negative eigenvalue, which is also an eigenvalue for $L$.

The rest of the paper is divided as follows. In Section
\ref{sec:spaces} we analyse the structure of the functional spaces
involved in the analysis, in particular the energy space\;$\Ec$. Section
\ref{sec:propagator} is devoted to the study of the linear propagator. This provides the necessary tools to prove the
well-posedness of the Cauchy Problem in Section
\ref{sec:cauchy}. In Section \ref{sec:existence} we prove the
existence and variational characterizations of the stationary solutions, and we analyse their
stability/instability in Section \ref{sec:stability}.

\begin{acknowledgment}
  The authors are grateful to Masahito Ohta for pointing them out the
  existence of the family of $\coth$ based bound states. They are also
  grateful to the unknown referee for helpful comments.
\end{acknowledgment}

\begin{notation}
  The space $L^2(\R)$ will be endowed with the real scalar product 
  \[
  \innp{u}{v}_{L^2}=\Re\int_{\R}u\bar vdx.
  \]
  The homogeneous Sobolev space $\dot H^1(\R)$ is defined by
  \[
  \dot H^1(\R) = \{u \in H^1_{\mathrm{loc}}(\R) \st u' \in L^2 \}.
  \]
  Given an operator $L$, we denote by $L^*$ its adjoint. As usual, $\Sc=\Sc(\R)$
  will denote the Schwartz space of  rapidly decreasing functions. We
  denote by
  $ C^{\infty}_0(\R)$  
  the set of $ C^{\infty}$ functions from $\R$ to $\C$  with compact
  support.
  For $x\in\R$, we use the Japanese bracket to denote
  \[
  \pppg x=\sqrt{1+|x|^2}.
  \]
\end{notation}

\section{Functional Spaces}
\label{sec:spaces}

\subsection{The Energy Space}

In this section we give the basic properties of the energy space $\Ec$.
Some of the properties presented are already known (see
e.g. \cite{Ge08}) but we give here statements adapted to our
needs. 

Recall that we have defined two distances $d_0$, $d_\infty$ in
\eqref{eq:d_0}-\eqref{eq:d_infty} and $\abs
\cdot_{\mathcal E}$ in \eqref{def-abs-Ec}. This endow $\mathcal E$ with structures of complete metric spaces.
We will use $ d_\infty $ to measure the continuity of the flow in
Theorem \ref{th-pb-cauchy}, while $ d_0$ will be useful for the
stability result.

It is clear that for $u, v \in \Ec$ we have $d_0(u,v) \leq d_\infty(u,v)$. On the other hand, $d_0$ does not control $d_\infty$. Indeed, if for $n \in \N\setminus\{0\}$ and $x \in \R$ we set 
\[
u_n(x) = 1 \quad \text{and} \quad v_n(x) = e^{i\pi \f_n(x)} \quad \text{with} \quad \f_n(x) = \frac {\abs x}{n+ \abs x},
\]
then we have 
\[
d_0 (u_n , v_n)^2 = \nr{v'_n}_{L^2}^2 = \pi^2 \nr{\f_n'}_{L^2}^2 
\limt n {+ \infty} 0
\]
but for all $n$
\[
d_\infty (u_n,v_n) \geq \nr{u_n-v_n}_{L^{\infty}} = 2.
\]

We start by showing that functions in the energy space are in fact
continuous, bounded and with modulus $1$ at infinity. Moreover the quantities $E_\g(u)$ and $\abs u_\Ec$ are comparable. We will see that the first one is preserved for a solution of \eqref{eq:gp} and the second will give the time of existence for the local well-posedness, so the following result will be crucial to obtain global well-posedness.

\begin{lemma} \label{lem-abs-Ec}
Let $u\in \Ec$. Then $u$ is uniformly continuous, bounded and
\[
\lim_{|x|\to+\infty}|u(x)|=1.
\]
Moreover there exists $C >0$ such that for every $u \in \Ec$ we have 
\begin{gather} 
\nr{u}_{L^{\infty}} \leq C (1+ \abs{u}_\Ec^{2/3}), \label{estim-Linfinity-Ec}
\\
\abs {u}_\Ec^{4/3} - C \leq E_\g (u) \leq C (1 + \abs u_\Ec^2). \label{Egamma-absEc}
\end{gather}
\end{lemma}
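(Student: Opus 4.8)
The plan is to establish the three claims in turn, starting with the qualitative properties and then deriving the quantitative bounds. First I would show that any $u \in \Ec$ is uniformly continuous and bounded: since $u' \in L^2(\R)$, for $x \leq y$ we have $\abs{u(y)-u(x)} \leq \nr{u'}_{L^2}\abs{y-x}^{1/2}$ by Cauchy--Schwarz, which gives uniform continuity (in fact $1/2$-Hölder continuity); boundedness and the limit $\abs{u(x)} \to 1$ as $\abs x \to \infty$ will follow from the integrability of $1-\abs u^2$ together with this modulus of continuity. More precisely, if $\abs u$ did not tend to $1$ at $+\infty$ there would be a sequence $x_n \to +\infty$ with $\abs{\,\abs{u(x_n)}^2 - 1\,} \geq \eta > 0$; the uniform continuity of $\abs u^2$ (which follows from that of $u$ and the boundedness, once boundedness is known — so one first gets boundedness on bounded sets and bootstraps) forces $\abs{\,\abs u^2 - 1\,} \geq \eta/2$ on intervals of fixed length around each $x_n$, contradicting $(1-\abs u^2) \in L^2(\R)$. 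Boundedness then follows because $\abs u^2$ is close to $1$ outside a compact set and continuous (hence bounded) on that compact set; alternatively it is subsumed by the explicit estimate \eqref{estim-Linfinity-Ec} proved next.

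For \eqref{estim-Linfinity-Ec}, the idea is a Gagliardo--Nirenberg-type interpolation adapted to the non-vanishing boundary condition. I would pick a point $x_0$ where $\abs{u(x_0)}$ is close to $1$, say $\abs{\,\abs{u(x_0)}^2-1\,} \leq 1$ (such a point exists by the limit just established, in fact for all large $\abs{x_0}$), so $\abs{u(x_0)} \leq \sqrt 2$. Then for any $x$, writing $\abs{u(x)}^2 = \abs{u(x_0)}^2 + \int_{x_0}^x \frac{d}{ds}\abs{u(s)}^2\,ds$ and bounding $\bigl|\frac{d}{ds}\abs u^2\bigr| = \abs{2\Re(\bar u u')} \leq 2\nr u_{L^\infty}\abs{u'(s)}$, one gets, after taking the supremum over $x$ on an interval of length $\ell$ around $x_0$ and using Cauchy--Schwarz on $\int \abs{u'}$, an inequality of the form $\nr u_{L^\infty}^2 \leq 2 + 2\nr u_{L^\infty}\nr{u'}_{L^2}\,\ell^{1/2}$ on that interval, while outside it one controls $u$ directly using that $\abs u^2$ stays near $1$ there. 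The standard trick is to split $\R$ into the region where $\bigl|\abs u^2 - 1\bigr| \leq \tfrac12$ and its complement $\Omega$: on $\Omega$ one has $\abs u \leq \sqrt{3/2}$ is false in general, so instead use that $\abs\Omega \leq 4\nr{1-\abs u^2}_{L^2}^2 \lesssim \abs u_\Ec^2$ by Chebyshev, pick $x_0 \notin \Omega$ at distance $\leq \abs\Omega$ from any given $x$, and run the fundamental-theorem-of-calculus estimate over that short interval to obtain $\nr u_{L^\infty}^2 \lesssim 1 + \nr u_{L^\infty}\nr{u'}_{L^2}\abs u_\Ec^{1/2} \lesssim 1 + \nr u_{L^\infty}\,\abs u_\Ec^{1 + 1/2}$ — wait, more carefully $\nr{u'}_{L^2} \lesssim \abs u_\Ec$ and $\abs\Omega^{1/2} \lesssim \abs u_\Ec$, giving $\nr u_{L^\infty}^2 \lesssim 1 + \nr u_{L^\infty}\abs u_\Ec^{2}$, hence $\nr u_{L^\infty} \lesssim 1 + \abs u_\Ec^{2}$, which is weaker than claimed. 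To recover the sharp exponent $2/3$ one must instead choose the length $\ell$ of the interval optimally (Young's inequality), balancing $\nr{u'}_{L^2}\ell^{1/2}$ against the $L^2$-mass of $1-\abs u^2$ gained from that interval; the correct bookkeeping $\nr u_{L^\infty}^2 \lesssim 1 + \nr u_{L^\infty}^{2/3}\abs u_\Ec^{?}$ then absorbs into $\nr u_{L^\infty} \lesssim 1 + \abs u_\Ec^{2/3}$. I expect this optimization — getting the exponent $2/3$ rather than a cruder power — to be the main technical obstacle, and the cleanest route is the classical one-dimensional inequality $\nr{f}_{L^\infty}^3 \lesssim \nr f_{L^\infty}^? \cdots$; concretely, apply the known estimate $\nr{g}_{L^\infty}^2 \lesssim \nr{g}_{L^2}\nr{g'}_{L^2} + \nr g_{L^2}^2$ to $g = 1 - \abs u^2 \in L^2$ together with $g' = -2\Re(\bar u u')$, $\nr{g'}_{L^2} \leq 2\nr u_{L^\infty}\nr{u'}_{L^2}$, yielding $\nr{1-\abs u^2}_{L^\infty}^2 \lesssim \nr{1-\abs u^2}_{L^2}\bigl(\nr u_{L^\infty}\nr{u'}_{L^2} + \nr{1-\abs u^2}_{L^2}\bigr) \lesssim \abs u_\Ec^2 \nr u_{L^\infty} + \abs u_\Ec^4$; since $\nr u_{L^\infty}^2 \leq 1 + \nr{1-\abs u^2}_{L^\infty}$, substituting gives $\nr u_{L^\infty}^4 \lesssim 1 + \abs u_\Ec^2 \nr u_{L^\infty} + \abs u_\Ec^4$, and a Young inequality $\abs u_\Ec^2 \nr u_{L^\infty} \leq \tfrac12 \nr u_{L^\infty}^4 + C\abs u_\Ec^{8/3}$ absorbs the middle term, leaving $\nr u_{L^\infty}^4 \lesssim 1 + \abs u_\Ec^{8/3}$, i.e. \eqref{estim-Linfinity-Ec}.

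Finally, for \eqref{Egamma-absEc} the upper bound is immediate: $E_\g(u) = \abs u_\Ec^2 + \tfrac\g2 \abs{u(0)}^2 \leq \abs u_\Ec^2 + \tfrac{\abs\g}2\nr u_{L^\infty}^2 \leq \abs u_\Ec^2 + C(1+\abs u_\Ec^{4/3}) \leq C(1+\abs u_\Ec^2)$ using \eqref{estim-Linfinity-Ec}. For the lower bound, $E_\g(u) \geq \abs u_\Ec^2 - \tfrac{\abs\g}2\nr u_{L^\infty}^2 \geq \abs u_\Ec^2 - C(1 + \abs u_\Ec^{4/3})$, and since $\abs u_\Ec^{4/3} = o(\abs u_\Ec^2)$ as $\abs u_\Ec \to \infty$, a Young inequality $C\abs u_\Ec^{4/3} \leq \tfrac12\abs u_\Ec^2 + C'$ — applied only in the regime $\abs u_\Ec \geq 1$, and handling $\abs u_\Ec \leq 1$ trivially — gives $E_\g(u) \geq \tfrac12\abs u_\Ec^2 - C' \geq \abs u_\Ec^{4/3} - C''$, which is the stated inequality. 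All constants depend only on $\g$ (indeed only on $\abs\g$), as required.
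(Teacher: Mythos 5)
Your final argument for \eqref{estim-Linfinity-Ec} --- applying the one-dimensional interpolation inequality to $v = 1-\abs u^2$, using $\nr{v'}_{L^2} \leq 2\nr{u}_{L^\infty}\nr{u'}_{L^2}$, and then feeding $\nr{u}_{L^\infty}^2 \leq 1 + \nr{v}_{L^\infty}$ back in --- is exactly the paper's proof, and your treatment of the qualitative statements and of \eqref{Egamma-absEc} also matches the paper. The long preliminary detour via Chebyshev and interval-splitting, which you rightly abandon as giving only a cruder power, is not needed.

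There is, however, one quantitative slip in the decisive step that, as written, breaks the argument. You estimate $\nr{1-\abs u^2}_{L^2}^2 \lesssim \abs u_\Ec^4$ and then assert that $\nr u_{L^\infty}^4 \lesssim 1 + \abs u_\Ec^2\nr u_{L^\infty} + \abs u_\Ec^4$ yields $\nr u_{L^\infty}^4 \lesssim 1 + \abs u_\Ec^{8/3}$ after Young; but the term $\abs u_\Ec^4$ cannot be absorbed into $1 + \abs u_\Ec^{8/3}$, so this chain only gives $\nr u_{L^\infty} \lesssim 1 + \abs u_\Ec$. That weaker bound is not enough for the lower bound in \eqref{Egamma-absEc}: it would give $E_\g(u) \geq \abs u_\Ec^2 - C\abs\g(1+\abs u_\Ec^2)$, which is useless when $\abs\g$ is large. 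The fix is immediate: by \eqref{def-abs-Ec} one has $\nr{1-\abs u^2}_{L^2} \leq 2\abs u_\Ec$, hence $\nr{1-\abs u^2}_{L^2}^2 \leq 4\abs u_\Ec^2$ (not $\abs u_\Ec^4$), and the corrected inequality $\nr u_{L^\infty}^4 \lesssim 1 + \abs u_\Ec^2\nr u_{L^\infty} + \abs u_\Ec^2$ does give \eqref{estim-Linfinity-Ec}, since $\abs u_\Ec^2 \leq 1 + \abs u_\Ec^{8/3}$. Cleaner still, for $v \in H^1(\R)$ the sharp one-dimensional inequality $\nr v_{L^\infty}^2 \leq 2\nr v_{L^2}\nr{v'}_{L^2}$ holds with no lower-order term --- this is what the paper uses --- so the spurious term never appears.
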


\begin{proof} 
  Let $u \in \Ec$. Since $u' \in L^2(\R)$, $u$ is uniformly continuous. Assume by contradiction that there exist $\e > 0$ and a sequence $(x_n)$ such that 
  \[
  \lim_{n\to+\infty}|x_n|=+\infty   \quad \text{and} \quad \big|1-|u(x_n)|^2\big|>\varepsilon. 
  \]
  By uniform continuity there exists $\delta>0$ such that for $n\in \N$ and $x\in[x_n-\delta,x_n+\delta]$ we have 
  \begin{equation*}
    \big|1-|u(x)|^2\big|>\frac\varepsilon 2.
  \end{equation*}
  On the other hand, since $(1-\abs u^2) \in L^2(\R)$, we have
  \[
  \lim_{n\to+\infty}\int_{x_n-\delta}^{x_n+\delta}(1-|u|^2)^2dx=0.
  \]
  This gives a contradiction, and hence $|u(x)| \to 1$ as $\abs x \to \infty$. Since $u$ is continuous, we deduce that it is bounded. Now let $v= 1- \abs u^2$. Then $v$ belongs to $H^1(\R)$ and we have
  \[
  \nr{v}_{L^\infty}^2 \lesssim \nr{v}_{L^2} \nr{v'}_{L^2} \lesssim \abs{u}_\Ec^2 \nr{u}_{L^\infty}. 
  \]
  This gives
  \[
  \nr{u}_{L^\infty}^4 \lesssim 1 + \nr{v}_{L^\infty}^2 \lesssim 1 +  \abs{u}_\Ec^2 \nr{u}_{L^\infty},
  \]
and \eqref{estim-Linfinity-Ec} follows. We easily deduce the second inequality of \eqref{Egamma-absEc}. The first one is clear for $\g \geq 0$. When $\g < 0$ we write for some $\tilde C > 0$
\[
E_\g (u) \geq \abs u_\Ec^2 - \frac {\abs \g} 2 \nr{u}_{L^\infty(\R)}^2 \geq \abs{u}^{4/3}_\Ec \left(\abs u_\Ec^{2/3} - \tilde C \right) - \tilde C .
\]
This concludes the proof.
\end{proof}

\begin{lemma}[Continuity of the Energy] \label{lem-E-continue}
  Let $\g \in \R$. Then the energy $E_\g$ is continuous on $(\Ec, d_0)$, and hence on $(\Ec, d_\infty )$. More precisely, for $R > 0$ the functional $E_\g$ is Lipschitz continuous on $\{ u \in \Ec \st \abs u _\Ec \leq R\}$.
\end{lemma}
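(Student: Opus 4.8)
The plan is to prove the quantitative (Lipschitz) statement on each ball $\{u\in\Ec \st \abs u_\Ec\le R\}$ by a direct estimate, splitting $E_\g$ into its three summands and bounding each difference by $C(R,\g)\,d_0(u,v)$; continuity on all of $(\Ec,d_0)$ then follows because every $d_0$-convergent sequence is $\abs\cdot_\Ec$-bounded, and continuity for $d_\infty$ is immediate from $d_0\le d_\infty$.

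First I would record the two a priori bounds coming from the definition \eqref{def-abs-Ec}, namely $\nr{u'}_{L^2}\le\sqrt2\,\abs u_\Ec$ and $\nr{1-\abs u^2}_{L^2}\le 2\abs u_\Ec$; combined with \eqref{estim-Linfinity-Ec} from Lemma \ref{lem-abs-Ec}, this gives on the ball of radius $R$ a bound $\nr u_{L^\infty}\le C(R)$, in particular $\abs{u(0)}\le C(R)$. Next, for $u,v$ in that ball, the gradient term is handled by the elementary factorization of a difference of squares: $\bigl|\nr{u'}_{L^2}^2-\nr{v'}_{L^2}^2\bigr|\le\nr{u'-v'}_{L^2}\bigl(\nr{u'}_{L^2}+\nr{v'}_{L^2}\bigr)\le C(R)\,d_0(u,v)$. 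For the potential term, set $a=1-\abs u^2$, $b=1-\abs v^2$, so that $a-b=\abs v^2-\abs u^2$ and hence $\nr{a-b}_{L^2}=\nr{\abs u^2-\abs v^2}_{L^2}$ is exactly the third piece of $d_0$; then $\bigl|\int_\R a^2-\int_\R b^2\bigr|=\bigl|\int_\R(a-b)(a+b)\bigr|\le\nr{a-b}_{L^2}\bigl(\nr a_{L^2}+\nr b_{L^2}\bigr)\le 4R\,\nr{\abs u^2-\abs v^2}_{L^2}$ by Cauchy--Schwarz and the a priori bound on $\nr{1-\abs u^2}_{L^2}$. For the Dirac term, again a difference of squares: $\bigl|\abs{u(0)}^2-\abs{v(0)}^2\bigr|\le\abs{u(0)-v(0)}\bigl(\abs{u(0)}+\abs{v(0)}\bigr)\le C(R)\,\abs{u(0)-v(0)}$, using the $L^\infty$ control above. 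Summing the three contributions yields $\abs{E_\g(u)-E_\g(v)}\le C(R,\g)\,d_0(u,v)$, which is the claimed Lipschitz property.

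Finally, for global continuity on $(\Ec,d_0)$: if $u_n\to u$ for $d_0$, then $\nr{u_n'}_{L^2}\le\nr{u'}_{L^2}+d_0(u_n,u)$ and $\nr{1-\abs{u_n}^2}_{L^2}\le\nr{1-\abs u^2}_{L^2}+d_0(u_n,u)$ are bounded, so $(u_n)$ lies in a fixed ball $\abs\cdot_\Ec\le R$ together with $u$, and the Lipschitz estimate gives $E_\g(u_n)\to E_\g(u)$. Continuity for $d_\infty$ follows from $d_0\le d_\infty$. I do not expect any genuine obstacle here: the whole argument reduces to the identity $s^2-t^2=(s-t)(s+t)$ applied pointwise and in $L^2$, together with Cauchy--Schwarz; the only nontrivial input is the uniform $L^\infty$ (hence pointwise at $0$) bound of Lemma \ref{lem-abs-Ec}, which is what makes the Dirac term behave, and the remark that $d_0$-boundedness forces $\abs\cdot_\Ec$-boundedness so that the local Lipschitz bound upgrades to global continuity.
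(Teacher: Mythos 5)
Your proof is correct and follows essentially the same route as the paper: split $E_\g$ into its three terms, apply the difference-of-squares factorization (the paper's inequality \eqref{eq-A2-B2}) together with Cauchy--Schwarz, and control $\abs{u(0)}$ via the $L^\infty$ bound \eqref{estim-Linfinity-Ec} of Lemma \ref{lem-abs-Ec}. The only cosmetic difference is that you make the dependence on $R$ fully explicit and spell out the passage from the local Lipschitz bound to global continuity, which the paper leaves implicit.
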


\begin{proof}
  For $A,B \in \C$ we have
  \begin{equation} \label{eq-A2-B2}
    \begin{aligned}
      \abs{\abs A ^2 - \abs B ^2} = \abs{\Re \big( (A-B)(\overline A+ \overline B) \big)}  \leq \abs {A-B} \big( 2 \abs A + \abs {A-B} \big).
    \end{aligned}
  \end{equation}
  Thus for $u,v \in \Ec$ we have
  \begin{eqnarray*}
    \lefteqn{\abs{E_\g(u) - E_\g(v)}}\\
 && \lesssim \int_\R \abs*{\abs{u'}^2 - \abs{v'}^2} + \abs*{\abs{u(0)}^2 - \abs{v(0)}^2 } + \int_{\R} \abs*{ \abs{1 - \abs u^2}^2 -  \abs{1 - \abs v^2}^2 } \\
 && \lesssim    d_0 (u,v) \big( 1+\abs u _\Ec +  d_0 (u,v) \big).
  \end{eqnarray*}
Notice that we have used \eqref{estim-Linfinity-Ec} to control $\abs{u(0)}$. The result follows.
\end{proof}

We now look at the perturbation of a function $u$ in $\Ec$ by a function in $H^1(\R)$. This will be used to apply the fixed point theorem in the proof of the local well-posedness.

\begin{lemma} \label{lem-Ec-H1}
  The following assertions hold.
  \begin{enumerate} [(i)]
  \item If $u \in \Ec$ and $w \in H^1(\R)$ then $u+w \in \Ec$. 
  \item There exists $C > 0$ such that for $u \in \Ec$ and $w \in H^1(\R)$ we have 
    \[
    \abs{u+w}_\Ec \leq C \left( 1 + \abs u _\Ec \right) \left( 1 + \nr {w}_{H^1}^2 \right). 
    \]
  \item Let $R > 0$. There exists $C_R > 0$ such that for $u_1,u_2 \in \Ec$ and $w_1,w_2 \in H^1(\R)$ with $\max \big( \abs {u_1}_\Ec , \abs{u_2}_\Ec , \nr{w_1}_{H^1} , \nr{w_2}_{H^1} \big) \leq R$ we have
    \[
    d_\infty (u_1 + w_1, u_2 + w_2) \leq C_R  \left( d_\infty  (u_1,u_2) + \nr{w_1 - w_2}_{H^1} \right).
    \]
  \end{enumerate}
\end{lemma}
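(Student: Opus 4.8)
The plan is to reduce all three assertions to two elementary facts: the one–dimensional Sobolev embeddings $H^1(\R) \hookrightarrow L^\infty(\R) \cap L^4(\R)$ (the latter via Gagliardo–Nirenberg, $\nr{w}_{L^4}^2 \lesssim \nr{w}_{L^2}\nr{w'}_{L^2} \lesssim \nr{w}_{H^1}^2$), and the bounds of Lemma~\ref{lem-abs-Ec}, in particular $\nr{u}_{L^\infty} \leq C(1+\abs u_\Ec^{2/3})$. For assertion~(i), I would observe that $(u+w)' = u'+w' \in L^2$ trivially, and expand
\[
1 - \abs{u+w}^2 = \big(1 - \abs u^2\big) - 2\Re(u \bar w) - \abs w^2 .
\]
Here $1-\abs u^2 \in L^2$ by definition of $\Ec$; $u\bar w \in L^2$ because $u$ is bounded (Lemma~\ref{lem-abs-Ec}) and $w \in L^2$; and $\abs w^2 \in L^2$ because $w \in L^4$. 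Hence $1-\abs{u+w}^2 \in L^2$ and $u+w \in \Ec$.

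For assertion~(ii) I would estimate $\abs{u+w}_\Ec^2 = E_0(u+w)$ term by term. The kinetic part gives $\nr{u'+w'}_{L^2}^2 \lesssim \abs u_\Ec^2 + \nr{w}_{H^1}^2$. For the potential part I use the expansion above, so that
\[
\nr{1 - \abs{u+w}^2}_{L^2} \lesssim \nr{1-\abs u^2}_{L^2} + \nr u_{L^\infty}\nr w_{L^2} + \nr w_{L^4}^2 \lesssim \abs u_\Ec^2 + (1+\abs u_\Ec^{2/3})\nr w_{H^1} + \nr w_{H^1}^2 ,
\]
using \eqref{estim-Linfinity-Ec} for $\nr u_{L^\infty}$ and Gagliardo–Nirenberg for $\nr w_{L^4}$. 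Squaring, collecting terms and crudely bounding $\abs u_\Ec^{4/3} \leq 1 + \abs u_\Ec^2$, every resulting monomial is dominated by $(1+\abs u_\Ec)^2(1+\nr w_{H^1}^2)^2$; taking square roots yields the claimed inequality.

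For assertion~(iii), set $A_j = u_j + w_j$. The first two summands of $d_\infty$ are straightforward: $\nr{A_1' - A_2'}_{L^2} \leq d_\infty(u_1,u_2) + \nr{w_1-w_2}_{H^1}$ and, by $H^1(\R)\hookrightarrow L^\infty(\R)$, $\nr{A_1 - A_2}_{L^\infty} \lesssim d_\infty(u_1,u_2) + \nr{w_1-w_2}_{H^1}$. The delicate summand is $\nr{\abs{A_1}^2 - \abs{A_2}^2}_{L^2}$: one cannot write $\abs{A_1}^2 - \abs{A_2}^2 = \Re\big((A_1-A_2)(\overline{A_1}+\overline{A_2})\big)$ and estimate $\nr{\overline{A_1}+\overline{A_2}}_{L^2}$, since $A_1+A_2$ has modulus $\approx 2$ at infinity and is not in $L^2$. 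Instead I would expand
\[
\abs{A_1}^2 - \abs{A_2}^2 = \big(\abs{u_1}^2 - \abs{u_2}^2\big) + 2\Re\big(u_1\bar w_1 - u_2\bar w_2\big) + \big(\abs{w_1}^2 - \abs{w_2}^2\big),
\]
bound the first term directly by $d_\infty(u_1,u_2)$, write $u_1\bar w_1 - u_2\bar w_2 = u_1(\bar w_1 - \bar w_2) + (u_1-u_2)\bar w_2$ and estimate it in $L^2$ by $\nr{u_1}_{L^\infty}\nr{w_1-w_2}_{L^2} + \nr{u_1-u_2}_{L^\infty}\nr{w_2}_{L^2}$, and treat $\abs{w_1}^2 - \abs{w_2}^2 = \Re\big((w_1-w_2)(\overline{w_1}+\overline{w_2})\big)$ by $\nr{w_1-w_2}_{L^\infty}(\nr{w_1}_{L^2}+\nr{w_2}_{L^2})$. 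Under the a priori bound by $R$ on $\abs{u_j}_\Ec$ and $\nr{w_j}_{H^1}$, Lemma~\ref{lem-abs-Ec} makes $\nr{u_j}_{L^\infty}$, $\nr{w_j}_{L^2}$ all $\lesssim_R 1$, and everything collapses to $C_R\big(d_\infty(u_1,u_2) + \nr{w_1-w_2}_{H^1}\big)$. The only real obstacle is this bookkeeping in (iii), namely recognizing that one must peel off the $\abs{u_1}^2-\abs{u_2}^2$ piece before trying to factor, because the background at infinity obstructs the naive $L^2$ factorization.
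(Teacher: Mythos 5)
Your proof follows essentially the same route as the paper's: the expansion $1-\abs{u+w}^2=(1-\abs u^2)-2\Re(u\bar w)-\abs w^2$ for (i)--(ii), and for (iii) exactly the paper's splitting into $\abs{u_1}^2-\abs{u_2}^2$, $u_1(\bar w_1-\bar w_2)$, $(u_1-u_2)\bar w_2$ and $\abs{w_1}^2-\abs{w_2}^2$, combined with \eqref{estim-Linfinity-Ec} and the embedding $H^1(\R)\hookrightarrow L^\infty(\R)$ (the paper bounds $\nr{w^2}_{L^2}$ by $\nr{w}_{L^\infty}\nr{w}_{L^2}$ rather than by $\nr{w}_{L^4}^2$, which is the same estimate). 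Your remark that one must peel off $\abs{u_1}^2-\abs{u_2}^2$ before factoring, because $A_1+A_2\notin L^2$, is exactly the point. One slip in (ii): you bound $\nr{1-\abs u^2}_{L^2}$ by $\abs u_\Ec^2$, whereas the definition \eqref{def-abs-Ec} gives directly $\nr{1-\abs u^2}_{L^2}\leq 2\abs u_\Ec$; with the exponent $2$ the resulting monomial $\abs u_\Ec^4$ is \emph{not} dominated by $(1+\abs u_\Ec)^2(1+\nr w_{H^1}^2)^2$ for large $\abs u_\Ec$, so you should use the linear bound to close the argument (which is what the paper does).
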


\begin{proof}
  Let $u \in \Ec$ and $w \in H^1(\R)$. We have $u+ w \in H^1_{\mathrm{loc}}(\R)$ and $u' + w' \in L^2(\R)$. Since $1 - \abs u \in L^2(\R)$, $u \in L^{\infty}(\R)$ and $w \in L^2 \cap L^{\infty}(\R)$ we also have by \eqref{estim-Linfinity-Ec} and Sobolev embeddings
  \begin{align*}
    \nr{1 - \abs{u+w}^2}_{L^2} 
    & \leq \nr{ 1 - \abs{u}^2}_{L^2} + 2
      \nr{u}_{L^{\infty}} \nr{ w}_{L^2} + \nr{w}_{L^{\infty}} \nr{w}_{L^2} \\
    &    \lesssim  \left( 1 + \abs u _\Ec \right) \left( 1 + \nr {w}_{H^1}^2 \right). 
  \end{align*}
  In particular $1 - \abs{u+w}^2 \in L^2(\R)$, and (i) and (ii) are proved.
  Now we consider $R > 0$ and $u_1,u_2,w_1,w_2$ as in (iii). We have
  \begin{align*}
    \nr{(u_1 + w_1) - (u_2 + w_2)}_{L^{\infty}}
    & \leq \nr{u_1 -
      u_2}_{L^{\infty}} + \nr{w_1 - w_2}_{L^{\infty}} \\
    &    \lesssim  d_\infty  (u_1,u_2) + \nr{w_1 - w_2}_{H^1}.
  \end{align*}
  The same applies for the $L^2(\R)$-norms of the derivatives. For the last term in $ d_\infty $ we write
  \begin{align*}
    \nr{ \abs{u_1 + w_1}^2 - \abs{u_2 + w_2}^2 }_{L^2}
    & \leq \nr{ \abs{u_1 }^2 - \abs{u_2}^2 }_{L^2} + \nr{ \abs{w_1 }^2 - \abs{w_2}^2 }_{L^2}\\
    & \quad + 2 \nr{u_1(\overline{w_1}- \overline{w_2})}_{L^2} + 2 \nr{(u_1-u_2)\overline{w_2}}_{L^2}.
  \end{align*}
  The first term is controlled by $ d_\infty (u_1,u_2)$. For the second we use \eqref{eq-A2-B2}. For the third we use \eqref{estim-Linfinity-Ec} to control $\nr{u_1}_{L^\infty}$. Finally for the last term we use the fact that $\nr{u_1-u_2}_{L^{\infty}} \leq  d_\infty (u_1,u_2)$ (which would not be the case with the distance $d_0$).
\end{proof}

\begin{remark} 
  For $u \in \Ec$ fixed, the map $w \in H^1 \mapsto u + w \in \Ec$ is also continuous for the metric $ d_0$. In other words, the last statement of Lemma \ref{lem-Ec-H1} holds with $ d_\infty $ replaced by $ d_0$ when $u_1 = u_2$.
\end{remark}

In order to study the nonlinearity of \eqref{eq:gp}, we set for $u \in \Ec$ 
\begin{equation*}
  F(u) = \big( 1-\abs u ^2 \big) u.
\end{equation*}

\begin{lemma}[Nonlinear Estimates] \label{lem-continuite-F-Ec-H1}
  The function $F$ maps $\Ec$ into $H^1(\R)$. Moreover, for $R > 0$ there exists $C_R >0$ such that for $u , u_1,u_2 \in \Ec$ and $w,w_1,w_2 \in H^1(\R)$ with 
  \[
  \max \big( \abs{u}_\Ec , \abs{u_1}_\Ec , \abs{u_2}_\Ec ,  \nr{w}_{H^1} , \nr{w_1}_{H^1} , \nr{w_2}_{H^1} \big) \leq R,
  \]
  we have
  \begin{gather*}
    \nr{F(u+w)}_{H^1} \leq C_R,\\
    \nr{F(u_1+w_1) - F(u_2+w_2)}_{H^1} \leq C_R \big(  d_\infty  (u_1,u_2) + \nr{w_1-w_2}_{H^1} \big). 
  \end{gather*}
\end{lemma}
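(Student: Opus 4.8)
The plan is to reduce both estimates to a bound for $F$ at a single element of $\Ec$ and then to quote Lemma~\ref{lem-Ec-H1}. The starting point is that for $v \in \Ec$ one has $1-\abs v^2 \in H^1(\R)$, as was checked inside the proof of Lemma~\ref{lem-abs-Ec}; in particular $1-\abs v^2$ is bounded. Since $v$ is also bounded and $v' \in L^2(\R)$, the product rule applies and yields
\[
\big(F(v)\big)' = (1-\abs v^2)\,v' - 2\Re\big(v'\overline v\big)\,v,
\]
where each term on the right is the product of an $L^2(\R)$ function with an $L^\infty(\R)$ function. Using $\nr{1-\abs v^2}_{L^2} \leq 2 \abs v_\Ec$ and the $L^\infty$ bound \eqref{estim-Linfinity-Ec}, this gives $\nr{F(v)}_{H^1} \leq \Phi(\abs v_\Ec)$ for some increasing function $\Phi$. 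In particular $F$ maps $\Ec$ into $H^1(\R)$, and, since $\abs{u+w}_\Ec$ is bounded in terms of $R$ by Lemma~\ref{lem-Ec-H1}(ii), this already gives $\nr{F(u+w)}_{H^1} \leq C_R$.

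For the Lipschitz estimate I would set $v_j = u_j + w_j$ ($j = 1,2$). By Lemma~\ref{lem-Ec-H1}(ii) these satisfy $\abs{v_1}_\Ec, \abs{v_2}_\Ec \leq C_R$, and by Lemma~\ref{lem-Ec-H1}(iii) one has $d_\infty(v_1,v_2) \leq C_R\big(d_\infty(u_1,u_2) + \nr{w_1-w_2}_{H^1}\big)$; so it suffices to prove
\[
\nr{F(v_1) - F(v_2)}_{H^1} \leq C_R\, d_\infty(v_1,v_2).
\]
For the $L^2(\R)$ part I would write $F(v_1)-F(v_2) = (1-\abs{v_1}^2)(v_1-v_2) + (\abs{v_2}^2 - \abs{v_1}^2)\,v_2$, bounding the first summand with $\nr{1-\abs{v_1}^2}_{L^2} \leq 2\abs{v_1}_\Ec$ and $v_1-v_2 \in L^\infty(\R)$, and the second with $\abs{v_1}^2 - \abs{v_2}^2 \in L^2(\R)$ (a term of $d_\infty$) and $v_2 \in L^\infty(\R)$. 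For the derivative I would differentiate and telescope,
\[
\big(F(v_1)\big)' - \big(F(v_2)\big)' = (1-\abs{v_1}^2)(v_1'-v_2') + (\abs{v_2}^2-\abs{v_1}^2)\,v_2' - 2 g_1 (v_1-v_2) - 2 (g_1-g_2)\, v_2,
\]
with $g_j = \Re(v_j'\overline{v_j}) \in L^2(\R)$, so that each term is the product of one of the \emph{small} factors $v_1-v_2$, $v_1'-v_2'$, $\abs{v_1}^2-\abs{v_2}^2$, $g_1-g_2$ with a factor bounded by $C_R$ built out of $v_1, v_2, v_1', v_2', 1-\abs{v_j}^2$. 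The only place requiring care is $(\abs{v_2}^2-\abs{v_1}^2)\,v_2'$, where both factors a priori lie only in $L^2(\R)$: here \eqref{eq-A2-B2} gives $\nr{\abs{v_1}^2-\abs{v_2}^2}_{L^\infty} \leq C_R \nr{v_1-v_2}_{L^\infty}$, so this factor can play the bounded role; likewise $g_1 - g_2 = \Re\big((v_1'-v_2')\overline{v_1} + v_2'(\overline{v_1}-\overline{v_2})\big)$ is estimated in $L^2(\R)$ by $C_R\big(\nr{v_1'-v_2'}_{L^2} + \nr{v_1-v_2}_{L^\infty}\big)$. Collecting all contributions gives the desired inequality.

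The only genuine difficulty is the bookkeeping: one must keep track, term by term, of which factor is placed in $L^2(\R)$ and which in $L^\infty(\R)$, noting in particular that both $1-\abs v^2$ and $\abs{v_1}^2-\abs{v_2}^2$ get used in both roles, at different places. This is also why the estimate is stated with $d_\infty$ and not $d_0$: terms such as $g_1(v_1-v_2)$ and $(\abs{v_2}^2-\abs{v_1}^2)v_2'$ are controlled through $\nr{v_1-v_2}_{L^\infty}$, which $d_0$ does not bound.
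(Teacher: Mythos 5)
Your argument is correct and follows essentially the same route as the paper: the same product-rule formula for $F(v)'$ (your $(1-\abs v^2)v' - 2\Re(v'\overline v)v$ is algebraically identical to the paper's $(1-2\abs v^2)v' - v^2\overline v'$), the same telescoping of $F(v_1)-F(v_2)$ into products of one ``small'' factor controlled by $d_\infty$ with one factor bounded by $C_R$, the same use of \eqref{eq-A2-B2} for $\nr{\abs{v_1}^2-\abs{v_2}^2}_{L^\infty}$, and the same final reduction via Lemma~\ref{lem-Ec-H1}. Your closing remark about why $d_\infty$ rather than $d_0$ is needed is accurate and consistent with the paper's discussion.
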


\begin{proof}
  Let $u \in \Ec$. We have $(1-\abs u^2) \in L^2(\R)$ (by definition) and $u
  \in L^{\infty}(\R)$ (by Lemma \ref{lem-abs-Ec}), so $F(u) \in
  L^2(\R)$. In addition, we have
  \begin{equation} \label{eq-Fu-prime}
    F(u)' = \big( 1-  2\abs u^2 \big) u'  -u^2 \overline u'.  
  \end{equation}
  Since $u^2 \in L^{\infty}(\R)$ and $u' \in L^2(\R)$ this proves that $F(u)' \in L^2(\R)$ and gives the first statement.
  By Lemmas \ref{lem-abs-Ec} and \ref{lem-Ec-H1} we have 
  \begin{align*}
    \nr{F(u_1 + w_1) - F(u_2 + w_2)}_{L^2}
    & \leq \nr{\abs{u_2 + w_2}^2 - \abs{u_1 + w_1}^2}_{L^2} \nr{u_1 + w_1}_{L^{\infty}}\\
    & \quad + \nr{ 1 - \abs{u_2 + w_2}^2 }_{L^2} \nr{ (u_1 +w_1) - (u_2+w_2)}_{L^{\infty}}\\
    & \lesssim_R   d_\infty  (u_1 + w_1 , u_2 + w_2) \\
    & \lesssim_R    d_\infty  (u_1,u_2) + \nr{w_1-w_2}_{H^1} .
  \end{align*}
  We proceed similarly for $F(u_1 + w_1)' - F(u_2 + w_2)'$, starting from \eqref{eq-Fu-prime} and using \eqref{eq-A2-B2} to estimate 
  \[
  \nr{\abs{u_1 + w_1}^2 - \abs{u_2+w_2}^2}_{L^{\infty}}.
  \]
  This concludes the proof.
\end{proof}

\subsection{Functions with Higher Regularity}
\label{sec:higher}

Functions on $\Ec$ can only be solutions of \eqref{eq:gp} in a weak sense. For computations it will be useful to have a dense subset of function with higher regularity.

We first give a precise meaning to the expression $-\partial_{xx}u +
\g \d u$ which appears in \eqref{eq:gp}. For $u,v\in H^1(\R)$ we have
formally 
\[
\innp{-\partial_{xx}u + \g \d u}{v} = q_\g(u,v),
\]
where $q_\g$ is the sesquilinear form defined on $H^1(\R)$ by
\[
q_\g(u,v) = \int_{\R}u'\bar v'dx + \g u(0)\bar v(0).
\]
This defines a closed form bounded from below on $H^1(\R)$. 
Then we can check that the corresponding selfadjoint operator on $L^2(\R)$ is given by 
\[
\Hg = - \frac {d^2}{dx^2}
\]
on the domain
\begin{equation} \label{dom-Hg}
D(\Hg) = \left \{u \in H^1(\R) \cap H^2(\R\setminus\{0\} ) \st u'(0^+) -  u'(0^-) = \g u(0) \right\}
\end{equation}
(see Theorem VI.2.1 in \cite{kato}). This means that for $u \in  D(\Hg)$ we define $\Hg u$ as the only $L^2(\R)$ function which satisfies $\innp{\Hg u} {\vf} = -\innp{u}{\vf''}$ for all $\vf \in C_0^{\infty}(\R\setminus\{0\})$. Notice that $\Hg$ can also be defined via the approach of selfadjoint extensions (see e.g. \cite[Theorem I.3.1.1]{AlGeHoHo88}).

We remark that functions in $C_0^\infty(\R)$ do not belong to the domain of $\Hg$ in general. For computations in a weak sense, we will use the following space of test functions:
\[
D_0(\Hg) = \left \{ u \in  D(\Hg) \st \supp(u) \text{ is
    compact} \right \}.
\]

It will be useful to apply the theory of selfadjoint operators to $\Hg$ on the Hilbert space $L^2(\R)$. However, functions in $\Ec$ are not in $L^2(\R)$. Set 
\[
X_\g^2 = \left \{ u \in L^{\infty}(\R) \cap \dot H^1(\R) \cap \dot
  H^2(\R\setminus\{0\}) \st u'(0^+) - u'(0^-) = \g u(0) \right \}.
\]
Functions in $X_\g^2$ have the same local properties as functions in $D(\Hg)$ (regularity and jump condition), but the integrability at infinity has been relaxed to include a dense subset of $\Ec$.

\begin{lemma}[Density Results] \label{lem-densite-X2}
  The following assertions hold.
  \begin{enumerate}[(i)]
  \item $D_0(\Hg)$ is dense in $H^1(\R)$.
  \item $X^2_\g \cap \Ec$ is dense in $\Ec$ for the distance $ d_\infty $, and hence for $ d_0$.
  \end{enumerate}
\end{lemma}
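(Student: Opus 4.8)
The plan is to establish the two density statements separately, handling the jump condition at $0$ with explicit corrector functions.

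\medskip

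\emph{Part (i): $D_0(\Hg)$ is dense in $H^1(\R)$.}
First I would reduce to approximating a function $u \in H^1(\R)$ that is already compactly supported, since such functions are dense in $H^1(\R)$ (truncate by a smooth cutoff). Given such a $u$, the issue is that $u$ need not satisfy the jump condition $u'(0^+)-u'(0^-)=\g u(0)$ and need not be $H^2$ away from $0$. To fix the regularity away from $0$, mollify $u$ on $(-\infty,0)$ and on $(0,+\infty)$ separately (or simply approximate in $H^1$ by piecewise-$C^\infty$, compactly supported functions agreeing in value at $0$). To fix the jump, I would add a small corrector: pick a fixed function $\chi \in C_0^\infty(\R)$ with $\chi(0)=0$ and, say, $\chi'(0^+)-\chi'(0^-)$ equal to some nonzero constant (for instance $\chi(x)=x\,\theta(x)\,\rho(x)$ where $\theta$ is a bump, or more simply take a piecewise-linear-times-cutoff profile regularized on each side); then for $v \in H^1$ compactly supported, smooth off $0$, with $v(0)$ small, the function $v - \lambda \chi$ with $\lambda$ chosen to cancel the jump mismatch lies in $D_0(\Hg)$ and is $O(\abs{v(0)} + \text{jump defect})$-close to $v$ in $H^1$. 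Since we may first arrange the approximant to have $v(0)$ and the jump defect as small as we like (the jump defect of a mollified compactly supported function can be made small, and its value at $0$ converges to $u(0)$, which we can make small by translating the problem — actually cleaner: allow $v(0)=u(0)$ fixed and just correct the jump, which costs $\abs{\chi}_{H^1}$ times the jump defect, which $\to 0$). Assembling these corrections gives a sequence in $D_0(\Hg)$ converging to $u$ in $H^1$.

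\medskip

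\emph{Part (ii): $X_\g^2 \cap \Ec$ is dense in $\Ec$ for $d_\infty$.}
Here the strategy is to write an arbitrary $u \in \Ec$ as a fixed reference profile plus an $H^1$ perturbation, approximate the $H^1$ part by part (i), and keep the reference profile inside $X_\g^2$. Concretely, I would first exhibit one convenient element $\phi_\g \in X_\g^2 \cap \Ec$ with $\abs{\phi_\g(x)} \to 1$ at infinity and satisfying the jump condition — for $\g=0$ one can take $\phi_0(x)=\tanh(x/\sqrt2)$ (or any smooth profile equal to $\pm1$ outside a compact set), and for $\g \ne 0$ modify it near $0$ to realize the jump, e.g. by adding a multiple of the corrector $\chi$ from part (i) times a cutoff, keeping it bounded and locally $H^2$. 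Given $u \in \Ec$, Lemma \ref{lem-abs-Ec} tells us $u$ is continuous, bounded, with $\abs u \to 1$; hence $u - e^{i\theta}\phi_\g$ (for a suitable branch/sign matching the behavior of $u$ at $\pm\infty$ — one may need to allow $\phi_\g$ to approach $e^{i\theta_\pm}$ at $\pm\infty$, or just observe $1-\abs u^2 \in L^2$ forces $u$ to stay near the unit circle so $u - \phi_\g \in H^1$ after adjusting phases) lies in $H^1(\R)$: its derivative is in $L^2$ by definition of $\Ec$, and it is in $L^2$ because both $u$ and $\phi_\g$ have modulus tending to $1$ and are continuous (so their difference is bounded and, using $1-\abs u^2, 1-\abs{\phi_\g}^2 \in L^2$ together with boundedness, one gets $u-\phi_\g \in L^2$). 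Now apply part (i): approximate $w := u - \phi_\g \in H^1$ by $w_n \in D_0(\Hg) \subset H^1$. Then $u_n := \phi_\g + w_n$ lies in $X_\g^2 \cap \Ec$ (the jump conditions of $\phi_\g$ and $w_n$ add up correctly to $\g\,u_n(0)$, local $H^2$-regularity off $0$ is inherited, boundedness and $\dot H^1$ are clear, and membership in $\Ec$ follows from Lemma \ref{lem-Ec-H1}(i)). Finally $d_\infty(u_n,u) = d_\infty(\phi_\g+w_n,\phi_\g+w) \to 0$ by Lemma \ref{lem-Ec-H1}(iii) (with $u_1=u_2=\phi_\g$), since $\nr{w_n-w}_{H^1}\to0$. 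The bound $d_0 \le d_\infty$ gives density for $d_0$ as well.

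\medskip

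\emph{Main obstacle.} The routine part is the mollification and the $H^1$-bookkeeping; the genuinely delicate point is the bookkeeping around $x=0$ — ensuring that the corrector used to install the jump condition has small $H^1$-norm (so the approximation is not spoiled), and that when we split $u = \phi_\g + w$ the jump of $\phi_\g$ is exactly $\g\,\phi_\g(0)$ while the $H^1$-perturbations we add from part (i) carry their \emph{own} jump $\g\,w_n(0)$, so that the sum $\phi_\g+w_n$ automatically satisfies $u_n'(0^+)-u_n'(0^-)=\g\,u_n(0)$ by linearity. A secondary point to get right is the choice of limiting phases $e^{i\theta_\pm}$ for the reference profile at $\pm\infty$ so that $u-\phi_\g$ really is in $L^2$ and not merely bounded; since $u$ is only known to have $\abs u \to 1$, one should take $\phi_\g$ with the correct one-sided limits matching those of $u$ (which exist because $u' \in L^2$ and $1-\abs u^2 \in L^2$ force $u$ to have limits $e^{i\theta_\pm}$ at $\pm\infty$), and absorb the phase mismatch into $w$.
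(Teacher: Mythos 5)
Your proposal contains two genuine gaps, one in each part. In part (i), you install the jump condition by subtracting $\lambda\chi$ for a \emph{fixed} corrector $\chi$, with $\lambda$ chosen to cancel the jump mismatch. But a mollified approximant $v$ of $u$ is smooth at $0$, so its mismatch is $v'(0^+)-v'(0^-)-\g v(0)=-\g v(0)$, and by the embedding $H^1(\R)\hookrightarrow L^{\infty}(\R)$ any $H^1$-approximant has $v(0)\to u(0)$. Hence $\lambda$ stays bounded away from $0$ whenever $u(0)\neq 0$, and the correction $\lambda\chi$ has $H^1$-norm of fixed positive size: the approximation is spoiled. Neither of your proposed remedies works --- you cannot translate the problem (the Dirac sits at $0$), and the jump defect does \emph{not} tend to $0$: it tends to $-\g u(0)$. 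The missing idea is that a fixed derivative jump can be installed at vanishing $H^1$-cost only by letting the corrector \emph{concentrate} at the origin. This is exactly the paper's device: it multiplies by $\z_n(x)=1+\frac{\g\abs{x}}{2}e^{-nx^2}$, whose deviation from $1$ has a derivative jump at $0$ independent of $n$ but $H^1$-norm $O(n^{-1/4})$.

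The gap in part (ii) is more structural: the decomposition $u=\phi_\g+w$ with $w\in H^1(\R)$ is simply not available for a general $u\in\Ec$. An element of $\Ec$ need only satisfy $\abs u\to 1$; it need not have limits at $\pm\infty$ (take $u(x)=e^{i\sqrt{\log(2+\abs x)}}$, which has $u'\in L^2(\R)$ and $1-\abs u^2\equiv 0$ but oscillates forever), and even when limits $e^{i\th_\pm}$ exist the phase may converge like $\abs{x}^{-1/4}$, so that $u-\phi_\g\notin L^2(\R)$. Your argument that $1-\abs u^2\in L^2$ plus boundedness gives $u-\phi_\g\in L^2$ controls only the modulus, not the phase. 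Consequently all your approximants live in the affine leaf $\phi_\g+H^1(\R)$, which is a proper, non-$d_\infty$-dense subset of $\Ec$, so they cannot reach such $u$ no matter how part (i) is repaired. The paper avoids any global decomposition by quoting the density of $X^2_0$ in $\Ec$ obtained by local mollification (\cite[Lemma 6]{Ge06}) and then converting $X^2_0$ into $X^2_\g$ with the same multiplier $\z_n$, which transfers the jump condition exactly because $\z_n(0)=1$ and $\z_n'(0^+)-\z_n'(0^-)=\g$.
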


\begin{proof}
  It follows from a regularization argument by convolution with a
  mollifier (see e.g. \cite[Lemma 6]{Ge06}) that $X^2_0$ is dense in $\Ec$. Let $u \in X^2_0$. For $n \in \N\setminus\{0\}$ and $x \in \R$ we set $\z_n (x) = 1 + \frac {\g \abs x} 2 e^{-nx^2}$. We have
  \[
  (\z_n u)'(0^+) - (\z_n u)'(0^-) = u(0) \big(\z_n'(0^+) -  \z_n'(0^-) \big) = \g u(0) = \g (\z_n u)(0).
  \]
  This proves that $\z_n u \in X^2_\g$. On the other hand
  \[
  \nr{\z_n - 1}_{L^{\infty}} \limt n \infty 0,
  \quad
  \nr{\z_n'}_{L^2} \limt n \infty 0
  \quad
  \text{and}\quad 
  \nr{1- \abs {\z_n}^2} _{L^2} \limt n \infty 0
  \]
  so
  \[
  d_\infty  (u,\z_n u)\limt n \infty 0,
  \]
  and the second statement is proved. Since $H^2(\R)$ is dense in $H^1(\R)$, we similarly prove that $D_0(\Hg)$ is dense in $H^1(\R)$.
\end{proof}

Formally, we can apply $\Hg$ to functions in $X_\g^2$. However, to
emphasize the fact that a function $ u \in X^2_\g$ is not necessarily
in $D(\Hg)$, we denote by $\tHg u$ the function $-u''$ (again, this is
the only $L^2(\R)$ function which coincides with $-u''$ on
$(0,+\infty)$ and $(-\infty,0)$, in particular for $\vf \in
C_0^{\infty}(\R\setminus\{0\})$ we have $\big<{\tHg u},{\vf}\big> =
\innp{u}{-\vf''}$). Integrations by parts between functions in
$X^2_\g$ and $D_0(\Hg)$ read as follows: for $u \in X^2_\g$ and $\vf \in  D_0(\Hg)$ we have
\[
\big<{\tHg u},{\vf}\big> = \innp{u}{\Hg \vf}.
\]

\section{The Linear Evolution}
\label{sec:propagator}

In this section, we study the propagator associated to the linear part of \eqref{eq:gp}. We naturally begin in $L^2(\R)$, and then we extend this propagator to functions in the energy space $\Ec$. For the proof of Theorem \ref{th-pb-cauchy} it will also be useful to prove some results for the linear evolution in $H^1(\R)$ and $X^2_\g$.

\subsection{\texorpdfstring{The Linear Evolution in $L^2(\mathbb R)$}{The Linear Evolution in L2}}

\label{subsec:propagator}

In the Hilbert space $L^2(\R)$, the selfadjoint operator $\Hg$ generates a unitary group $t \mapsto e^{-it\Hg} \in \mathcal L (L^2(\R))$. In particular, for $u_0 \in  D(\Hg)$ the function $u : t \mapsto e^{-it\Hg} u_0$ belongs to $C^1(\R,L^2(\R)) \cap C^0(\R, D(\Hg))$ and is the unique solution for the problem 
\[
\begin{cases}
  \partial_t u(t) + i \Hg u(t) = 0, \quad \forall t \in\R,\\
  u(0) = u_0.
\end{cases}
\]
The purpose of this section is to describe more explicitly the operator $e^{-it\Hg}$.

It is known that for $t \neq 0$ the kernel $K_0(t)$ of the free propagator $e^{-it H_0}$ is given by
\[
K_0(t,\z)  = \frac 1 {\sqrt {4 i \pi t}} e^{-\frac {\z^2}{4it}}.
\]
As explained in introduction, our purpose is to give an explicit expression for the kernel of $e^{-it\Hg} - e^{-itH_0}$. For $x,y \in \R$ we set $\G(t,x,y) = 0$ if $\g = 0$,
\begin{align*}
  \G(t,x,y)& = - \frac \g 2 \int_0^{+\infty} e^{-\frac {\g s} 2} K_0 (t,s + \abs x + \abs y) ds && \text{if } \g > 0,
  \\
  \G(t,x,y) &=  -   \frac {\abs \g} 2 \int_0^{+\infty} e^{- \frac {\abs \g s} 2} K_0 (t,s - \abs x - \abs y) ds + \frac {\abs \g} 2 e^{i\frac {\g^2 t}4} e^{-\frac {\abs \g (\abs x + \abs y)}2} &&  \text{if } \g < 0.
\end{align*}
Then we denote by $\G(t)$ the operator on the Schwartz space $\Sc$
whose kernel is $\G(t,x,y)$. We first observe that $\G(t)^* = \G(-t)$ for all $t \in \R$.

\begin{proposition}[Description of the Propagator] \label{prop-Gamma}
  Let $t \in \R$. Then $\G(t)$ extends to a bounded operator on $L^2(\R)$ and we have
  \[
  e^{-it\Hg} = e^{-itH_0} + \G(t).
  \]
\end{proposition}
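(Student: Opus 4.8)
The strategy is to verify the identity $e^{-it\Hg} = e^{-itH_0} + \G(t)$ by checking that the right-hand side solves the correct evolution problem, or equivalently by computing the resolvent. I would take the resolvent route, since the kernel of $\G(t)$ has the structure of a Laplace transform (in the variable $s$) of the free kernel, which suggests that it arises naturally from the Krein-type resolvent formula for $\Hg$. Recall that for $z$ in the resolvent set, the difference $(\Hg - z)^{-1} - (H_0 - z)^{-1}$ is a rank-one operator: writing $g_z(x) = \frac{1}{2\sqrt{-z}} e^{-\sqrt{-z}\,\abs x}$ for the Green's function of $H_0 - z$ (with the branch $\Re\sqrt{-z} > 0$), one has the classical formula
\[
(\Hg - z)^{-1} = (H_0 - z)^{-1} - \frac{\g}{1 + \g g_z(0)} \, \dual{\,\cdot\,}{g_z(-\cdot)} \, g_z,
\]
valid as long as $1 + \g g_z(0) \neq 0$, i.e. away from the possible bound state. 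The plan is then to recover $e^{-it\Hg}$ from this resolvent via the Stone formula / inverse Laplace transform (Laplace transform in $t$ of $e^{-it\Hg}$ is, up to constants, the resolvent at an imaginary spectral parameter), and to match the resulting kernel with $e^{-itH_0} + \G(t)$.

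Concretely I would proceed in the following steps. First, establish that $\G(t)$ is a bounded operator on $L^2(\R)$: in the case $\g > 0$ this is immediate since $K_0(t, \cdot)$ is a bounded convolution-type kernel and the $s$-integral against $e^{-\g s/2}$ converges absolutely, giving a kernel that is a continuous bounded function of $\abs x + \abs y$, hence a bounded (in fact Hilbert–Schmidt on bounded sets, but we only need $L^2$-boundedness which follows from a Schur test using the exponential decay in $\abs x + \abs y$); in the case $\g < 0$ the extra term $\frac{\abs\g}{2} e^{i\g^2 t/4} e^{-\abs\g(\abs x + \abs y)/2}$ is manifestly an $L^2$-bounded kernel, and for the integral term one uses the decomposition alluded to in the introduction (Lemma~\ref{lem-decomposition-G}) to handle the fact that $s - \abs x - \abs y$ can vanish. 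Second, compute the Laplace transform in $t$ of each term: a direct computation shows that $\int_0^\infty e^{izt} K_0(t,\z)\, dt$ gives (a constant times) $g_z(\z)$, and interchanging with the $s$-integral in the definition of $\G$ produces exactly the rank-one kernel $-\frac{\g}{1+\g g_z(0)} g_z(x) g_z(y)$ after recognizing a geometric series: the factor $e^{-\g s/2}$ integrated against $g_z(s + \abs x + \abs y) = \frac{1}{2\sqrt{-z}} e^{-\sqrt{-z}(s + \abs x + \abs y)}$ collapses, and the $\g$-dependence reassembles into $\frac{\g}{1 + \g/(2\sqrt{-z})} = \frac{\g}{1 + \g g_z(0)}$. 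Third, invoke uniqueness of the Laplace transform (both sides being strongly continuous bounded-operator-valued functions of $t$, using the already-established $L^2$-boundedness of $\G(t)$) to conclude the operator identity for all $t$. Finally, note $\G(t)^* = \G(-t)$ is consistent with $e^{-it\Hg}$ being unitary.

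The main obstacle is the case $\g < 0$. Here the naive integral $\int_0^\infty e^{-\abs\g s/2} K_0(t, s - \abs x - \abs y)\, ds$ involves $K_0$ evaluated at arguments passing through zero, where $K_0(t,\z) = (4i\pi t)^{-1/2} e^{-\z^2/(4it)}$ is perfectly smooth but the bookkeeping of which "branch" one is on (and the appearance of the compensating exponential term with the factor $e^{i\g^2 t/4}$, which is precisely the contribution of the bound state $-\g^2/4$ of $\Hg$ when $\g < 0$) is delicate. The clean way around this is exactly the decomposition lemma (Lemma~\ref{lem-decomposition-G}) advertised in the introduction: split $\G(t)$ into a piece that is genuinely of convolution type with a decaying kernel and a piece carrying the bound-state oscillation, and verify the Laplace-transform identity on each piece separately, using on the spectral side the separation of the pole of $1/(1 + \g g_z(0))$ at $z = -\g^2/4$ from the branch cut. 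For $\g > 0$ there is no bound state and the computation is routine. I would also need to be slightly careful about the contour/convergence in applying the inverse Laplace transform term by term, but this is handled by working first with $u_0 \in \Sc$ (or in $D(\Hg)$), where everything converges absolutely, and then extending by density and the uniform $L^2$-bounds.
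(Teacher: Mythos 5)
Your route is genuinely different from the paper's. The paper never touches the resolvent: it fixes $\vf \in C_0^\infty(\R\setminus\{0\})$, sets $u(t) = e^{-itH_0}\vf + \G(t)\vf$, verifies by direct differentiation under the integral sign that $u$ solves $(i\partial_t + \partial_{xx})u = 0$ away from $x=0$, checks the jump condition $\partial_x u(t,0^+)-\partial_x u(t,0^-)=\g u(t,0)$ by an explicit integration by parts in $s$ (using $\partial_s K_0(t,s+\abs y) = \frac{s+\abs y}{2it}K_0(t,s+\abs y)$), proves $u(t)\to\vf$ in $L^2$ as $t\to 0$, and then identifies $u(t)$ with $e^{-it\Hg}\vf$ by showing $\frac{d}{dt}\innp{u(t)}{e^{-it\Hg}\p_0}_{L^2}=0$ and using density; $L^2$-boundedness of $\G(t)$ is deduced \emph{a posteriori} from the identity on a dense set. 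Your resolvent/Laplace-transform route is the classical derivation (it is essentially how the kernel is obtained in the references the paper cites for $\g>0$), and your identification of the extra term $\frac{\abs\g}{2}e^{i\g^2 t/4}e^{-\abs\g(\abs x+\abs y)/2}$ as the bound-state contribution at $-\g^2/4$ is exactly right. What your approach buys is that the algebra is forced by the Krein formula rather than checked by hand; what the paper's approach buys is that the explicit integration-by-parts machinery it sets up (the operators $L_s, L_x, L_y$) is reused immediately afterwards for the decay and $H^1$ estimates, which you would still need separately.

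There is, however, one concrete gap in your first step. The kernel of $\G(t)$ does \emph{not} decay exponentially in $\abs x + \abs y$ for $\g>0$: since $\abs{K_0(t,\z)} = (4\pi\abs t)^{-1/2}$ for every real $\z$, the factor $e^{-\g s/2}$ only gives integrability in $s$, and the naive bound is $\abs{\G(t,x,y)}\leq (4\pi \abs t)^{-1/2}$ uniformly in $x,y$, with no decay at all. A Schur test therefore fails as stated; a bounded kernel is not an $L^2$-bounded operator. Any pointwise decay in $x,y$ must be extracted from the oscillation of $K_0$ by integration by parts in $s$ and $y$ (this is precisely the paper's Lemma \ref{lem:decay-estimate}, which yields $\abs{(\G(t)\vf)(x)}\lesssim \pppg x^{-2}$ for $\vf\in\Sc$, at the cost of derivatives of $\vf$ — so it gives mapping properties on $\Sc$, not boundedness on $L^2$). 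The fix is to reorder your argument: the crude bound $\abs{\innp{\G(t)\vf}{\p}_{L^2}}\lesssim \abs t^{-1/2}\nr{\vf}_{L^1}\nr{\p}_{L^1}$ is locally integrable in $t$, which is enough to define the Laplace transform of the matrix elements, run the Fubini computation (which is absolutely convergent for $\Im z>0$ for the same reason), and invoke scalar Laplace-transform uniqueness to get $\innp{\G(t)\vf}{\p}=\innp{(e^{-it\Hg}-e^{-itH_0})\vf}{\p}$ for $\vf,\p$ in a dense set; $L^2$-boundedness of $\G(t)$ then follows from the identity, as in the paper. With that reordering your argument goes through; as written, the boundedness step would not.
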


The kernel of $e^{-it\Hg}$ was derived for $\g > 0$ in
\cite{gaveaus86,schulman86, manoukian89}. A more general perturbation
(with $\d$ and $\d'$ interactions) is considered in
\cite{albeveriobd94} (see also \cite{AdamiSacchetti}). Here we give a
proof for any $\g \in \R$. The case $\g < 0$ requires a particular
attention.

For computations on $\G(t,x,y)$ when $\gamma>0$, we will often use the operators 
\begin{equation} \label{def-op-L}
  \begin{aligned}
    L_x (t,s,x,y) &= - \frac {2it \sign(x)}{(s + \abs x + \abs y)} \frac {\partial}{\partial x}, \\
    L_y (t,s,x,y) &= - \frac {2it \sign(y)}{(s + \abs x + \abs y)}\frac {\partial}{\partial y}, \\
    L_s (t,s,x,y) &= - \frac {2it }{(s + \abs x + \abs y)} \frac{\partial}{\partial s}.
  \end{aligned}
\end{equation}
These three operators leave invariant the function
\[ 
(t,s,x,y) \mapsto K_0 (t,s + \abs x + \abs y).
\]
They will be used in integrations by parts to obtain powers of $t$ and negative powers of $x$ and $y$. Thus we also introduce the formal adjoints of these operators. For instance for $L_x$ we set 
\begin{equation} \label{def-L-star}
  L_x(t,s,x,y)^* \vf : y \mapsto  2it \, \frac {\partial}{\partial x} \left(\frac {\sign(y) \vf(y)}{ s + \abs x + \abs y} \right).
\end{equation}

Things will be quite different for $\g < 0$ since $(s - \abs x - \abs y)$ can vanish even
if $(s,x,y) \neq (0,0,0)$. In this case the following decomposition lemma will be of great use.

\begin{lemma}[Decomposition of the Kernel] \label{lem-decomposition-G}
  Assume that $\g < 0$. For $t > 0$ and $\rho \geq 0$, let $g(t,\rho)$ be
  defined by 
  \[
  g(t,\rho) = e^{- \abs \g \rho \sqrt t} \left(\frac 1 {\sqrt {4i \pi t}}\int_{-2\rho \sqrt t}^{+\infty} e^{-\frac {(v- i\g t)^2}{4it}}\, dv - 1 \right).
  \]
  Then the following assertions hold.
  \begin{enumerate} [(i)]
  \item 
    For $t > 0$, the operator $\G$ can be decomposed in 
    \[
    \G(t)  = \G_1(t)  + \G_2(t)
    \]
    where the operators $\G_1(t)$ and $\G_2(t)$ have kernels
    \begin{align*}
      \G_1(t,x,y) &= - \frac {\abs \g} 2 \int_0^{\frac {\abs x + \abs y}2}
                    e^{- \frac {\abs \g s} 2} K_0 (t,s - \abs x - \abs y) ds,
      \\
      \G_2(t,x,y) &= - \frac {\abs \g} 2 e^{i\frac {\g^2 t}4} e^{-\frac
                    {\abs \g (\abs x + \abs y) } 4} g \left( t , \frac {\abs x + \abs
                    y}{4 \sqrt t}
                    \right).
    \end{align*}
  \item 
    For any $T > 0$ the function $g$ is bounded on $(0,T] \times \R_+$. Moreover we have 
    \[
    g(t,\rho) \rightarrow 0\quad\text{as }t \to 0,\,\rho \to +\infty.
    \]
  \end{enumerate}
\end{lemma}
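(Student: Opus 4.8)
The plan is to verify the decomposition in (i) by an explicit change of variables in the integral defining $\G(t,x,y)$ for $\g<0$, and then to establish the bounds in (ii) by analysing the function $g(t,\rho)$ directly. First I would recall that for $\g<0$,
\[
\G(t,x,y) = -\frac{\abs\g}2 \int_0^{+\infty} e^{-\frac{\abs\g s}2} K_0(t, s-\abs x-\abs y)\,ds + \frac{\abs\g}2 e^{i\frac{\g^2 t}4}e^{-\frac{\abs\g(\abs x+\abs y)}2}.
\]
I would split the first integral at $s=\frac{\abs x+\abs y}2$. The part on $[0,\frac{\abs x+\abs y}2]$ is exactly $\G_1(t,x,y)$. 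For the tail $\int_{\frac{\abs x+\abs y}2}^{+\infty}$, I would substitute $s = \abs x+\abs y + w$ (so that the argument of $K_0$ becomes $w$), giving an integral over $w\in(-\frac{\abs x+\abs y}2,+\infty)$ of $e^{-\frac{\abs\g}2(\abs x+\abs y+w)}K_0(t,w)$. Pulling out $e^{-\frac{\abs\g}2(\abs x+\abs y)}$ and writing $K_0(t,w)=\frac1{\sqrt{4i\pi t}}e^{-w^2/(4it)}$, I would complete the square: $-\frac{\abs\g w}2 - \frac{w^2}{4it} = -\frac{(w-i\g t)^2}{4it} + \frac{\g^2 t}{4}$ (using $\abs\g=-\g$ and keeping careful track of signs), which produces the factor $e^{i\g^2 t/4}$. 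A final rescaling $w = v\sqrt t$ together with the identification $\rho = \frac{\abs x+\abs y}{4\sqrt t}$ (so that $-\frac{\abs x+\abs y}2\sqrt t / \sqrt t$ and the exponential prefactor match the definition of $g$) should show that this tail equals $-\frac{\abs\g}2 e^{i\g^2 t/4}e^{-\frac{\abs\g(\abs x+\abs y)}2}$ times the bracket in $g(t,\rho)$ up to the $-1$ term. Collecting the leftover $-1\times(-\frac{\abs\g}2 e^{i\g^2t/4}e^{-\frac{\abs\g(\abs x+\abs y)}2})$ against the boundary term $+\frac{\abs\g}2 e^{i\g^2t/4}e^{-\frac{\abs\g(\abs x+\abs y)}2}$ in $\G$ is the point where the two must cancel up to the precise form of $\G_2$; the discrepancy between $e^{-\frac{\abs\g(\abs x+\abs y)}2}$ and $e^{-\frac{\abs\g(\abs x+\abs y)}4}$ in the statement of $\G_2$ is absorbed by the $e^{-\abs\g\rho\sqrt t}$ factor sitting inside $g$.

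For part (ii), I would write the bracket in $g(t,\rho)$ as a tail (or defect) of a Gaussian integral. Completing the square backwards, $\frac1{\sqrt{4i\pi t}}\int_{\R} e^{-(v-i\g t)^2/(4it)}\,dv = 1$ by the standard Fresnel/Gaussian identity (the contour shift by $i\g t$ is justified since the integrand is entire and decays suitably in the relevant sectors). Hence
\[
\frac1{\sqrt{4i\pi t}}\int_{-2\rho\sqrt t}^{+\infty} e^{-(v-i\g t)^2/(4it)}\,dv - 1 = -\frac1{\sqrt{4i\pi t}}\int_{-\infty}^{-2\rho\sqrt t} e^{-(v-i\g t)^2/(4it)}\,dv,
\]
so $g(t,\rho) = -e^{-\abs\g\rho\sqrt t}\cdot\frac1{\sqrt{4i\pi t}}\int_{-\infty}^{-2\rho\sqrt t} e^{-(v-i\g t)^2/(4it)}\,dv$. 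On this range $v\leq -2\rho\sqrt t<0$, so after shifting the contour to make the Gaussian genuinely decaying one estimates $\bigl|\frac1{\sqrt{4i\pi t}}\int_{-\infty}^{-2\rho\sqrt t} e^{-(v-i\g t)^2/(4it)}\,dv\bigr| \lesssim e^{-\rho^2}\cdot e^{\abs\g\rho\sqrt t}$ (the $e^{\abs\g\rho\sqrt t}$ coming from the real part of the linear term $\Re\bigl(-\frac{-2\abs\g\rho\sqrt t\cdot(i\g t)}{4it}\bigr)$ type contribution, i.e.\ exactly the factor cancelled by the prefactor $e^{-\abs\g\rho\sqrt t}$). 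This yields $\abs{g(t,\rho)} \lesssim e^{-\rho^2}$ uniformly in $t\in(0,T]$, which simultaneously gives boundedness on $(0,T]\times\R_+$ and the decay $g(t,\rho)\to 0$ as $\rho\to+\infty$ (the limit being in fact uniform in $t\leq T$, hence a fortiori as $t\to0,\rho\to+\infty$ jointly).

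I expect the main obstacle to be bookkeeping rather than conceptual: getting every factor of $2$, $4$, $\sqrt t$, and every sign in the square-completion and the two successive changes of variables to line up so that the boundary term in the original $\G(t,x,y)$ cancels precisely against the $-1$ in $g$, leaving exactly $\G_2$ as stated. A secondary technical point is the rigorous justification of the Fresnel identity and of the contour shift $v\mapsto v+i\g t$ for the oscillatory Gaussian $e^{-(v-i\g t)^2/(4it)}$ for $t>0$; this is standard (write $\frac1{4it}=\frac1{4t}(-i)$, deform in the appropriate half-plane using Cauchy's theorem and decay on large arcs, or regularise $t\to t-i0$), but it must be stated. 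Once these are in hand the estimate $\abs{g(t,\rho)}\lesssim e^{-\rho^2}$ is immediate from the Gaussian tail bound, and both claims in (ii) follow.
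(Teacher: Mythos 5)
Part (i) of your plan is correct and is exactly the paper's computation: split the $s$-integral at $\frac{\abs x+\abs y}2$, substitute to centre the Gaussian, complete the square to extract $e^{i\g^2t/4}$, and observe that the $-1$ inside $g$ cancels the boundary term $+\frac{\abs\g}2e^{i\g^2t/4}e^{-\abs\g(\abs x+\abs y)/2}$ of the original kernel, the factor $e^{-\abs\g\rho\sqrt t}$ inside $g$ accounting for the difference between the exponents $/2$ and $/4$. No issues there.

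Part (ii) contains a genuine error. Your reduction via the Fresnel identity to the complementary tail $-\frac1{\sqrt{4i\pi t}}\int_{-\infty}^{-2\rho\sqrt t}e^{-(v-i\g t)^2/(4it)}\,dv$ is legitimate (and a reasonable variant of the paper's direct contour deformation), but the claimed bound $\bigl|\int_{-\infty}^{-2\rho\sqrt t}\bigr|\lesssim e^{-\rho^2}e^{\abs\g\rho\sqrt t}$, hence $\abs{g(t,\rho)}\lesssim e^{-\rho^2}$, is false. On the real ray the integrand has modulus $\abs{e^{-(v-i\g t)^2/(4it)}}=e^{\g v/2}=e^{\abs\g\abs v/2}$ for $v<0$, which \emph{grows} as $v\to-\infty$: the integral is only conditionally convergent, there is no Gaussian tail bound to invoke, and all the decay must come from the oscillation $e^{iv^2/(4t)}$. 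If you rotate the ray onto the diagonal direction $e^{i\pi/4}$ (the only deformation that makes the Gaussian genuinely decay), you must connect the endpoints $-2\rho\sqrt t$ and $-2\rho\sqrt te^{i\pi/4}$ by an arc of radius $2\rho\sqrt t$; on that arc $\abs{e^{-z^2/(4it)}}=e^{-\rho^2\sin(2\th)}$, which equals $1$ at $\th=0$, and after normalisation the arc contributes $\frac\rho{\sqrt\pi}\int_0^{\pi/4}e^{-\rho^2\sin(2\th)+\abs\g\rho\sqrt t\cos\th}\,d\th=O\bigl(\rho^{-1}e^{\abs\g\rho\sqrt t}\bigr)$. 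This arc term (the paper's $I_3$) dominates, so $g(t,\rho)$ decays only polynomially in $\rho$, not like $e^{-\rho^2}$. This is still sufficient for the two conclusions of (ii) — boundedness on $(0,T]\times\R_+$ and $g\to0$ as $t\to0$, $\rho\to+\infty$ — but the step you describe as ``immediate from the Gaussian tail bound'' is precisely the nontrivial part of the proof, and as written your argument skips it. To repair the proof you need the explicit arc estimate above (together with the easy bound on the diagonal ray, where the integrand is $e^{-r^2+O(\abs\g\abs r\sqrt t)}$ and genuinely Gaussian), which is exactly how the paper proceeds.
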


The interest of the decomposition $\G = \G_1 + \G_2$ is that on the
one hand $\abs{s - \abs x - \abs y} \geq \frac {\abs x + \abs y}2$
when $s \leq \frac {\abs x + \abs y}2$, so it will be possible to deal
with the contributions of $\G_1$ as for $\G(t)$ in the case $\g > 0$,
using operators of the form \eqref{def-op-L} with $(s  +\abs x + \abs
y)$ replaced by $(s - \abs x - \abs y)$. On the other hand, $\G_2$
will have nice properties given by the exponential decay in $x$ and $y$ of its kernel.

\begin{proof} [Proof of Lemma \ref{lem-decomposition-G}]
Let $x,y \in \R$ and $t>0$. 
We have 
\begin{align*}
\int_{\frac {\abs x + \abs y}2}^{+\infty} e^{- \frac {\abs \g s} 2} e^{-\frac {(s- \abs x - \abs y)^2}{4it}}  \, ds 
& =  {e^{- \frac {\abs \g (\abs x + \abs y)} 2}}    \int_{-\frac {\abs x + \abs y}2}^{+\infty} e^{- \frac {\abs \g v}2} e^{-\frac {v^2}{4it}}\, dv\\
& =   e^{i\frac {\g^2 t}4}  e^{-\frac {\abs \g (\abs x + \abs y) } 2}  \int_{-\frac {\abs x + \abs y}2}^{+\infty} e^{-\frac {(v- i\g t)^2}{4it}}\, dv ,
\end{align*}
and (i) follows.
To prove (ii), we argue as follows.
For $R > 0$, $t > 0$ and $\rho \geq 0$ we set
\[
I_R(t,\rho) = \frac 1 {\sqrt {4i \pi t}}\int_{-2 \rho \sqrt t}^{2 R \sqrt t} e^{-\frac {(v- i\g t)^2}{4it}}\, dv .
\]
For $t > 0$ and $z \in \C$ we also  set 
\[
f_t(z) =  \frac {1}{\sqrt {4i\pi t}}e^{- \frac {(z-i\g t)^2}{4it}}.
\]
This defines a holomorphic function on $\C$. Then we have 
\[
g(t,\rho) = e^{-\abs \g \rho \sqrt t} \lim_{R \to +\infty} \big(-I_{1,R}(t)  + I_{2,R}(t,\rho)  + I_{3}(t,\rho) \big) 
\]
where for $R > 0$ we have denoted by $I_{1,R}(t) $, $I_{2,R}(t,\rho) $ and $I_{3}(t,\rho) $ the integrals of $f_t$ along the curves $\g_1 : \th \in \big[0,\frac \pi 4\big] \mapsto 2R \sqrt t e^{i\th}$, $\g_2 : r \in [-\rho , R] \mapsto 2r \sqrt t e^{i\frac \pi 4} = r \sqrt {4it}$ and $\g_3 : \th \in \big[0,\frac \pi 4 \big] \mapsto -2\rho \sqrt t e^{i\th}$, respectively. 
We have
\begin{align*}
\lim _{R \to +\infty} \abs{I_{1,R}(t)}
& = \lim _{R \to +\infty}\abs*{\frac {2iR\sqrt t }{\sqrt{4i\pi t} } \int_0^{\frac \pi 4} e^{i\th} \exp\left(-\frac {(2\sqrt t Re^{i\th} -i\g t)^2}{4it} \right) \, d\th} \\
& \leq \lim _{R \to +\infty}\frac {R} {\sqrt{\pi} } \int_0^{\frac \pi 4} \exp \left(- R^2 \sin(2\th) -  {\abs \g R \sqrt t \cos(\th)}  \right) \, d\th\\
& = 0.
\end{align*}
Using the dominated convergence theorem we obtain
\begin{align*}
\lim _{R \to +\infty}I_{2,R}(t,\rho) 
& = \lim _{R \to +\infty}\frac 1 {\sqrt \pi} \int_{-\rho}^R \exp \left(- \frac {(r\sqrt {4it} -i\g t  )^2}{4it} \right) \, dr\\ 
& =\frac 1 {\sqrt \pi} \int_{-\rho}^{+\infty} \exp \left(-r^2 + r \g \sqrt{it} - \frac {i\g^2 t}4 \right) \, dr \xrightarrow [\substack{t \to 0 \\ \rho \to +\infty}]{} 1.
\end{align*}
And finally
\begin{align*}
\abs{I_3(t,\rho)} 
& =  \abs*{-\frac {i\rho} {\sqrt {i \pi}} \int_0^{\frac \pi 4} e^{i\th} \exp \left( - \frac {(-2\rho\sqrt t  e^{i\th} - i\g t)^2}{4it} \right) \, d\th} \\
& \leq  \frac {\rho} {\sqrt { \pi}} \int_0^{\frac \pi 4} \abs*{\exp \left( i \rho^2 e^{2i\th} - \g \rho \sqrt t  e^{i\th} - \frac {i\g^2 t } 4 \right)} \, d\th\\
& \leq \frac {\rho} {\sqrt { \pi}} \int_0^{\frac \pi 4} \exp \left( - \rho^2 \sin(2\th) + \abs \g \rho \sqrt t \cos(\th)  \right) \, d\th.
\end{align*}
At this point we have proved that $I_3$ and hence $\lim_{R \to +\infty} I_R$ are bounded uniformly in $t \in (0,T]$ and $\rho \in [0,\rho_0]$ for any $T,\rho_0 > 0$. For $\rho \geq 1$ we write
\begin{align*}
\abs{I_3(t,\rho)}
& \leq \frac {\rho} {\sqrt { \pi}} \int_0^{\rho^{-3/2}} \exp \left(\abs \g \rho \sqrt t   \right) \, d\th +  \frac {\rho} {\sqrt { \pi}} \int_{\rho^{-3/2}}^{\frac \pi 4} \exp \left( - \rho^2 \sin(2\th)  + \abs\g  \rho \sqrt t  \right) \, d\th\\
& \leq e^{\abs \g \sqrt t \rho} \left(\frac {1} { \sqrt {\pi \rho}} + \frac {\sqrt \pi \rho e^{-\rho^2 \sin(2 \rho^{-3/2})}} {4} \right),
\end{align*}
and hence 
\[
e^{- \abs \g \rho \sqrt t} I_3(t,\rho) \limt \rho {+\infty} 0, 
\]
uniformly in $t$. This concludes the proof.
\end{proof}

Before giving the proof of Proposition \ref{prop-Gamma}, we state a result about the decay of $\G(t) \vf$ when $\vf \in \Sc$. This will be useful when defining $\Gamma$ in the 
distributional sense (see  \eqref{eq:distributional-sense}).

\begin{lemma}[Decay Estimate]\label{lem:decay-estimate}
  Let $\vf \in \Sc$ and $t \in \R$. Then there
  exists $C >0$ which only depends on $t$ and on some semi-norm of $\vf$ in $\Sc$ and such that 
  \[
  \abs {(\G(t) \vf)(x)} \leq C \pppg x^{-2}.
  \]
\end{lemma}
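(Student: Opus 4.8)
The plan is to estimate $(\G(t)\vf)(x)$ by splitting according to the sign of $\g$ and exploiting the oscillatory/decaying structure of the kernel. Since $\G(t)^*=\G(-t)$ it suffices to treat $t>0$; the case $t<0$ follows by taking conjugates. The target bound $\pppg{x}^{-2}$ means we must gain two negative powers of $x$, which we will extract by repeated integration by parts using the operators $L_x$, $L_s$ from \eqref{def-op-L} (for $\g>0$, or for the $\G_1$ piece when $\g<0$), and directly from the exponential decay $e^{-\abs\g(\abs x+\abs y)/4}$ for the $\G_2$ piece.

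First I would handle $\g>0$. Here
\[
(\G(t)\vf)(x) = -\frac\g2 \int_\R \int_0^{+\infty} e^{-\frac{\g s}2} K_0(t,s+\abs x+\abs y)\,\vf(y)\,ds\,dy.
\]
For $\abs x\le 1$ one simply bounds everything crudely: $\abs{K_0(t,\cdot)}\le (4\pi\abs t)^{-1/2}$, the $s$-integral converges, and $\int_\R\abs{\vf(y)}\,dy$ is a seminorm of $\vf$, giving a constant; since $\pppg x\le\sqrt2$ this is $\le C\pppg x^{-2}$. For $\abs x\ge 1$ I would integrate by parts twice in $s$ using $L_s$, which leaves $K_0(t,s+\abs x+\abs y)$ invariant, transferring derivatives onto the factor $e^{-\g s/2}$ and the weight $(s+\abs x+\abs y)^{-1}$ via $L_s^*$. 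Each application of $L_s^*$ produces a factor bounded by $C_t(s+\abs x+\abs y)^{-1}\le C_t(\abs x)^{-1}$ (plus lower-order terms from differentiating $e^{-\g s/2}$ which only improve the $s$-decay). After two such steps we gain $(\abs x)^{-2}$, the remaining $s$-integral still converges because of $e^{-\g s/2}$, and the $y$-integral of $\abs{\vf}$ is a seminorm; boundary terms at $s=0$ and $s=+\infty$ vanish or contribute similarly. Thus $\abs{(\G(t)\vf)(x)}\le C\abs x^{-2}\le C\pppg x^{-2}$.

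For $\g<0$ I would use the decomposition $\G(t)=\G_1(t)+\G_2(t)$ of Lemma \ref{lem-decomposition-G}. The $\G_2$ term has kernel $-\frac{\abs\g}2 e^{i\g^2t/4}e^{-\abs\g(\abs x+\abs y)/4}\,g\!\left(t,\frac{\abs x+\abs y}{4\sqrt t}\right)$; since $g$ is bounded on $(0,T]\times\R_+$ by part (ii) of that lemma, $\abs{\G_2(t,x,y)}\le C_t e^{-\abs\g\abs x/4}e^{-\abs\g\abs y/4}$, so $\abs{(\G_2(t)\vf)(x)}\le C_t e^{-\abs\g\abs x/4}\int_\R e^{-\abs\g\abs y/4}\abs{\vf(y)}\,dy \le C\pppg x^{-2}$ trivially, the exponential beating any polynomial. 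For $\G_1(t,x,y)=-\frac{\abs\g}2\int_0^{(\abs x+\abs y)/2}e^{-\abs\g s/2}K_0(t,s-\abs x-\abs y)\,ds$, the point emphasized after the decomposition lemma is that on the range $0\le s\le\frac{\abs x+\abs y}2$ one has $\abs{s-\abs x-\abs y}\ge\frac{\abs x+\abs y}2\ge\frac{\abs x}2$, so $(s-\abs x-\abs y)$ never vanishes and one can repeat the $\g>0$ argument verbatim with $(s+\abs x+\abs y)$ replaced by $(s-\abs x-\abs y)$: two integrations by parts in $s$ using the modified $L_s$ operator gain $\abs{s-\abs x-\abs y}^{-2}\le C\abs x^{-2}$, boundary terms at $s=0$ and $s=\frac{\abs x+\abs y}2$ are controlled (the latter using that $K_0$ and its $s$-derivatives are bounded in modulus and the exponential $e^{-\abs\g s/2}$ is $\le1$), and the $y$-integral of $\abs\vf$ is a seminorm. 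The main obstacle is the bookkeeping for $\g<0$: verifying that the boundary term at the moving endpoint $s=\frac{\abs x+\abs y}2$ of $\G_1$ genuinely carries the required decay rather than spoiling it — but this is exactly where the lower bound $\abs{s-\abs x-\abs y}\ge\frac{\abs x+\abs y}2$ at that endpoint saves us, together with the fact that at $s=\frac{\abs x+\abs y}2$ the $\G_1$ and $\G_2$ pieces are designed to match up. Collecting the bounds from all cases gives the claimed estimate with $C$ depending only on $t$ and finitely many seminorms of $\vf$.
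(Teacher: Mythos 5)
Your overall architecture matches the paper's (reduce to $t>0$ by $\G(-t)=\G(t)^*$, crude $L^\infty$ bound for small $\abs x$, the $\G_1+\G_2$ decomposition for $\g<0$ with exponential decay handling $\G_2$ and the lower bound $\abs{s-\abs x-\abs y}\geq\tfrac{\abs x+\abs y}2$ handling $\G_1$), but the core estimate has a genuine gap: two integrations by parts purely in $s$ do not yield $\abs x^{-2}$. The boundary term at $s=0$ produced by the \emph{first} $s$-integration by parts is
\[
\frac{2it}{\abs x+\abs y}\,K_0(t,\abs x+\abs y),
\]
whose modulus is exactly $\sqrt{t/\pi}\,(\abs x+\abs y)^{-1}$. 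This term contains no remaining $s$-integral, so your second integration by parts in $s$ never touches it, and taking absolute values in the $y$-integral gives only
\[
\abs*{\int_\R \frac{K_0(t,\abs x+\abs y)}{\abs x+\abs y}\,\vf(y)\,dy} \lesssim \frac{\nr{\vf}_{L^1}}{\abs x},
\]
one power short of the target. The claim that boundary terms ``contribute similarly'' is therefore false for this one. The same defect reappears in your treatment of $\G_1$ when $\g<0$: the $s=0$ boundary term there is $O(\abs x^{-1})$ as well (only the boundary term at the moving endpoint $s=\tfrac{\abs x+\abs y}2$ is saved, by the exponential factor $e^{-\abs\g(\abs x+\abs y)/4}$).

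To extract the second power of $\abs x^{-1}$ from that boundary term you must exploit the oscillation of $K_0$ in $y$, i.e., integrate by parts in $y$. This is precisely what the paper does: since the kernel depends on $(x,y)$ only through $\abs x+\abs y$, one first applies $L_y$ (one integration by parts in $y$, whose own boundary term at $y=0$ produces the $\vf(0)$ contributions) and then $L_s$ (one integration by parts in $s$); after this $y$-then-$s$ scheme every term in the resulting four-term decomposition \eqref{decomp-Gt-phi}, boundary terms included, carries two factors of order $t/\abs x$, e.g.\ the fully explicit term $4\g t^2\vf(0)K_0(t,\abs x)/\abs x^2$. Your argument can be repaired either by adopting this $y$-then-$s$ order, or by performing an additional non-stationary-phase integration by parts in $y$ on the offending $s=0$ boundary term (the phase $-(\abs x+\abs y)^2/4t$ has $y$-derivative of modulus at least $\abs x/2t$); but as written the proof does not close.
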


Some computations are common to the proofs of Lemma \ref{lem:decay-estimate} and Proposition \ref{prop-Gamma}.

\begin{proof} [Proof of Lemma \ref{lem:decay-estimate}]
  Since $\G(-t) = \G(t)^*$, it is enough to prove the result for $t >
  0$. Fix such a $t>0$ and
  let $\vf \in \Sc$. We first observe that $\G(t) \vf \in L^{\infty}(\R)$ with 
  \begin{equation} \label{nr-infty-Gt}
    \nr{\G(t) \vf}_{L^{\infty}} \lesssim \nr{\vf}_{L^1}.
  \end{equation}

  Assume that $\g > 0$ and let $x \in \R\setminus\{0\}$. 
  After an integration by parts in $y$, we obtain
  \begin{align*}
    \big(\G(t) \vf\big) (x)
    & = -\frac\gamma2\int_{\R} \int_0^{+\infty}  e^{-\frac {\g s} 2} K_0 (t,s + \abs x + \abs y)  L_y(t,s,x,y)^* \vf  (y) \, ds \, dy\\
    & \quad - 2it\g \vf(0) \int_0^{+\infty} e^{-\frac {\g s} 2} \frac {K_0(t,s + \abs x)}{s+\abs x} \, ds,
  \end{align*}
  where $L_y(t,s,x)^*$ is similar to $L_x(t,s,x)^*$ defined in \eqref{def-L-star}.
  After a similar integration by parts with respect to $s$ we get
  \begin{equation} \label{decomp-Gt-phi}
    \begin{aligned}
      \big(\G(t) \vf\big) (x)
      & = -\frac\gamma2\int_{\R} \int_0^{+\infty}   K_0 (t,s + \abs x + \abs y) L_s^* \big(e^{-\frac {\g s} 2} L_y^* \vf \big)  (y) \, ds \, dy\\
      & \quad - i\g t \int_{\R} \frac {K_0(t,\abs x + \abs y)}{\abs x + \abs y}L_y^* \vf(y) \, dy\\
      & \quad - 2it\g \vf(0) \int_0^{+\infty}  K_0(t,s + \abs x) L_s^* \left(\frac {e^{-\frac {\g s} 2}}{s+\abs x} \right) \, ds\\
      & \quad + 4\g t^2\vf(0) \frac {K_0(t,|x|)}{|x|^2}.
    \end{aligned}
  \end{equation}
  Using the fact that $L_s^*$ and $L_y^*$ are both of order $t$ and
  $|x|^{-1}$, this proves that
  \[
  \abs{\big(\G(t) \vf\big) (x)} \lesssim \frac {t^{3/2}}{x^2} \big( \abs {\vf(0)} + \nr{\vf}_{W^{1,1}} \big).
  \]
  With \eqref{nr-infty-Gt} to control the small values of $|x|$, this concludes the proof when $\gamma>0$. 

  Now assume that $\g < 0$. We use the decomposition of Lemma
  \ref{lem-decomposition-G}. For $\G_1$ we proceed as above and use that
  $\abs{s - \abs x - \abs y} \geq \frac {\abs x + \abs y}2$ when $s \leq
  \frac {\abs x + \abs y}2$. The additional boundary terms given when
  integrating by parts in $s$ are exponentially small in $|x|$ since they  contain $e^{-|\gamma|(|x|+|y|)/4}$. For $\G_2$ we have by Lemma \ref{lem-decomposition-G}:
  \[
  \abs{\big(\G_2(t) \vf\big) (x)} \lesssim e^{-\frac {\abs \g \abs x}4} \nr{\vf}_{L^{\infty}}.
  \]
  This concludes the proof in the case $\gamma<0$.
\end{proof}

\begin{proof}[Proof of Proposition \ref{prop-Gamma}]
  As above, since $\G(-t) = \G(t)^*$ it is enough to prove the result for $t \geq
  0$. 
  Let $\vf \in C_0^{\infty}(\R\setminus\{0\})$. For $t >0$ we set
  \[
  u(t) = e^{-itH_0} \vf + \G(t) \vf \in L^2 .
  \]

  We first show that $u$ is continuous at $0$ and satisfies the
  equation pointwise. 
 For $\g > 0$, \eqref{decomp-Gt-phi} now gives 
  \[
  \abs{\big(\G(t) \vf\big) (x)} \lesssim \frac {t^{3/2}}{\pppg x^2} \nr{\vf}_{W^{1,1}},
  \]
  from which we infer that
  \begin{equation} \label{lim-Gt-L2}
    \lim_{ t\to 0}\nr{\G(t) \vf}_{L^2}= 0.
  \end{equation}
  Let us prove that \eqref{lim-Gt-L2} also holds if $\g < 0$. We
  proceed similarly as when $\gamma>0$ for the contribution of $\G_1$. For the contribution
  of $\G_2$ we consider $t_0 > 0$ such that $\abs y \geq 4 t_0 ^{1/4}$
  for all $y \in \supp \vf$. Then for $t \in (0,t_0]$, $x \in \R$ and $y
  \in \supp(\vf)$ we have $\frac {\abs x + \abs y}{4 \sqrt t} \geq
  t^{-1/4}$, so thanks to Lemma \ref{lem-decomposition-G} we have
  $\G_2(t)\vf \to 0$ in $L^2(\R)$ as $t\to 0$.
  In particular we have $u(t) \to \vf$ as $t \to 0$ in $L^2(\R)$.

The map $u_0 : (t,x) \mapsto (e^{-itH_0} \vf) (x)$ is smooth on $(0,+\infty) \times \R$ and satisfies 
  \[
  (i \partial_t + \partial_{xx}) u_0 = 0.
  \]
  Using differentiation under the integral sign and straightforward computations, we can check that the map $u_\G : (t,x) \mapsto (\G(t) \vf)(x)$ is smooth on $(0,+\infty) \times \R\setminus\{0\}$ with
  \[
  (i \partial_t + \partial_{xx}) u_\G = 0 \quad \text{on } (0,+\infty) \times \R\setminus\{0\}.
  \]
  This implies that the same holds for $u$.

  We claim that for all $t \in
  (0,+\infty)$ the following jump condition is verified
  \begin{equation}\label{eq:jump-in-the-proof}
    \partial_x u(t,0^+)-\partial_x u(t,0^-)=\gamma u(t,0).
  \end{equation}
  Let us make the computations to prove \eqref{eq:jump-in-the-proof}  in the case $\gamma>0$, the
  case $\gamma<0$ being similar. We first remark that for the
  unperturbed part we have
  \begin{equation}
    \label{eq:unperturbed-jump}
    (\partial_x e^{-itH_0}\vf) (0^+) - (\partial_xe^{-itH_0}\vf) (0^-) =0.
  \end{equation}
  For the singular part, we have
  \begin{multline*}
    \left(\partial_x \G(t) \vf \right)(0^\pm) =\int_{\R}\partial_x
    \Gamma(t,0^\pm,y)\vf(y) dy\\
    =\mp\frac{\gamma}{2}\int_{\R}
    \int_0^{+\infty}e^{-\frac{\gamma s}{2}}\frac{s+|y|}{2 i
      t}K_0(t,s+|y|)\vf(y)dsdy.
  \end{multline*}
  Therefore, 
  \begin{equation*}
    (\partial_x \G(t) \vf )(0^+) - (\partial_x \G(t) \vf) (0^-) =-\gamma\int_{\R}
    \int_0^{+\infty}e^{-\frac{\gamma s}{2}}\frac{s+|y|}{2 i
      t}K_0(t,s+|y|)\vf(y)dsdy.
  \end{equation*}
  We recognize that $\partial_s K_0(t,s+|y|)=\frac{s+|y|}{2 i
    t}K_0(t,s+|y|)$, so after an integration by parts in $s$ we obtain
  \begin{eqnarray*}
\lefteqn{    (\partial_x \G(t) \vf) (0^+) - (\partial_x \G(t) \vf) (0^-)} \\
&& = \gamma\int_{\R} K_0(t,|y|)\vf(y)dy -\gamma\frac\gamma2\int_{\R} \int_0^{+\infty}e^{-\frac{\gamma s}{2}}K_0(t,s+|y|)\vf(y)dsdy\\
&& = \gamma (e^{-itH_0}\vf  )(0)+\gamma (\G(t) \vf) (0)=\gamma u(t,0).
\end{eqnarray*}
Combined with \eqref{eq:unperturbed-jump}, this proves \eqref{eq:jump-in-the-proof}.

  We now identify $e^{-itH_\gamma}\vf$ with $u(t)$. 
  Let $\p_0 \in C_0^{\infty}(\R\setminus\{0\}) \subset  D(\Hg)$ and $\p : (t,x) \mapsto (e^{-it\Hg}\p_0)(x)$. For $t \neq 0$ we have
  \begin{eqnarray*}
    \lefteqn{\frac d {dt}  \innp{u(t)}{\p(t)}_{L^2} = \innp{u_t(t)}{\p(t)} -  \innp{u(t)}{i\Hg \p(t)}} \\
 && =  \innp{iu_{xx}(t)}{\p(t)} + \innp{u(t)}{i\p_{xx}(t)} \\
 && = \Re\left( \left(- i u_x(t,0^+) + i u_x(t,0^-) \right)\overline {\p(t,0)}  +  i u(t,0) \left(\overline{\p_x(t,0^+)} - \overline{\p_x(t,0^-)} \right)\right)\\
 && =  \Re\left(- i\g u(t,0) \overline{\p(t,0)} + i\g u(t,0) \overline{\p(t,0)}\right)\\
 && = 0.
  \end{eqnarray*}
  Since the map $t \mapsto \innp{u(t)}{\p(t)}_{L^2}$ is continuous at $t = 0$ this proves that for all $t \in \R$ we have
  \begin{align*}
    \innp{e^{it\Hg}u(t)}{ \p_0} = \innp{u(t)}{\p(t)} =\innp{\vf}{\p_0}   .
  \end{align*}
  Since $C_0^{\infty}(\R\setminus\{0\})$ is dense in $L^2(\R)$ we obtain
  \[
  e^{it\Hg}u(t) = \vf,
  \]
  and hence 
  \[
  u(t) = e^{-it\Hg} \vf.
  \]
  Since $e^{-it\Hg}$ is continuous on $L^2(\R)$, this concludes the proof.
\end{proof}

\begin{remark}
  With Proposition \ref{prop-Gamma} and Lemma \ref{lem:decay-estimate} we obtain that for $\vf \in \Sc$ and $t \in \R$ we have 
  \[
  \abs{e^{-it\Hg} \vf (x)} \lesssim \pppg x^{-2}.
  \]
\end{remark}

\subsection{\texorpdfstring
 {The Linear Evolution in $H^1(\mathbb R)$}
 {The Linear Evolution in H1}
}

Having identified the propagator $e^{-itH_\gamma}$ on $L^2(\R)$, we
now describe its action on $H^1(\R)$. The situation here is quite
different from the case $\gamma=0$, where it follows from the
semi-group theory that $e^{-itH_0}$ defines an isometry on
$H^1(\R)$. We nevertheless can prove the following result:

\begin{proposition}[Action of the Propagator on
  $H^1(\R)$]  \label{prop-lin-evol-H1}
  The following assertions hold.
  \begin{enumerate}[(i)]
  \item Let $w \in H^1(\R)$. Then $e^{-it\Hg} w \in H^1(\R)$ for all $t \in \R$ and the map $t \mapsto e^{-it\Hg} w$ is continuous on $\R$. 
  \item Let $T >0$. Then there exists $C >0$ such that for all $w \in H^1(\R)$ and $t \in [-T,T]$ we have 
    \[
    \nr{e^{-it\Hg} w}_{H^1} \leq C \nr{w}_{H^1}.
    \]
  \end{enumerate}
\end{proposition}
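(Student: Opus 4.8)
The plan is to bypass the explicit kernel computations entirely and argue by functional calculus, using the fact — built into the very definition of $\Hg$ — that the form domain of $\Hg$ is $H^1(\R)$. Fix once and for all a constant $C > \g^2/2$, so that $\Hg + C > 0$ and the operator $(\Hg+C)^{1/2}$ is well defined with $D\big((\Hg+C)^{1/2}\big) = H^1(\R)$ (the form domain of $\Hg$) and $\nr{(\Hg+C)^{1/2}w}_{L^2}^2 = q_\g(w,w) + C\nr w_{L^2}^2$ for $w \in H^1(\R)$. The first thing I would record is that this graph norm is equivalent to $\nr w_{H^1}$ on $H^1(\R)$. The upper bound follows from the one-dimensional inequality $\nr v_{L^\infty}^2 \leq \nr v_{L^2}\nr{v'}_{L^2}$ already used in the proof of Lemma \ref{lem-abs-Ec} (to control $\g\abs{w(0)}^2$). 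For the lower bound the only issue is the sign of $\g\abs{w(0)}^2$ when $\g<0$: one then writes $\g\abs{w(0)}^2 \geq -|\g|\nr w_{L^2}\nr{w'}_{L^2} \geq -\tfrac12\nr{w'}_{L^2}^2 - \tfrac{\g^2}2\nr w_{L^2}^2$, so that $q_\g(w,w) + C\nr w_{L^2}^2 \geq \tfrac12\nr{w'}_{L^2}^2 + (C-\tfrac{\g^2}2)\nr w_{L^2}^2 \gtrsim \nr w_{H^1}^2$.

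With this in hand, part (i) is immediate from the spectral theorem. The bounded operator $e^{-it\Hg}$ commutes with every Borel function of $\Hg$, in particular with $(\Hg+C)^{1/2}$; hence it maps $D\big((\Hg+C)^{1/2}\big) = H^1(\R)$ into itself and $(\Hg+C)^{1/2}e^{-it\Hg}w = e^{-it\Hg}(\Hg+C)^{1/2}w$ for $w \in H^1(\R)$. Thus $e^{-it\Hg}w \in H^1(\R)$, and since $(\Hg+C)^{1/2}w \in L^2(\R)$ and $t \mapsto e^{-it\Hg}$ is strongly continuous on $L^2(\R)$, the map $t \mapsto (\Hg+C)^{1/2}e^{-it\Hg}w = e^{-it\Hg}(\Hg+C)^{1/2}w$ is continuous with values in $L^2(\R)$; by the norm equivalence, $t \mapsto e^{-it\Hg}w$ is continuous with values in $H^1(\R)$.

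For part (ii), the same commutation relation gives $\nr{(\Hg+C)^{1/2}e^{-it\Hg}w}_{L^2} = \nr{(\Hg+C)^{1/2}w}_{L^2}$; subtracting $C\nr{e^{-it\Hg}w}_{L^2}^2 = C\nr w_{L^2}^2$ (the group is unitary on $L^2(\R)$) this means $q_\g(e^{-it\Hg}w, e^{-it\Hg}w) = q_\g(w,w)$, i.e.
\[
\nr{(e^{-it\Hg}w)'}_{L^2}^2 + \g\,\abs{(e^{-it\Hg}w)(0)}^2 = \nr{w'}_{L^2}^2 + \g\,\abs{w(0)}^2 .
\]
If $\g>0$ the boundary term on the left is nonnegative, so $\nr{(e^{-it\Hg}w)'}_{L^2}^2 \leq \nr{w'}_{L^2}^2 + \g\abs{w(0)}^2 \lesssim_\g \nr w_{H^1}^2$, and with $\nr{e^{-it\Hg}w}_{L^2} = \nr w_{L^2}$ we get $\nr{e^{-it\Hg}w}_{H^1} \lesssim_\g \nr w_{H^1}$. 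If $\g<0$, set $X = \nr{(e^{-it\Hg}w)'}_{L^2}$; dropping the nonpositive term $-|\g|\abs{w(0)}^2$ and using $\abs{(e^{-it\Hg}w)(0)}^2 \leq \nr{e^{-it\Hg}w}_{L^2}\nr{(e^{-it\Hg}w)'}_{L^2} = \nr w_{L^2}X$ one obtains $X^2 \leq \nr{w'}_{L^2}^2 + |\g|\nr w_{L^2}X$, hence $X \leq |\g|\nr w_{L^2} + \nr{w'}_{L^2} \lesssim_\g \nr w_{H^1}$, and again $\nr{e^{-it\Hg}w}_{H^1} \lesssim_\g \nr w_{H^1}$. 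Note that these bounds actually hold for all $t \in \R$ with a constant independent of $t$, which is slightly stronger than what is stated.

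The only real subtlety is the indefiniteness of $q_\g$ for $\g<0$: the boundary term $\g\abs{w(0)}^2$ must be absorbed using $\nr v_{L^\infty}^2 \leq \nr v_{L^2}\nr{v'}_{L^2}$, once for the norm equivalence and once to close the quadratic inequality for $X$. For completeness I note that a more computational route is available — write $e^{-it\Hg} = e^{-itH_0} + \G(t)$ (Proposition \ref{prop-Gamma}), use that $e^{-itH_0}$ is an $H^1$-isometry, and show from the explicit kernels that $\G(t)$ maps $L^2(\R)$ into $H^1(\R)$; differentiating under the integral sign as in the proof of Lemma \ref{lem:decay-estimate} reduces the claim to the $L^2$-boundedness (uniform for $t$ in compact intervals) of a few oscillatory integral operators which, after factoring out their Gaussian phases, are essentially restrictions of the Fourier transform — correct, but noticeably longer, especially for $\g<0$ where one must split $\G = \G_1+\G_2$.
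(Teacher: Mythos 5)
Your proof is correct, and it takes a genuinely different route from the paper. The paper proves this proposition by writing $e^{-it\Hg} = e^{-itH_0} + \G(t)$ (Proposition \ref{prop-Gamma}), using that $e^{-itH_0}$ is an $H^1$-isometry, and then establishing the $H^1$-boundedness and strong continuity of $\G(t)$ (Lemma \ref{lem-Gamma-H1}) by explicit work on the kernel: integrations by parts with the operators \eqref{def-op-L} for $\g>0$, and the splitting $\G = \G_1+\G_2$ of Lemma \ref{lem-decomposition-G} for $\g<0$, with a separate treatment of the boundary term $\G(t,\cdot,0)\vf(0)$. Your argument instead exploits that $\Hg$ is \emph{defined} as the operator associated with the closed, bounded-below form $q_\g$ with form domain $H^1(\R)$: by the second representation theorem, $D\big((\Hg+C)^{1/2}\big)=H^1(\R)$ with equivalent norms (your absorption of $\g\abs{w(0)}^2$ via $\nr{v}_{L^\infty}^2\leq\nr v_{L^2}\nr{v'}_{L^2}$ is exactly right, and your choice $C>\g^2/2$ does make $\Hg+C$ strictly positive since the same inequality gives $\Hg\geq-\g^2/2$), and then everything follows from the commutation of $e^{-it\Hg}$ with $(\Hg+C)^{1/2}$ and strong continuity of the group on $L^2(\R)$. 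This is shorter, cleaner, and yields the slightly stronger conclusion that the constant in (ii) is independent of $T$; it would even give Lemma \ref{lem-Gamma-H1} as a corollary, since $\G(t)=e^{-it\Hg}-e^{-itH_0}$. What it does not buy is the harder extension to data that are not in $L^2(\R)$: the key step for the Cauchy theory is Lemma \ref{lem-Gamma-dotH1}, where $\G(t)$ must be applied to $u\in\dot H^1(\R)$ (defined only by duality) and shown to land in $H^1(\R)$, and there the spectral calculus is unavailable — the kernel decomposition and the Hardy-inequality estimates are genuinely needed. So the paper's heavier machinery for this proposition is best read as a byproduct of tools that are indispensable two lemmas later, while your argument is the efficient proof of this particular statement.
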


Proposition \ref{prop-lin-evol-H1} is a direct consequence of the
description of the propagator $e^{-itH_\gamma}=e^{itH_0}+\Gamma(t)$
given in Proposition \ref{prop-Gamma}, the fact that the result is already known if $\g = 0$ and the following
result.

\begin{lemma}[Action of $\Gamma$ on $H^1(\R)$] \label{lem-Gamma-H1}
  Let $T > 0$. There exists $C > 0$ such that for $t \in [-T,T]$ and $w \in  H^1(\R)$ we have $\G(t) w \in H^1(\R)$ and 
  \[
  \nr{\G(t) w}_{H^1} \leq C \nr {w}_{H^1}.
  \]
  Moreover the map $t \mapsto \G(t) w$ is continuous from $\R$ to $H^1(\R)$.
\end{lemma}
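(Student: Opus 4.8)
The plan is to prove Lemma \ref{lem-Gamma-H1} by analyzing the kernel $\G(t,x,y)$ directly, treating the cases $\g > 0$ and $\g < 0$ separately, and in the latter case using the decomposition $\G(t) = \G_1(t) + \G_2(t)$ from Lemma \ref{lem-decomposition-G}. The key structural observation is that for $\g > 0$ the kernel $\G(t,x,y)$ depends on $x$ and $y$ only through $\abs x + \abs y$, and likewise for $\G_1$, while $\G_2$ has its own exponential decay. Consequently, for the derivative we have $\partial_x \G(t,x,y) = \sign(x)\, \partial_\rho [\text{kernel}]\big|_{\rho = \abs x + \abs y}$, which relates $\G(t) w'$ (after integrating by parts in $y$, where $\partial_y$ also produces a $\sign(y)$) to $\partial_x \G(t) w$. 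The strategy for the $L^2 \to L^2$ bound on $\G(t)$ (already available from Proposition \ref{prop-Gamma}, since $\G(t) = e^{-it\Hg} - e^{-itH_0}$ is bounded on $L^2(\R)$ with norm controlled for $t$ in a compact interval) is then leveraged to control both $\nr{\G(t) w}_{L^2}$ and, via the kernel identity, $\nr{(\G(t) w)'}_{L^2}$ in terms of $\nr{w}_{L^2}$ and $\nr{w'}_{L^2}$.

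First I would establish the $L^2$ estimate $\nr{\G(t) w}_{L^2} \leq C \nr{w}_{L^2}$ for $t \in [-T,T]$, which is immediate from Proposition \ref{prop-Gamma} together with the uniform boundedness of $e^{-it\Hg}$ (unitary) and $e^{-itH_0}$ (unitary) on $L^2(\R)$. Next I would differentiate: writing $(\G(t) w)(x) = \int_\R \G(t,x,y) w(y)\,dy$, I compute $\partial_x (\G(t) w)(x)$ for $x \neq 0$. Because the kernel has the form $\sign(x)$-times-even-in-$x$ structure through $\abs x$, a careful integration by parts in $y$ — exploiting that $\partial_y \G(t,x,y) = \sign(y)\,\partial_x \G(t,x,y)\cdot \sign(x)$ for $x,y \neq 0$ and handling the boundary term at $y = 0$ — will express $\partial_x(\G(t) w)$ as $\pm\G(t)(w')$ plus a term proportional to $w(0)$ times $\partial_x\G(t,x,0)$. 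For $\g > 0$ the latter integral $\int_0^\infty e^{-\g s/2} \partial_\rho K_0(t, s + \abs x)\,ds$ can be handled by integrating by parts in $s$ (using that $\partial_s K_0(t, s+\abs x) = \partial_\rho K_0$), producing a kernel that is bounded in $L^2_x$ uniformly for $t \in [-T,T]$, and $\abs{w(0)} \lesssim \nr{w}_{H^1}$ by Sobolev embedding. This yields $\nr{(\G(t) w)'}_{L^2} \lesssim \nr{w'}_{L^2} + \nr{w}_{H^1} \lesssim \nr{w}_{H^1}$.

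For $\g < 0$, I would apply the same scheme to $\G_1$ — here $\abs{s - \abs x - \abs y} \geq \frac{\abs x + \abs y}{2}$ on the truncated domain $s \leq \frac{\abs x + \abs y}{2}$, so the singularity of $K_0$ is never reached and the integrations by parts in $s$ go through, with the new boundary terms at $s = \frac{\abs x + \abs y}{2}$ carrying the factor $e^{-\abs\g(\abs x + \abs y)/4}$, hence giving kernels with exponential decay that are trivially bounded on $L^2$. For $\G_2$, whose kernel is $-\frac{\abs\g}{2} e^{i\g^2 t/4} e^{-\abs\g(\abs x + \abs y)/4} g\big(t, \frac{\abs x + \abs y}{4\sqrt t}\big)$, I would use part (ii) of Lemma \ref{lem-decomposition-G} (boundedness of $g$ on $(0,T] \times \R_+$) to see that both $\G_2(t,x,y)$ and its $x$-derivative are dominated by $C e^{-\abs\g(\abs x + \abs y)/8}$ (the extra derivative either hits the exponential, giving $\abs\g/4$, or hits $g$ through its $\rho$-argument, where one needs a bound on $\partial_\rho g$ — obtainable from the explicit integral formula for $g$, noting $\partial_\rho g$ again decays appropriately); this gives an operator with an $L^1$-in-$(x,y)$ kernel, hence bounded $L^2 \to L^2$ and $L^2 \to H^1$ uniformly in $t \in (0,T]$. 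Finally, continuity of $t \mapsto \G(t) w$ in $H^1(\R)$ follows by first proving it for $w$ in the dense subspace $C_0^\infty(\R\setminus\{0\})$ — where one can differentiate under the integral sign and invoke continuity of the kernel and dominated convergence, using the decay estimates from the proof of Lemma \ref{lem:decay-estimate} — and then extending to all $w \in H^1(\R)$ by the uniform bound just established.

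The main obstacle I anticipate is the case $\g < 0$, specifically controlling $\partial_x \G_2(t) w$ uniformly for \emph{small} $t > 0$: the argument $\rho = \frac{\abs x + \abs y}{4\sqrt t}$ of $g$ blows up as $t \to 0$, and differentiating in $x$ brings down a factor $\frac{1}{\sqrt t}$, which must be absorbed. This requires the finer information that $g(t,\rho) \to 0$ as $t \to 0, \rho \to \infty$ (Lemma \ref{lem-decomposition-G}(ii)) together with quantitative decay of $g$ and $\partial_\rho g$ in $\rho$ — essentially, $\frac{1}{\sqrt t}$ must be beaten by either the exponential $e^{-\abs\g(\abs x + \abs y)/4} = e^{-\abs\g \rho\sqrt t}$ or by polynomial decay of $g$ in $\rho$; a splitting into the regions $\rho \sqrt t \lesssim 1$ and $\rho \sqrt t \gtrsim 1$ (equivalently $\abs x + \abs y \lesssim \sqrt t$ versus $\gtrsim \sqrt t$) should resolve it, with the small-$\rho$ region handled by the $L^2$ bound already proved and the large-$\rho$ region by the exponential decay. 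Everything else is a matter of routine but careful integration by parts tracking powers of $t$ and negative powers of $\abs x + \abs y$.
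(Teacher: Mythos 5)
Your overall strategy is the paper's: the $L^2$ bound comes for free from Proposition \ref{prop-Gamma}, and the derivative is computed through the identity $\partial_x\G(t,x,y)=\sign(x)\sign(y)\partial_y\G(t,x,y)$ followed by an integration by parts in $y$, with the decomposition of Lemma \ref{lem-decomposition-G} for $\g<0$. But the boundary term you extract from that integration by parts is wrong, and the error matters. The correct identity is
\[
(\G(t)w)'(x) = -\sign(x)\big(\G(t)(\sign(\cdot)\,w')\big)(x) - 2\sign(x)\,\G(t,x,0)\,w(0),
\]
i.e.\ the boundary contribution involves the kernel itself evaluated at $y=0$, not $\partial_x\G(t,x,0)$ as you write. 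With your version, the plan of ``integrating by parts in $s$'' applied to $\int_0^{+\infty}e^{-\g s/2}\partial_\rho K_0(t,s+\abs x)\,ds$ produces the boundary value $K_0(t,\abs x)$ at $s=0$, whose modulus is the constant $(4\pi\abs t)^{-1/2}$; this function is not in $L^2_x(\R)$, so the claimed ``kernel bounded in $L^2_x$ uniformly for $t\in[-T,T]$'' is false and that step collapses as written.

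Even once the boundary term is corrected to $\G(t,\cdot,0)\,w(0)$, showing that $\G(t,\cdot,0)$ lies in $L^2(\R)$ with norm uniformly bounded on $[-T,T]$ and depending continuously on $t$ (in particular as $t\to0$) is the real crux of the lemma, and your proposal does not supply this for the $\g>0$ case or for $\G_1$. The paper gets it from the flexible estimate $\abs{\G(t,x,0)}\lesssim \b/\sqrt t+\sqrt t/(\abs x+\b)$, valid for every $\b\geq 0$ (obtained with the operator $L_s$ of \eqref{def-op-L}, not a plain $\partial_s$ integration by parts, precisely so that no constant-modulus boundary value appears), and then splits the $x$-integral at $\abs x=\sqrt t$, taking $\b=\sqrt t$ and $\b=0$ on the two pieces to get $\nr{\G(t,\cdot,0)}_{L^2}^2\lesssim\sqrt t$; a similar splitting at $\abs x=t^{1/4}$ handles $\G_2(t,\cdot,0)$. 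Incidentally, the ``main obstacle'' you anticipate --- controlling $\partial_x$ of the kernel of $\G_2$ and $\partial_\rho g$ against the $1/\sqrt t$ loss --- evaporates once the integration-by-parts identity is used consistently: the $x$-derivative of the kernel of $\G_2$ is never needed, only $\G_2(t,x,0)$ is, and for that the uniform bound on $g$ together with $g(t,\rho)\to0$ from Lemma \ref{lem-decomposition-G}(ii) suffices. Your density argument for the continuity of $t\mapsto\G(t)w$ in $H^1(\R)$ is fine once the uniform bound is in place.
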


\begin{proof}
  Let $\vf  \in C_0^{\infty}(\R)$. By Proposition
  \ref{prop-Gamma} we know that the map $t \mapsto \G(t) \vf =
  e^{-it\Hg}\vf - e^{-itH_0} \vf$ is continuous from $\R$ to
  $L^2(\R)$ with $\nr{\G(t) \vf}_{L^2} \leq 2 \nr{\vf}_{L^2}$. Let $t \in \R$ and $x \in \R\setminus\{0\}$. Since $(x,y)
  \mapsto K(t,x,y)$ can be seen as function of $\abs x + \abs y$ we have 
  \begin{align}
    (\G(t) \vf )'(x)
    & = \int_\R \partial_x \G(t,x,y) \vf (y)\, dy = \sign(x) \int_\R \partial_y \G(t,x,y) \sign(y) \vf (y)\, dy \nonumber\\
    & = - \sign(x) (\G(t) (\sign(y) \vf ')) (x) - 2 \sign(x) \G(t,x,0) \vf (0).\label{eq:second-term}
  \end{align}
  By continuity of $\G(t)$ in $L^2(\R)$ we obtain that the first term defines a function in $L^2(\R)$ of size not greater than $2\nr{\vf '}_{L^2}$ and is continuous with respect to $t$. 

  The rest of the proof is devoted to the treatment of  the second term
  in \eqref{eq:second-term}. Since $\abs {\vf (0)} \lesssim \nr{\vf
  }_{H^1}$, it is enough to prove that $t \mapsto \G(t,\cdot,0)$ is
  continuous from $[0,+\infty)$ to $L^2(\R)$ (the continuity on $\R$ will follow by duality).

  First assume that $\g > 0$. 
  For $t > 0$, $x \in \R \setminus \{0\}$ and $\b \geq 0$ we get after an integration by parts with $L_s(t,s,x,0)$
  \begin{multline*}
    \frac \g 2 \int_{\b}^{+\infty} e^{-\frac {\g s} 2} K_0(t,s+\abs x) \, ds\\
    =  \frac {i \g t}{\abs x + \b} e^{-\frac {\g \beta} 2} K_0(t,\b + \abs x) + i\g t \int_\b^{+\infty} K_0(t,s+\abs x) \partial_s \left( \frac {e^{-\frac {\g s}2}} {s + \abs x} \right) \,ds 
  \end{multline*}
  and hence 
  \begin{equation} \label{decomp-G-tx0}
    \abs{\G(t,x,0)} \lesssim \frac \b {\sqrt t} + \frac {\sqrt t}{\abs x + \b}.
  \end{equation}
  Applied with $\beta=1$ for $|x|< 1$ and  $\b = 0$ for $|x|\geq 1$, this proves that $\G(t,\cdot,0)$ belongs to $L^2(\R)$  for $t > 0$ fixed. Moreover the map $t \mapsto
  \G(t,x,0)$ is continuous on $(0,+\infty)$ for all $x \in\R$ and
  \eqref{decomp-G-tx0} is uniform for $t$ in a compact subset of 
  $(0,+\infty)$. By the dominated convergence theorem we obtain that $t \mapsto \G(t,\cdot,0)$ is continuous from $(0,+\infty)$ to $L^2(\R)$. It remains to prove that $\nr{\G(t,\cdot,0)}_{L^2}$ goes to 0 as $t$ goes to 0. For $t > 0$ we write 
  \begin{equation} \label{decomp-nr-G}
    \nr{\G(t,\cdot,0)}_{L^2}^2 = \int_{\abs x \leq \sqrt t} \abs{\G(t,x,0)}^2 \, dx +  \int_{\abs x \geq \sqrt t} \abs{\G(t,x,0)}^2 \, dx.
  \end{equation}
  In these two integrals we apply \eqref{decomp-G-tx0} with $\b = \sqrt t$ and $\b = 0$, respectively. This gives
  \[
  \nr{\G(t,\cdot,0)}_{L^2}^2 \lesssim  \sqrt t  + t \int_{\abs x \geq \sqrt t} \frac 1 {\abs x^2} \, dx \lesssim \sqrt t \limt t 0 0.
  \]
  This proves the result for $\g > 0$.

  Now assume that $\g < 0$. We use the decomposition of Lemma \ref{lem-decomposition-G}. We can check that $\G_1(t,x,0)$ satisfies \eqref{decomp-G-tx0} (the additional boundary term for the integration by parts in harmless), so we conclude as above for the contribution of $\G_1$. For $\G_2$ we use the exponential decay to see that $t \mapsto \G_2(t,\cdot,0)$ is continuous from $(0,+\infty)$ to $L^2(\R)$. For the continuity at $t = 0$ we write 
  \[
  \nr{\G_2(t,\cdot,0)}_{L^2}^2 = \int_{\abs x \leq t^{1/4}} \abs{\G_2(t,x,0)}^2 \, dx +  \int_{\abs x \geq t^{1/4}} \abs{\G_2(t,x,0)}^2 \, dx
  \]
  The first term goes to 0 since $\G_2(t,x,y)$ is uniformly bounded and for the second we use the fact that for $\abs x \geq t^{1/4}$ we have $g \big(t,\abs x/(4\sqrt t) \big) \to 0$ (see Lemma \ref{lem-decomposition-G}). This concludes the proof.
\end{proof}

\subsection{\texorpdfstring{The Linear Evolution in $\Ec$}{The Linear
    Evolution in E}}

In this paragraph we extend $\G(t)$ and hence $e^{-it\Hg}$ to maps on $\Ec$.

We first recall that for $u \in \dot H^1(\R)$ (and in particular for $u \in \Ec$) there exists $C \geq 0$ such that for almost
all $x \in \R$ we have
\[
\abs {u(x)} \leq C \pppg x^{\frac 12}.
\]
Let $t \in \R$. Thanks to Lemma
\ref{lem:decay-estimate} we can define a temperate distribution $\G(t) u$ by
\begin{equation}\label{eq:distributional-sense}
  \forall \vf \in \Sc(\R) ,\quad \innp{\G(t)u}{\vf}_{\Sc',\Sc} = \innp{u}{\G(-t)\vf}.
\end{equation}
Then we can similarly extend $e^{-it\Hg}$. For $u \in \dot H^1(\R)$, the distribution $\tSgHg t u$ is defined by
\[
\forall \vf \in \Sc(\R) , \quad \innp{\tSgHg t u}{\vf} = \innp{u} {e^{it\Hg} \vf}.
\]
Of course, if $u \in H^1(\R)$ we have $\tSgHg t u = e^{-it\Hg} u \in
L^2(\R)$. As for $\Hg$, we choose a different notation to emphasize the difference between the propagator $e^{-itH_\gamma}$ defined on $L^2(\R)$ by the usual theory of selfadjoint operators and the distribution $\tSgHg{t}$ defined by duality. It will appear in the
sequel that $\tSgHg{t}$ enjoys in fact most of the properties of $e^{-itH_\gamma}$.

The following result describes the action of $\Gamma(t)$ on functions
of $\dot H^1(\R)$.

\begin{lemma}[Action of $\Gamma$ on $\dot H^1(\R)$] \label{lem-Gamma-dotH1}
  Let $T > 0$. There exists $C > 0$ such that for $t \in [-T,T]$ and $u \in \dot H^1(\R)$ we have $\G(t) u \in H^1(\R)$ and 
  \[
  \nr{\G(t) u}_{H^1} \leq C \left( \nr {u'}_{L^2} + \abs {u(0)}^2 \right).
  \]
  Moreover the map $t \mapsto \G(t) u$ is continuous from $\R$ to $H^1(\R)$.
\end{lemma}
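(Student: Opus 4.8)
The strategy is to reduce the statement for $u \in \dot H^1(\R)$ to the already-established $H^1(\R)$ result (Lemma \ref{lem-Gamma-H1}) by peeling off the part of $u$ responsible for the non-zero boundary behaviour. Given $u \in \dot H^1(\R)$, write $u = w + \ell$ where $\ell$ is an explicit ``tail function'' that interpolates the boundary data and $w \in H^1(\R)$. Concretely, fix $\chi \in C_0^\infty(\R)$ with $\chi \equiv 1$ near $0$ and set, say, $\ell(x) = u(0)\,(1-\chi(x))\,\cdot(\text{something carrying the growth})$; but in fact the simplest choice is to note that since $u \in \dot H^1$, the function $w := u - u(0)\,\chi_{\ge}(x)$ may still not be in $H^1$ because $u$ itself grows like $\pppg x^{1/2}$. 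The cleaner route is to observe that $\G(t)$ is \emph{linear} and to split according to where the analysis of the kernel is easy: decompose $\G(t)u$ using the explicit kernel formulas and the decomposition of Lemma \ref{lem-decomposition-G} (when $\g<0$), exactly as in the proof of Lemma \ref{lem-Gamma-H1}, but now tracking the growth of $u$ at infinity against the decay of the kernel.

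More precisely, first I would handle the easy structural reduction: write $u = \chi u + (1-\chi)u$ where $\chi \in C_0^\infty$, $\chi \equiv 1$ on a neighbourhood of $0$. The piece $\chi u$ lies in $H^1(\R)$ with $\nr{\chi u}_{H^1} \lesssim \nr{u'}_{L^2} + \abs{u(0)}$, so Lemma \ref{lem-Gamma-H1} applies directly and gives the desired bound and continuity for $\G(t)(\chi u)$. For the remaining piece $v := (1-\chi)u$, which vanishes near $0$ and satisfies $v' \in L^2$ with $\abs{v(x)} \lesssim \pppg x^{1/2}$, I would estimate $\G(t)v$ and $(\G(t)v)'$ directly from the kernel. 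Using \eqref{eq:second-term}–type integration by parts in $y$, the derivative $(\G(t)v)'$ is expressed in terms of $\G(t)(\sign(y)v')$ (handled by $L^2$-boundedness of $\G(t)$, since $v' \in L^2$) plus the boundary term $\G(t,x,0)v(0) = 0$ because $v(0)=0$ — this is precisely why the $\abs{u(0)}^2$ term appears only through the $\chi u$ piece... wait, actually the $\abs{u(0)}^2$ (rather than $\abs{u(0)}$) strongly suggests the quadratic term comes from controlling $\nr{\G(t,\cdot,0)}_{L^2}$ crudely when $u(0)$ is large, or from an $L^\infty \cdot L^2$ product; I would simply carry $\abs{u(0)}$ through the $\chi u$ estimate and absorb it into $\abs{u(0)} \le 1 + \abs{u(0)}^2$ if needed, or more likely the square enters via Lemma \ref{lem-abs-Ec}'s $\nr{u}_{L^\infty}\lesssim 1+\abs u_\Ec^{2/3}$ when later composing — so for the tail piece the key point is the pointwise kernel bound $\abs{\G(t,x,y)} \lesssim t^{3/2}\pppg{x+y}^{-2}$ type estimates, against which $\pppg y^{1/2}$ integrates fine.

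The main obstacle, and where the real work lies, is the tail piece $v=(1-\chi)u$: one must show $\int_\R \G(t,x,y) v(y)\,dy$ and its $x$-derivative are in $L^2_x$ with norms controlled by $\nr{u'}_{L^2}$ (plus lower order), despite $v$ only having the crude growth bound $\abs{v(y)}\lesssim\pppg y^{1/2}$ — note $v \notin L^p$ for any $p$. The resolution: write $v(y) = v(y_0) + \int_{y_0}^y v'$ on intervals, or better, integrate by parts once \emph{in $y$} so that only $v'\in L^2$ appears, using $\partial_y \G(t,x,y) = \sign(y)\partial_x\G(t,x,y)$, thereby reducing everything to $\G(t)$ acting on the $L^2$ function $\sign(\cdot)v'$ plus the boundary contribution at $y=0$ (which vanishes since $v$ vanishes near $0$). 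For $\g<0$ one invokes Lemma \ref{lem-decomposition-G}: $\G_1$ is treated exactly as the $\g>0$ kernel using $\abs{s-\abs x-\abs y}\ge\frac{\abs x+\abs y}{2}$ on the relevant range, and $\G_2$ has an exponential factor $e^{-\abs\g(\abs x+\abs y)/4}$ which trivially dominates $\pppg y^{1/2}$. Continuity in $t$ follows, as in Lemma \ref{lem-Gamma-H1}, from dominated convergence using the locally-uniform-in-$t$ bounds, together with the behaviour as $t\to 0$ established there. I expect the bookkeeping for the boundary terms produced by the integrations by parts in $s$ (in the $\g>0$ and $\G_1$ cases) to be the most delicate point, but they all carry favourable exponential decay in $\abs x$ and cause no genuine difficulty.
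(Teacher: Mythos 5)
Your overall architecture matches the paper's: split $u = \h u + (1-\h)u$ with a cutoff $\h$ equal to $1$ near the origin, apply Lemma \ref{lem-Gamma-H1} to $\h u \in H^1(\R)$, control the derivative of $\G(t)$ applied to the tail via the identity \eqref{eq:second-term} (whose boundary term vanishes because the tail vanishes at $0$), and invoke Lemma \ref{lem-decomposition-G} when $\g<0$. The gap is in the one step where the real difficulty lies: showing that $\G(t)v$ \emph{itself} (not its derivative) belongs to $L^2(\R)$ when $v=(1-\h)u$ only satisfies $v'\in L^2(\R)$ and $\abs{v(y)}\lesssim \pppg{y}^{1/2}$. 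Neither of your two suggestions closes this. The pointwise bound $\abs{\G(t,x,y)}\lesssim t^{3/2}\pppg{\abs x+\abs y}^{-2}$ integrated against $\pppg{y}^{1/2}$ yields only $\abs{(\G(t)v)(x)}\lesssim t^{3/2}\pppg{x}^{-1/2}$, which is not square integrable, so that route does not even give $\G(t)v\in L^2(\R)$. And the identity $\partial_y\G(t,x,y)=\sign(x)\sign(y)\,\partial_x\G(t,x,y)$ is exactly what produces the formula for $(\G(t)v)'$ in terms of $\G(t)(\sign(\cdot)v')$; it does not furnish an antiderivative of the kernel in $y$, so ``integrating by parts in $y$'' cannot reduce $\G(t)v$ itself to $\G(t)$ acting on $\sign(\cdot)v'$. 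Knowing $(\G(t)v)'\in L^2(\R)$ says nothing about $\G(t)v\in L^2(\R)$.

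What the paper actually does for this step is an integration by parts in $x$ in the duality pairing $\innp{v}{\G(-t)\vf}$, using the operator $L_x$ of \eqref{def-op-L}, which leaves $K_0(t,s+\abs x+\abs y)$ invariant. This produces one term containing $v'(x)$ against a kernel which, after a further integration by parts with $L_s$, is of size $t^{3/2}(\abs x+\abs y)^{-2}$ on the support $\abs x\geq 1$ and hence defines a bounded bilinear form on $L^2\times L^2$, plus a remainder containing $v(x)/(s+\abs x+\abs y)^2$, which is controlled via the Hardy inequality $\int \abs{v(x)}/\abs{x}\,dx \lesssim \nr{v'}_{L^2}$ on the relevant region. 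It is this Hardy-type mechanism, converting the growth of $v$ into control by $\nr{v'}_{L^2}$, that is missing from your argument. (A workable alternative in your spirit would be to write $v(y)=\int_0^y v'$, exchange the order of integration, and prove $L^2$-boundedness of the resulting kernel $\int_\s^{+\infty}\G(t,x,y)\,dy$, which is of size $\sqrt t\,(\abs x+\s)^{-1}$, via Hilbert's inequality; but that is a new estimate requiring proof, not an application of the $L^2$-boundedness of $\G(t)$.)
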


\begin{proof}
  We first assume that $u$ vanishes on [-1,1]. With similar
  calculations as in the proof of Lemma \ref{lem-Gamma-H1} we see that for $t > 0$ and $\vf \in C_0^{\infty}(\R\setminus\{0\})$ we have
  \begin{equation} \label{eq-Gammau-phi'}
    \begin{aligned}
      \innp{\G(t) u}{\vf'}
      & = - \Re\int_\R \int_\R u(x) \partial_y \G(t,x,y)  \overline {\vf(y)} \, dy \, dx \\
      & = -\Re \int_\R \int_\R u(x) \sign(x) \partial_x \G(t,x,y) \sign(y) \overline {\vf(y)} \, dy \, dx \\
      & = \innp{ \G(t) (\sign(x) u')} {\sign(y) \vf}.
    \end{aligned}
  \end{equation}
  Since $\G(t)$ is continuous on $L^2(\R)$, this proves that $t \mapsto (\G(t)u)'$ defines a continuous map from $\R$ to $L^2(\R)$ and 
  \[
  \nr{(\G(t)u)'} \leq \nr{u'}_{L^2}.
  \]

  Now assume that $\g > 0$. 
  After an integration by parts with the operator $L_x$ defined in \eqref{def-op-L} we see that $\innp{u}{\G(-t) \vf} =\Re( A_{1}(t) + A_2(t))$ where
  \[
  A_1(t) = -i\g t \int_\R \int_\R \int_0^{+\infty}  \frac {  u'(x) \sign(x) }{s + \abs x + \abs y} e^{-\frac {\g s} 2} K_0(t, s+ \abs x + \abs y) \overline {\vf}(y) \, ds \, dy \,dx
  \]
  and
  \[
  A_2(t) =  i\g t \int_\R \int_\R \int_0^{+\infty}  \frac {u(x)}{(s + \abs x + \abs y)^2} e^{-\frac {\g s} 2} K_0(t, s+ \abs x + \abs y) \overline {\vf}(y) \, ds \, dy \,dx.
  \]
  With another integration by parts with $L_s$ we obtain 
  \begin{align} \label{estim-A1}
    \abs{A_1(t)} \lesssim t^{3/2} \int_{\abs x \geq 1} \int_{y \in \R} \frac {\abs{u'(x)} \abs{\vf(y)}}{(\abs x + \abs y)^2} \, dy \, dx \lesssim t^{3/2} \nr{u'}_{L^2} \nr \vf _{L^2}.
  \end{align}
  The term $A_2(t)$ is estimated similarly using the Hardy inequality:
  \[
  \int_\R \frac {\abs{u(x)}}{\abs x} \, dx\lesssim \nr{u' }_{L^2}.
  \]
  In all the integrals given by these two integrations by parts we can
  apply the continuity theorem under the integral sign to see that $t
  \mapsto \G(t) u$ is continuous on $(0,+\infty)$. We also see in
  \eqref{estim-A1} and the analogous estimate for $A_2$ that $\nr{\G(t)
    u}_{L^2}$ goes to 0 when $t$ goes to 0. Thus the result is proved
  for $\g > 0$ and $u$ vanishing in $[-1,1]$. 

  For the case $\g < 0$ we use the decomposition of Lemma
  \ref{lem-decomposition-G}. For $\G_1$ we proceed as in the case $\gamma>0$, and for $\G_2$ we use the exponential decay given by Lemma \ref{lem-decomposition-G} and the Hardy inequality.
  Thus we have proved the proposition if $u$ vanishes on $[-1,1]$. 

  Finally we consider the case of a generic $u$. Let $\h \in C_0^{\infty}(\R,[0,1])$ be supported in $(-2,2)$ and equal to 1 on $[-1,1]$. For $u \in \dot H^1(\R)$ we have $\h u \in H^1(\R)$ and $(1-\h) u$ vanishes on [-1,1], so with Lemma \ref{lem-Gamma-H1} we obtain that $t \mapsto \G(t) \h u + \G(t) (1-\h) u$ is continuous from $\R$ to $H^1(\R)$. Moreover for $T > 0$ fixed and $t \in [-T,T]$ we have 
  \[
  \nr{\G(t) u}_{H^1} \lesssim \nr{\h u}_{H^1} + \nr{((1-\h)u)'}_{L^2} \lesssim \nr{u'}_{L^2} + \nr{u}_{L^{\infty}([-2,2])} \lesssim \nr{u'}_{L^2} + \abs{u(0)}.
  \]
  This concludes the proof.
\end{proof}

Now we deduce from Lemma \ref{lem-Gamma-dotH1} the properties of the map $t \mapsto \tSgHg t$:

\begin{proposition}[Properties of the Propagator $\tSgHg{t}$] \label{prop-lin-evol}
  Let $u_0 \in \Ec$. The following assertions hold.
  \begin{enumerate} [(i)]
  \item For all $t \in \R$ the distribution $\tSgHg t u_0$ belongs to $\Ec$. 
  \item For $s,t \in \R$ we have $\tSgHg{s} \circ \tSgHg{t} = \tSgHg{s+t}$ on $\Ec$.
  \item The map $t \mapsto \tSgHg t u_0 - u_0$ is continuous from $\R$
    to $H^1(\R)$. 
  \item The map $t \mapsto \tSgHg t u_0$ is continuous from $\R$ to $\Ec$.
  \item
    Let $R > 0$ and $T > 0$. Then there exists $C_R > 0$ such that for $u_0 \in \Ec$ with $\abs {u_0}_{\Ec}\leq R$ and $t \in [-T,T]$ we have 
    \[
    \nr{\tSgHg t u_0 - u_0}_{H^1} \leq C_R.
    \]
  \item 
    Let $R \geq  0$ and $T \geq 0$. Then there exists $C \geq 0$ such that for $u_0,\tilde u_0 \in \Ec$ with $E(u_0) \leq R$ and $E(\tilde u_0) \leq R$ we have 
    \[
    \sup_{t \in [-T,T]}  d_\infty  \left(\tSgHg t u_0, \tSgHg t \tilde u_0\right) \leq C  d_\infty  (u_0,\tilde u_0).
    \]
  \end{enumerate}
\end{proposition}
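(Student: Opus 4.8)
## Proof Plan for Proposition \ref{prop-lin-evol}

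The plan is to reduce every assertion to the decomposition $\tSgHg{t} = \tSgHo{t} + \G(t)$ of Proposition \ref{prop-Gamma}, read off on distributions: for $u_0 \in \dot H^1(\R)$ we have $\tSgHg{t}u_0 = \tSgHo{t}u_0 + \G(t)u_0$, since $e^{it\Hg}\vf = e^{itH_0}\vf + \G(-t)\vf$ and $\innp{u_0}{\G(-t)\vf} = \innp{\G(t)u_0}{\vf}$ by \eqref{eq:distributional-sense}. From there, one invokes the corresponding statements for $\gamma = 0$, which are known (\cite{Ga08,Ge06,Ge08}), together with Lemma \ref{lem-Gamma-dotH1}, whose whole point is that $\G(t)$ turns a function of $\dot H^1(\R)$ into a \emph{localized} $H^1(\R)$ function, continuously in $t$ and with $\nr{\G(t)u}_{H^1}\lesssim_T \nr{u'}_{L^2} + \abs{u(0)}^2$.

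Assertions (i), (iii), (iv), (v) follow quickly. For (i), $\tSgHo{t}u_0 \in \Ec$ by the $\gamma=0$ theory and $\G(t)u_0 \in H^1(\R)$ by Lemma \ref{lem-Gamma-dotH1}, so $\tSgHg{t}u_0 \in \Ec$ by Lemma \ref{lem-Ec-H1}(i). For (iii), write $\tSgHg{t}u_0 - u_0 = (\tSgHo{t}u_0 - u_0) + \G(t)u_0$: the first summand is continuous from $\R$ to $H^1(\R)$ by the $\gamma=0$ result, the second by Lemma \ref{lem-Gamma-dotH1}, and both vanish at $t=0$. Then (iv) is immediate since $\tSgHg{t}u_0 = u_0 + (\tSgHg{t}u_0 - u_0)$ and $w \mapsto u_0 + w$ is continuous from $H^1(\R)$ to $(\Ec,d_\infty)$ by the Remark after Lemma \ref{lem-Ec-H1}. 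For (v) bound the $H^1(\R)$ norm of each summand: the free part by the $\gamma=0$ estimate, and $\G(t)u_0$ by Lemma \ref{lem-Gamma-dotH1} together with $\abs{u_0(0)} \leq \nr{u_0}_{L^\infty} \lesssim 1 + \abs{u_0}_\Ec^{2/3}$ from \eqref{estim-Linfinity-Ec}. This also gives $\abs{\tSgHg{t}u_0}_\Ec \leq C_R$ through Lemma \ref{lem-Ec-H1}(ii), which is needed below.

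For (vi), by linearity of the distributional maps write $\tSgHg{t}u_0 = a_t + p_t$, $\tSgHg{t}\tilde u_0 = b_t + q_t$ with $a_t = \tSgHo{t}u_0,\,b_t = \tSgHo{t}\tilde u_0 \in \Ec$ and $p_t = \G(t)u_0,\,q_t = \G(t)\tilde u_0 \in H^1(\R)$; note $u_0 - \tilde u_0 \in \dot H^1(\R)$, hence $p_t - q_t = \G(t)(u_0-\tilde u_0)$ satisfies $\nr{p_t - q_t}_{H^1} \lesssim_R d_\infty(u_0,\tilde u_0)$ by Lemma \ref{lem-Gamma-dotH1} and the fact that all relevant quantities are $\lesssim_R 1$. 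The derivative term of $d_\infty(\tSgHg{t}u_0,\tSgHg{t}\tilde u_0)$ splits as $\nr{(a_t-b_t)'}_{L^2} + \nr{(p_t-q_t)'}_{L^2}$, the first being $\nr{u_0'-\tilde u_0'}_{L^2}$ since $e^{-itH_0}$ is an $L^2$ isometry commuting with $\partial_x$; the $L^\infty$ term splits similarly, using $H^1(\R)\hookrightarrow L^\infty(\R)$ for $p_t-q_t$ and the $\gamma=0$ version of (vi) for $\nr{a_t - b_t}_{L^\infty}$. For the modulus term expand
\[
\abs{a_t+p_t}^2 - \abs{b_t+q_t}^2 = \big(\abs{a_t}^2-\abs{b_t}^2\big) + 2\Re\big(\overline{a_t}(p_t-q_t) + (\overline{a_t}-\overline{b_t})q_t\big) + \big(\abs{p_t}^2 - \abs{q_t}^2\big),
\]
and estimate the three groups in $L^2(\R)$ by, respectively, the $\gamma=0$ modulus estimate; $\nr{a_t}_{L^\infty}\nr{p_t-q_t}_{L^2} + \nr{a_t-b_t}_{L^\infty}\nr{q_t}_{L^2}$; and $\nr{p_t-q_t}_{L^\infty}\big(\nr{p_t}_{L^2}+\nr{q_t}_{L^2}\big)$ via \eqref{eq-A2-B2}. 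Each is $\lesssim_R d_\infty(u_0,\tilde u_0)$.

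Finally (ii): on $L^2(\R)$ (hence on $H^1(\R)$) the semigroup property is $e^{-is\Hg}e^{-it\Hg} = e^{-i(s+t)\Hg}$, which, combined with the $\gamma=0$ semigroup property and $\tSgHg{t}=\tSgHo{t}+\G(t)$, is equivalent to the operator identity $\G(s)e^{-itH_0} + e^{-is\Hg}\G(t) = \G(s+t)$ on $L^2(\R)$. Expanding $\tSgHg{s}\tSgHg{t}u_0$ by linearity of the distributional maps and using the $\gamma=0$ group property for the $\tSgHo{s}\tSgHo{t}u_0$ term, (ii) reduces to extending this operator identity from $L^2(\R)$ to $\dot H^1(\R)$; this is handled by the localization splitting $u_0 = \eta u_0 + (1-\eta)u_0$ from the proof of Lemma \ref{lem-Gamma-dotH1} ($\eta u_0 \in H^1(\R)$ being covered by the $L^2$ identity, and $(1-\eta)u_0$, which vanishes near $0$, by the same direct estimates as in that lemma). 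I expect this last point — the careful bookkeeping needed to legitimize the linearity manipulations and semigroup identity in the non-$L^2$, distributional setting — to be the main obstacle, the genuine analytic content having already been absorbed into Lemma \ref{lem-Gamma-dotH1} and the known $\gamma = 0$ theory, so that the remainder is essentially a careful assembly.
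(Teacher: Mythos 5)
Your proposal is correct and follows the same route as the paper: decompose $\tSgHg{t} = \tSgHo{t} + \G(t)$, invoke the known $\gamma=0$ theory for the free part, and use Lemma \ref{lem-Gamma-dotH1} together with Lemma \ref{lem-Ec-H1} for the contribution of $\G(t)$, exactly as in assertions (i), (iii)--(vi). The only divergence is in (ii), where the paper simply notes that the group property follows by duality from $e^{it\Hg}e^{is\Hg}=e^{i(s+t)\Hg}$ tested against (suitably extended) test functions, which is lighter than your operator identity $\G(s)e^{-itH_0}+e^{-is\Hg}\G(t)=\G(s+t)$ and its extension from $L^2(\R)$ to $\dot H^1(\R)$.
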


\begin{proof}
  We first deal with the unperturbed part of the evolution. The map
  \[
  t \mapsto \tSgHo t u_0 - u_0
  \]
  is continuous from $\R$ to $H^1(\R)$. Indeed, as it was proved in
  \cite{Ge08},  it is a consequence of
  the formulation in Fourier variables:
  \begin{multline*}
    \tSgHo t u_0 -u_0=\mathcal
    F^{-1}\left(e^{it|\xi|^2}\hat u_0-\hat u_0\right)\\=\mathcal
    F^{-1}\left((-i\xi) \frac{e^{it|\xi|^2}-1}{|\xi^2|}(-i\xi)\hat u_0\right)=\mathcal
    F^{-1}\left((-i\xi) \frac{e^{it|\xi|^2}-1}{|\xi^2|}\widehat {u_0'}\right).
  \end{multline*}
  Then, thanks to Proposition \ref{prop-Gamma} and Lemma \ref{lem-Gamma-dotH1} the same holds for  
  \[
  t \mapsto \tSgHg t u_0  - u_0 = e^{-itH_0} u_0 - u_0 + \G(t)u_0.
  \]
  With Lemma \ref{lem-Ec-H1}, this proves (i), (iii) and (iv). Statement (ii) is then clear by duality. For the
  last two statements (v) and (vi), we use again the fact that they hold if $\g = 0$. The contribution of $\G(t)$ is controlled by Lemma \ref{lem-Gamma-dotH1} and Lemma \ref{lem-Ec-H1}.
\end{proof}

\subsection{\texorpdfstring{The Linear Evolution in $X_\g^2$}{The Linear Evolution in X2}}

The map $t \mapsto e^{-it\Hg} u$ is continuous for any $u \in L^2(\R)$ and is differentiable for $u \in D(\Hg)$. We expect that the map $t \mapsto \tSgHg t u_0$, continuous when $u_0 \in \Ec$, similarly enjoys better properties when $u_0 \in X^2_\g$.

\begin{proposition} [Linear Evolution in $X_\gamma^2$]\label{prop-X2} Let $t \in \R$ and $u \in X^2_\g$. Then the following properties hold.
  \begin{enumerate}[(i)]
  \item $\tSgHg t u \in X^2_\g$.
  \item $\tHg \tSgHg{t} u = e^{-it\Hg} \tHg u$.
  \item We have
    \[
    \tSgHg{t} u = u -  i \int_0^t e^{-is\Hg}  \tHg u \, ds.
    \]
    In particular, the map $t \mapsto \tSgHg{t} u$ is differentiable on $\R$ and for all $t \in \R$ we have
    \[
    \frac d {dt} \tSgHg{t} u = -i e^{-it\Hg}  \tHg u \in L^2(\R).
    \]
  \end{enumerate}
\end{proposition}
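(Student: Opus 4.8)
The plan is to reduce everything to the already-established $L^2$-theory of the self-adjoint operator $\Hg$ via a density and decomposition argument. Write $u = u_\infty + w$ where $w \in H^1(\R)$ captures the behaviour near infinity and $u_\infty \in X^2_\g$ has the ``wrong'' boundary data but is constant (or affine-with-jump) near $0$; more precisely, using the cutoff construction from the proof of Lemma \ref{lem-densite-X2} one can split $u \in X^2_\g$ so that the singular/non-$L^2$ part $u-w$ is handled separately. Since $u \in X_\g^2 \cap \dot H^2(\R \setminus \{0\})$ we have $\tHg u = -u'' \in L^2(\R)$, so $e^{-is\Hg} \tHg u$ is a genuine $L^2$-valued continuous function of $s$ and the Bochner integral $\int_0^t e^{-is\Hg} \tHg u\, ds$ makes sense in $L^2(\R)$. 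The candidate identity in (iii) is therefore an equality between a distribution in $\Ec$ (the left side $\tSgHg{t} u$, defined by duality as in \eqref{eq:distributional-sense}) and $u$ minus an $L^2$ function; I would prove it by testing against $\vf \in \Sc$ and using the definition of $\tSgHg t$.

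First I would establish (iii), from which (i) and (ii) follow. Fix $\vf \in C_0^\infty(\R \setminus \{0\}) \subset D(\Hg)$, and compute $\frac{d}{ds}\innp{u}{e^{is\Hg}\vf}$. By the definition of $\tSgHg s$ and since $e^{is\Hg}\vf \in D(\Hg)$ with $\frac{d}{ds} e^{is\Hg}\vf = i\Hg e^{is\Hg}\vf$, one gets $\frac{d}{ds}\innp{u}{e^{is\Hg}\vf} = \innp{u}{i\Hg e^{is\Hg}\vf} = -i\innp{\tHg u}{e^{is\Hg}\vf}$, where the last step is exactly the integration-by-parts formula recorded at the end of Section \ref{sec:higher} (valid for $u \in X^2_\g$ and test functions in $D_0(\Hg)$, extended to $e^{is\Hg}\vf$ by approximating $\vf$ in $L^2$ and using that $\Hg$ is self-adjoint and $e^{is\Hg}$ unitary — this is the point requiring a little care about which space of test functions is allowed). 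Integrating from $0$ to $t$ and using $\innp{\tHg u}{e^{is\Hg}\vf} = \innp{e^{-is\Hg}\tHg u}{\vf}$ gives
\[
\innp{\tSgHg t u}{\vf} = \innp{u}{\vf} - i \int_0^t \innp{e^{-is\Hg}\tHg u}{\vf}\, ds = \Bigl\langle u - i\!\int_0^t e^{-is\Hg}\tHg u\, ds, \vf \Bigr\rangle.
\]
By density of $C_0^\infty(\R\setminus\{0\})$ in $L^2(\R)$ and then in $\Sc$, and since both sides are temperate distributions, this proves the identity in (iii). Differentiability of $t \mapsto \tSgHg t u$ and the formula for its derivative are then immediate from the fundamental theorem of calculus for the Bochner integral, since $s \mapsto e^{-is\Hg}\tHg u$ is $L^2$-continuous.

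For (i) and (ii): from (iii), $\tSgHg t u - u = -i\int_0^t e^{-is\Hg}\tHg u\, ds \in H^1(\R)$ provided $e^{-is\Hg}\tHg u$ stays in $H^1(\R)$ locally uniformly in $s$ — but $\tHg u \in L^2(\R)$ only, so I would instead argue directly: $\tSgHg t u - u \in D(\Hg)$-regularity comes from $\Hg\bigl(\int_0^t e^{-is\Hg}\tHg u\, ds\bigr) = \int_0^t e^{-is\Hg}\Hg\tHg u\, ds$ when $\tHg u \in D(\Hg)$, handled by a further density reduction (approximate $u$ within $X^2_\g$ by functions with $\tHg u \in D(\Hg)$), or more cleanly: $\frac{d}{dt}(\tSgHg t u - u) = -i e^{-it\Hg}\tHg u$, and since $e^{-it\Hg}\tHg u \in L^2(\R)$ with $\tSgHg 0 u - u = 0 \in H^1(\R)$, integrating shows $\tSgHg t u - u \in H^1(\R)$ only if we know $e^{-is\Hg}$ preserves $H^1$ — which is Proposition \ref{prop-lin-evol-H1}, but applied to $\tHg u \in L^2$, not $H^1$. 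The correct route is: $\tHg u = -u''$, and $u - (\tSgHg t u) = i\int_0^t e^{-is\Hg}(-u'')\,ds$; one verifies $\tSgHg t u$ solves the abstract ODE $\partial_t v = -i\Hg v$ in the appropriate weak sense, and invokes that $\tSgHg t u \in \Ec$ (Proposition \ref{prop-lin-evol}(i)) together with $\tSgHg t u - u \in H^1$ (from Proposition \ref{prop-lin-evol}(iii), since $u \in \Ec$) to conclude $\tHg(\tSgHg t u)$ is well-defined; then $\tHg \tSgHg t u = -\partial_x^2 \tSgHg t u = -\partial_x^2 u + i\int_0^t \partial_x^2 e^{-is\Hg}(-u'')\,ds = \tHg u - i\int_0^t e^{-is\Hg}\Hg\tHg u\, ds$, matching $e^{-it\Hg}\tHg u$ by the $L^2$ Duhamel formula for the group, which also forces the jump condition at $0$ to persist, giving $\tSgHg t u \in X^2_\g$.

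The main obstacle is the bookkeeping around which space of test functions is legitimate in the integration-by-parts identity $\innp{\tHg u}{\vf} = \innp{u}{\Hg\vf}$: it is stated in the excerpt only for $\vf \in D_0(\Hg)$ (compactly supported), whereas $e^{is\Hg}\vf$ has no compact support. I expect the fix is to approximate: take $\vf_n \in D_0(\Hg)$ with $\vf_n \to e^{is\Hg}\vf$ in $H^2(\R)$ (possible since $D_0(\Hg)$ is dense in $H^1$, and one can upgrade to $H^2$ near infinity where $u_\infty$ is cut off), or alternatively to prove (iii) first for $u$ ranging over a dense subset of $X^2_\g$ consisting of functions that are compactly supported modulo a fixed reference function, and then pass to the limit using the continuity estimates of Proposition \ref{prop-lin-evol}. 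Everything else is routine manipulation of the $L^2$ Duhamel formula and the duality definition of $\tSgHg t$.
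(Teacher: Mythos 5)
Your treatment of part (iii) is essentially the paper's own argument: the authors likewise set $v(t)=\tSgHg{t}u-u+i\int_0^t e^{-is\Hg}\tHg u\,ds$, pair against $\p_0\in D_0(\Hg)$, move everything onto $u$ via $\innp{\tSgHg{t}u}{\p_0}=\innp{u}{e^{it\Hg}\p_0}$ and the integration-by-parts formula, and conclude from the $L^2$ group identity $e^{it\Hg}\p_0-\p_0=i\int_0^t e^{is\Hg}\Hg\p_0\,ds$ together with density of $D_0(\Hg)$ in $L^2(\R)$. The point you flag --- that the pairing $\big<{\tHg u},{\cdot}\big>=\innp{u}{\Hg\,\cdot}$ must be applied to $e^{is\Hg}\p_0$, which is not compactly supported --- is real but minor: for $u\in X^2_\g$ one has $u,u'\in L^\infty(\R)$ while every $\p\in D(\Hg)$ satisfies $\p,\p'\to 0$ at infinity, so the boundary terms at infinity vanish and the formula extends to all of $D(\Hg)$ with no $H^2$-approximation needed. (Your opening decomposition $u=u_\infty+w$ is announced but never used and can be dropped; also keep track of the fact that $\innp{\cdot}{\cdot}_{L^2}$ is the \emph{real} scalar product, so the factors of $i$ must be handled with the complex pairing before taking real parts.)

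The genuine gap is in your derivation of (i) and (ii) from (iii). Every concrete argument you offer --- commuting $\Hg$ or $\partial_x^2$ through the integral, or invoking ``the $L^2$ Duhamel formula for the group'' for $f=\tHg u$ --- requires $\tHg u\in D(\Hg)$, i.e.\ roughly four derivatives on $u$, which fails for general $u\in X^2_\g$; the ``further density reduction'' is not carried out, and the intermediate claim that $\tSgHg{t}u\in\Ec$ with $\tSgHg{t}u-u\in H^1(\R)$ already makes $\tHg\tSgHg{t}u$ well defined is false ($\Ec$ carries only $\dot H^1$ regularity and no jump condition). The single fact that rescues your route is the elementary semigroup identity: for \emph{every} $f\in L^2(\R)$ one has $\int_0^t e^{-is\Hg}f\,ds\in D(\Hg)$ with $\Hg\int_0^t e^{-is\Hg}f\,ds=i\big(e^{-it\Hg}f-f\big)$, with no regularity assumed on $f$. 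Applied to $f=\tHg u$, part (iii) gives $\tSgHg{t}u-u\in D(\Hg)$; since $D(\Hg)\subset H^1(\R)\cap H^2(\R\setminus\{0\})$ carries the jump condition and $u\in X^2_\g$ does too, linearity yields $\tSgHg{t}u\in X^2_\g$, and $\tHg\tSgHg{t}u=\tHg u+\Hg\big(\tSgHg{t}u-u\big)=\tHg u+\big(e^{-it\Hg}\tHg u-\tHg u\big)=e^{-it\Hg}\tHg u$. With that one line your plan closes, and is in fact shorter than the paper's, which instead proves (i) by direct computation on the kernel of $\G(t)$ (pairing $\tSgHg{t}u$ against $\vf''$ and against approximations of Dirac masses at $0^\pm$ to recover the jump condition) and then deduces (ii) by duality. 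As written, however, your step from (iii) to (i)--(ii) does not go through.
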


\begin{proof}
  For $t \in \R$ we set $v(t) = \tSgHg t  u$. By Proposition \ref{prop-lin-evol} we have $v(t) - u \in H^1(\R)$, so $v(t) \in L^{\infty}(\R) \cap \dot H^1(\R)$. We can write $v(t) = v_0(t) + v_\G(t)$ with $v_0(t) = \tSgHo t  u$ and $v_\G(t) = \G(t) u$.  Let $\vf \in C_0^{\infty}(\R\setminus\{0\})$. We have on the one hand
  \begin{align*}
    \innp{v_0}{\vf''}
    & = \innp{u }{ e^{itH_0} \vf''} = \innp{u }{ (e^{itH_0} \vf)''} = - \innp{u' }{ (e^{itH_0} \vf)'} \\
    & = \innp{u'' }{e^{itH_0} \vf} + \Re\left(\big(u'(0^+) - u'(0^-) \big) \big( e^{-itH_0} \overline \vf\big)(0)\right)\\
    & = \innp{u'' }{e^{itH_0} \vf} + \g \Re\left( u(0) \big( e^{-itH_0} \overline \vf\big)(0)\right).
  \end{align*}
  With the same kind of computation as in \eqref{eq-Gammau-phi'} (except that $u$ no longer vanishes on a neighborhood of 0, see also the proof of Lemma \ref{lem-Gamma-H1} in this case), we have on the other hand
  \begin{align*}
    \innp{v_\G}{\vf''}
    & = \innp{\G(t) (\sign(x) u')}{\sign(y) \vf'} + 2 \Re\left( u(0) \big( \G(t) (\sign(y) \overline \vf') \big) (0)\right)\\
    & = \innp{\G(t) u''}{\vf} + \Re\left(\g u(0) \big( \G(t) \overline \vf\big) (0) +  2 u(0) \big( \G(t) (\sign(y) \overline \vf') \big) (0)\right).
  \end{align*}
  If $\g > 0$ we have
  \begin{multline*}
    \big( \G(t) (\sign (y) \overline \vf')\big) (0) = - \frac \g 2 \int_\R \int_0^{+\infty} e^{-\frac {\g s }2} K_0(t,s+\abs y) \sign(y) \overline \vf'(y) \, ds \, dy \\
    \begin{aligned}
      & =   \frac \g 2 \int_\R \int_0^{+\infty} e^{-\frac {\g s }2} \partial _s K_0(t,s+\abs y)  \overline \vf(y) \, ds \, dy \\
      & = - \frac \g 2 \int_\R K_0(y) \overline \vf(y) \, dy +  \left( \frac \g 2 \right)^2 \int_\R \int_0^{+\infty} e^{-\frac {\g s }2}  K_0(t,s+\abs y)  \overline \vf(y) \, ds \, dy \\
      & = - \frac \g 2 \int_\R K_0(y) \overline \vf(y) \, dy - \frac \g 2 \big(\G(t) \overline \vf)(0),
    \end{aligned}
  \end{multline*}
  so finally 
  \[
  \innp{v}{\vf''} = \innp{ u''}{e^{it\Hg} \vf}.
  \]
  We obtain the same result if $\g < 0$ and, finally, we have $v'' \in L^2(\R)$ in both cases. 
  Now let $\vf_\pm \in \Sc$ be supported in $\R_\pm^*$. We have similarly
  \begin{multline*}
    - \innp{v_\G}{\vf_\pm'}
    = \pm\Re\bigg( \frac \g 2\lim_{R \to +\infty}\int_{-R}^{R} \int_\R u(x)
    K_0(t,\abs x + \abs y) \overline{\vf_\pm(y)} \, dx \, dy\\
    \quad \pm \frac \g 2 \lim_{R \to +\infty}\int_{-R}^{R} \int_\R  u(x)  \G(t,x,y) \overline {\vf_\pm (y)}  \, dx \, dy\bigg).
  \end{multline*}
  Now assume that the sequence $(\vf_n^\pm)$ of Schwartz functions
  supported in $\R_\pm^*$ is an approximation of the Dirac
  distribution. Then at the limit when $n$ goes to infinity in this equality we get 
  \[
  v'_\G(0^\pm) = \pm \frac \g 2 v(0).
  \]
  Since $v_0'(0^+)- v_0'(0^-) = 0$ this finally proves that 
  \[
  v'(0^+) - v'(0^-) = \g v(0),
  \]
  which concludes the proof of the first statement. Then $\tHg \tSgHg{t} u$ is well defined and the second statement follows by duality (against functions in $C_0^\infty(\R \setminus \{0\}$) and the fact that $e^{-it\Hg}$ and $\Hg$ commute.

  It remains to prove the last claim. For $t \in \R$ we set 
  \[
  v(t) = \tSgHg{t} u - u +i \int_0^t e^{-is\Hg} \tHg u \, ds.
  \]
  This defines a continuous function from $\R$ to $L^2(\R)$. Let $t \in \R$ and $\p_0 \in  D_0(\Hg)$. We have 
  \begin{align*}
    \innp {v(t)}{\p_0}
    = \innp{ u}{e^{it\Hg}  \p_0 -  \p_0 -i  \int_0^t e^{is\Hg}  \Hg \p_0 \, ds  }
    = 0.
  \end{align*}
  By density of $ D_0(\Hg)$ in $L^2(\R)$ we obtain that $v(t)=0$ on $\R$. Then, since the
  map $s \mapsto e^{-is\Hg}  \tHg u$ belongs to $C^0(\R,L^2(\R))$,
  the last property is proved.
\end{proof}

\section{The Cauchy Problem}
\label{sec:cauchy}

This section is devoted to the proof of Theorem
\ref{th-pb-cauchy}. We first prove that for any $u_0 \in \Ec$ the equation
\eqref{eq:gp} has a unique solution with $u(0) = u_0$. Then we study
\eqref{eq:gp} and the conservation of energy in $X^2_\gamma$. By density we obtain the conservation of energy and then
the global existence.

We first recall explicitly what is called a solution of \eqref{eq:gp}:

\begin{definition}[Solution of \eqref{eq:gp}] \label{def-sol}
  Let $u_0 \in \Ec$ and $T \in (0,+\infty]$. We say that $u : (-T,T) \to \Ec$ is a solution of \eqref{eq:gp} with $u(0) = u_0$ if the following properties are satisfied.
  \begin{enumerate} [(i)]
  \item The function $u$ is continuous from $(-T,T)$ to $(\Ec, d_\infty)$ (and hence to $(\Ec,d_0)$).
  \item We have $u(0) = u_0$.
  \item For $v \in \Sc(\R)$ we have in the sense of distributions in $(-T,T)$
    \[
    i \frac {d}{dt} \innp{u(t)}{v} - \innp{\partial_x u(t)}{\partial_x v} - \g u(t,0) \overline{v(0)} + \innp{F(u(t))}{v} = 0.
    \]
  \end{enumerate}
\end{definition}

\subsection{Local Well-Posedness in the Energy Space}

In this paragraph we prove the  local well-posedness result of \eqref{eq:gp} with initial condition in $\Ec$.
As usual for non-linear problems, it is convenient to write it in
Duhamel form.

\begin{proposition}[Duhamel Formula] \label{prop-duhamel}
  Let $u_0 \in \Ec$ and $u  \in C^0((-T,T),\Ec)$ for some $T \in (0,+\infty]$. Then $u$ is a solution of \eqref{eq:gp} with $u(0)=u_0$ if and only if 
  \begin{multline} \label{eq-duhamel}
    u(t) = \tSgHg t  u_0 + i \int_0^t e^{-i(t-s)\Hg} F(u(s))\, ds \\ = \tSgHg t  u_0  +  i \int_0^t e^{-is\Hg} F(u(t-s))\, ds.
  \end{multline}
\end{proposition}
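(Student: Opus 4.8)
The plan is to establish the equivalence between being a solution in the sense of Definition \ref{def-sol} and satisfying the Duhamel formula \eqref{eq-duhamel} by testing against suitable functions and using the properties of the propagator $\tSgHg{t}$ collected in Proposition \ref{prop-lin-evol} and Proposition \ref{prop-X2}. The key point is that, since functions in $\Ec$ are not in $L^2(\R)$, the Duhamel formula must be understood in the distributional (tempered) sense: the first term $\tSgHg{t}u_0$ lives in $\Ec$ by Proposition \ref{prop-lin-evol}(i), while $F(u(s)) \in H^1(\R) \subset L^2(\R)$ by Lemma \ref{lem-continuite-F-Ec-H1}, so the Duhamel integral $\int_0^t e^{-i(t-s)\Hg}F(u(s))\,ds$ is a genuine $L^2$-valued (indeed $H^1$-valued, by Proposition \ref{prop-lin-evol-H1}) Bochner integral. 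Thus everything is well defined, and the two forms of the integral in \eqref{eq-duhamel} coincide by the change of variables $s \mapsto t-s$.

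First I would prove the implication ``Duhamel $\Rightarrow$ solution''. Assume $u$ satisfies \eqref{eq-duhamel}. Continuity of $t \mapsto u(t)$ in $(\Ec,d_\infty)$ follows from Proposition \ref{prop-lin-evol}(iv) for the linear part and from the continuity of the Duhamel term in $H^1(\R)$ (using continuity of $s\mapsto F(u(s))$ in $H^1$ via Lemma \ref{lem-continuite-F-Ec-H1}, and Proposition \ref{prop-lin-evol-H1}) together with Lemma \ref{lem-Ec-H1}. The initial condition $u(0)=u_0$ is immediate. For the weak equation in item (iii) of Definition \ref{def-sol}, I would fix $v \in \Sc(\R)$ and pair \eqref{eq-duhamel} against $v$. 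For the linear term, $\innp{\tSgHg{t}u_0}{v} = \innp{u_0}{e^{it\Hg}v}$ by definition of $\tSgHg{t}$, and differentiating in $t$ gives $-i\innp{u_0}{e^{it\Hg}\Hg v}$; for the Duhamel term one differentiates the $L^2$-valued integral (its integrand $s \mapsto e^{-is\Hg}F(u(t-s))$ is continuous) to obtain $iF(u(t)) + i\int_0^t e^{-is\Hg}(-i\Hg)F(u(t-s))\,ds$ after an integration by parts in $s$ — here one uses that $e^{-is\Hg}$ commutes with $\Hg$. Recombining, and using the identity $\innp{w}{\Hg v} = \innp{\partial_x w}{\partial_x v} + \g w(0)\overline{v(0)} = q_\g(w,v)$ (valid since $v \in \Sc \subset H^1$ and interpreting the pairing appropriately), one recovers exactly equation (iii). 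The point tensions here are: (a) making the differentiation under the integral and the pairing with $v$ rigorous given that $u(t)\notin L^2$, which is handled by splitting $u(t) = \tSgHg{t}u_0 + (\text{Duhamel}) = u_0 + [(\tSgHg{t}u_0 - u_0)] + (\text{Duhamel})$ where the bracketed pieces are in $H^1$; and (b) the bookkeeping of the integration by parts in $s$ on the Duhamel term, noting that the ``$\Hg$ falls on $v$'' maneuver must really be justified against test functions $v \in \Sc$, using that $e^{it\Hg}v$ stays in $H^1$ (Proposition \ref{prop-lin-evol-H1}).

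For the converse ``solution $\Rightarrow$ Duhamel'', I would start from a solution $u$ in the sense of Definition \ref{def-sol} and, for fixed $v_0 \in \Sc(\R)$ (or, to get the cleanest pairing, $v_0 \in D_0(\Hg)$, which is dense in $L^2$ by Lemma \ref{lem-densite-X2}), consider the scalar function $t \mapsto \innp{u(t)}{e^{i(t-s)\Hg}v_0}$-type quantities; more precisely I would test the weak equation against $v(t) = e^{it\Hg}v_0$ and show that $t \mapsto \innp{u(t)}{e^{it\Hg}v_0} - \innp{u_0}{v_0} - i\int_0^t \innp{F(u(\sigma))}{e^{i\sigma\Hg}v_0}\,d\sigma$ vanishes, by checking it has zero distributional derivative in $t$ and vanishes at $t=0$. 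This is where the weak formulation (iii) is plugged in: with $v = e^{i(\cdot)\Hg}v_0$ the term $\innp{\partial_x u}{\partial_x v} + \g u(t,0)\overline{v(0)}$ is precisely $q_\g(u(t),e^{it\Hg}v_0) = \innp{u(t)}{e^{it\Hg}\Hg v_0}$, which cancels against the $\frac{d}{dt}$ of $e^{it\Hg}$. Then, since $D_0(\Hg)$ is dense in $L^2(\R)$ and all the maps involved are continuous with values in the appropriate $H^1$-translates, I upgrade the scalar identity to the $\Sc'$-level identity $\innp{\tSgHg{t}u_0}{\cdot}$-form, i.e. \eqref{eq-duhamel} holds as tempered distributions; finally, both sides actually differing from $u_0$ by $H^1$ functions, the identity holds in $\Ec$.

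**The main obstacle** I anticipate is the careful handling of the fact that $u(t)$ is not square-integrable, so none of the pairings, integrations by parts, or differentiations under the integral are literally ``in $L^2$''; they must all be interpreted in the $\Sc'$–$\Sc$ duality or by systematically subtracting the appropriate $H^1$-pieces and invoking the density results (Lemma \ref{lem-densite-X2}) and the continuity/boundedness properties of $\tSgHg{t}$, $\G(t)$ and $F$ from the previous section. The algebraic content — integrating by parts in $s$ to move the $\Hg$ around and recognizing $q_\g(\cdot,\cdot) = \innp{\cdot}{\Hg\cdot}$ — is standard and routine; the delicate part is verifying at each step that the objects one manipulates are legitimate and that the limiting/density arguments converge in the right topology.
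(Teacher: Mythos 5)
Your converse direction (``solution $\Rightarrow$ Duhamel'') is essentially the paper's argument: testing against $v(t)=e^{it\Hg}v_0$ and showing that the resulting scalar quantity has vanishing $t$-derivative is exactly the paper's computation $\frac{d}{dt}\innp{\tilde u(t)}{v}=0$ for $\tilde u(t)=\tSgHg{-t}u(t)-i\int_0^t e^{is\Hg}F(u(s))\,ds$, after first checking (via Proposition \ref{prop-lin-evol}, Lemma \ref{lem-continuite-F-Ec-H1}, Proposition \ref{prop-lin-evol-H1} and Lemma \ref{lem-Ec-H1}) that $\tilde u\in C^0((-T,T),\Ec)$. That part of your plan is sound and matches the paper.

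The forward direction is where your plan has a genuine gap. You propose to differentiate $\innp{\tSgHg{t}u_0}{v}=\innp{u_0}{e^{it\Hg}v}$ in $t$ and write the result as $-i\innp{u_0}{e^{it\Hg}\Hg v}$, then pass to the form $q_\g$. But for a generic $v\in\Sc(\R)$ with $v(0)\neq 0$ one has $v\notin D(\Hg)$ (the jump condition fails when $\g\neq 0$), so $\Hg v$ is not an $L^2$ function and $t\mapsto e^{it\Hg}v$ is not differentiable in $L^2$; and replacing $\innp{\cdot}{\Hg v}$ by $q_\g(\cdot,v)$ does not rescue the term involving $u_0$, because $u_0\in\Ec$ is not in the form domain $H^1(\R)$, so the identity $\frac{d}{dt}\innp{\tSgHg{t}u_0}{v}=-i\,q_\g(\tSgHg{t}u_0,v)$ is precisely what would need to be proved and is not supplied by Proposition \ref{prop-lin-evol} or Proposition \ref{prop-X2} (the latter gives differentiability of the linear flow only for data in $X^2_\g$). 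Your splitting $u(t)=u_0+\big[\tSgHg{t}u_0-u_0\big]+(\text{Duhamel})$ does not remove the difficulty, since computing $\frac{d}{dt}\innp{\tSgHg{t}u_0-u_0}{v}$ still requires the time derivative of the linear flow on $\Ec$. The paper closes this by a double regularization: approximate $u_0$ by $u_{0,n}\in\Ec\cap X^2_\g$ in $d_\infty$ (Lemma \ref{lem-densite-X2}) and $F\circ u$ by continuous $D_0(\Hg)$-valued functions $F_n$ converging in $L^\infty_{\mathrm{loc}}\big((-T,T),H^1(\R)\big)$, so that $u_n(t)=\tSgHg{t}u_{0,n}+i\int_0^t e^{-i(t-s)\Hg}F_n(s)\,ds$ is genuinely differentiable with $\partial_t u_n\in C^0(L^2)$ by Proposition \ref{prop-X2}, the identity $i\frac{d}{dt}\innp{u_n}{v}=\innp{\partial_x u_n}{\partial_x v}+\g u_n(t,0)\overline{v(0)}-\innp{F_n(t)}{v}$ holds classically, and one then integrates against $\vf\in C_0^\infty(-T,T)$ and passes to the limit $n\to\infty$. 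You allude to ``density results'' as a fallback, but the concrete mechanism --- regularizing \emph{both} the initial datum \emph{and} the nonlinear term so that Proposition \ref{prop-X2} applies --- is the missing ingredient; without it the direct computation does not close.
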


\begin{proof}
  Since we are dealing
  with functions in $\Ec$, which is not a vector space, we have to be
  careful and check that the ideas of the standard proof indeed
  transfer to our current setting.

  We first assume that $u$ is a solution of \eqref{eq:gp}. For $t \in (-T,T)$ we set 
  \[
  \tilde u (t) = \tSgHg{-t} u(t)   - i \int_0^t e^{is\Hg} F(u(s)) \, ds .
  \]
  By Proposition \ref{prop-lin-evol}, the first term of the right-hand side defines a continuous function from $(-T,T)$ to $(\Ec,d_\infty)$. By Lemma \ref{lem-continuite-F-Ec-H1} and Proposition \ref{prop-lin-evol-H1}, the second term is of class $C^1$ with values in $H^1(\R)$, so by Lemma \ref{lem-Ec-H1} the function $\tilde u$ belongs to $C^0((-T,T),\Ec)$. Then for $v \in C_0^\infty(\R \setminus \{0\})$ we have in the sense of distributions
  \[
\frac d {dt} \innp{\tilde u (t)}{v} \, dt = 0.
  \]
  We deduce that $\tilde u$ is constant (with respect to $t$), and hence $u$ is indeed as given by \eqref{eq-duhamel}.

  Conversely, we have to check that a continuous solution of \eqref{eq-duhamel} is a solution of \eqref{eq:gp} in the sense of Definition \ref{def-sol}. The first property holds by assumption and the second is clear. 
  By Lemma \ref{lem-densite-X2}, we can find a sequence $(u_{0,n})_{n\in\N}$ of functions in $\Ec \cap X^2_\gamma$ such that $d_\infty(u_{0,n},u_0)$ goes to 0. We can also find a sequence of continuous functions $(F_n)$ from $(-T,T)$ to $D_0(\Hg)$ such that $F_n$ tends to $(F \circ u)$ in $L^\infty_{\mathrm{loc}}((-T,T),H^1(\R))$. Then for $n \in \N$ and $t \in (-T,T)$ we set 
  \[
  u_n(t) = \tSgHg t  u_{0,n} + i \int_0^t e^{-i(t-s)\Hg} F_n(s) \, ds.
  \]
  Then, by Proposition \ref{prop-X2}, the function $u_n$ belongs to $C^0 \big( (T,T),\Ec \big)$, is differentiable with $\partial_t u_n \in C^0 \big( (T,T),L^2(\R) \big)$ and for $v \in \Sc(\R)$
  \[
  i \frac {d}{dt} \innp{u_n}{v} = \innp{\tHg u_n}{v} - \innp{F_n(t)}{v} = \innp{\partial_x u_n}{\partial_x v} + \g u_n(t,0) \overline{v(0)} - \innp{F_n(t)}{v}.
  \]
  Now for $\vf \in C_0^\infty(-T,T)$ we multiply this equality by $\vf(t)$, take the integral over $t \in (-T,T)$, perform a partial integration on the left-hand side and take the limit $n \to \infty$ to conclude.
\end{proof}

Now we can prove the local well-posedness of \eqref{eq:gp} and the continuity with respect to the initial condition:

\begin{proposition}[Local Well-Posedness] \label{prop-loc-cauchy}
  Let $R > 0$. Then there exists $T > 0$ such that for all $u_0 \in \Ec$ with $\abs{u_0}_{\Ec} \leq R$ the problem \eqref{eq:gp} has a unique solution $u : (-T,T) \to \Ec$ with $u(0) = u_0$. Moreover there exists $C_R > 0$ such that for $u_0,\tilde u_0 \in \Ec$ with $\abs{u_0}_\Ec \leq R$ and $\abs{\tilde u_0}_\Ec \leq R$ then the corresponding solutions $u$ and $\tilde u$ satisfy
  \[
  \forall t \in (-T,T), \quad d_\infty \big( u(t), \tilde u (t) \big) \leq C_R d_\infty (u_0,\tilde u_0).
  \]
\end{proposition}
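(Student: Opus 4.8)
The plan is to prove Proposition \ref{prop-loc-cauchy} by a Banach fixed point argument applied to the Duhamel map in \eqref{eq-duhamel}, working in a complete metric space built on a ball of $\Ec$ plus an $H^1$-perturbation. Concretely, given $u_0 \in \Ec$ with $\abs{u_0}_\Ec \leq R$, set $v_0 = \tSgHg \cdot u_0$ and look for the solution in the form $u(t) = \tSgHg t u_0 + w(t)$ where $w$ lives in the complete metric space
\[
M_{R,T} = \left\{ w \in C^0\big([-T,T],H^1(\R)\big) \st w(0) = 0,\ \sup_{\abs t \leq T} \nr{w(t)}_{H^1} \leq \rho \right\},
\]
endowed with the distance induced by the sup-in-time $H^1$-norm, for a radius $\rho$ and a time $T$ to be chosen depending only on $R$. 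Note that by Proposition \ref{prop-lin-evol}(v), $\nr{\tSgHg t u_0 - u_0}_{H^1} \leq C_R$ uniformly for $\abs t \leq T \leq 1$, so along the iteration the functions $u(t) = u_0 + (\tSgHg t u_0 - u_0) + w(t)$ stay in a fixed $H^1$-neighborhood of the fixed element $u_0 \in \Ec$, hence $\abs{u(t)}_\Ec$ stays bounded by a constant depending only on $R$ (Lemma \ref{lem-Ec-H1}(ii)). This is exactly the point that makes the nonlinear estimates of Lemma \ref{lem-continuite-F-Ec-H1} applicable with a uniform constant $C_{R'}$.

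The fixed point map is
\[
\Phi(w)(t) = \big(\tSgHg t u_0 - u_0\big) - u_0 + u_0 + i \int_0^t e^{-i(t-s)\Hg} F\big(\tSgHg s u_0 + w(s)\big)\, ds,
\]
or more cleanly $\Phi(w)(t) = i \int_0^t e^{-i(t-s)\Hg} F(\tSgHg s u_0 + w(s))\, ds$, so that $u = \tSgHg \cdot u_0 + w$ solves \eqref{eq-duhamel} iff $w = \Phi(w)$. First I would check that $\Phi$ maps $M_{R,T}$ into itself: by Lemma \ref{lem-continuite-F-Ec-H1}, $\nr{F(\tSgHg s u_0 + w(s))}_{H^1} \leq C_{R'}$ uniformly, and by Proposition \ref{prop-lin-evol-H1}(ii) the propagator $e^{-i(t-s)\Hg}$ is bounded on $H^1$ uniformly for $\abs{t-s}\leq 2T$, so $\nr{\Phi(w)(t)}_{H^1} \leq 2T\, C' C_{R'}$, which is $\leq \rho$ once $T$ is small enough; continuity in $t$ with values in $H^1$ and $\Phi(w)(0)=0$ are immediate, the former from Proposition \ref{prop-lin-evol-H1}(i) and dominated convergence. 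For the contraction property, given $w_1,w_2 \in M_{R,T}$, the difference estimate in Lemma \ref{lem-continuite-F-Ec-H1} gives
\[
\nr{F(\tSgHg s u_0 + w_1(s)) - F(\tSgHg s u_0 + w_2(s))}_{H^1} \leq C_{R'} \nr{w_1(s) - w_2(s)}_{H^1}
\]
(here $u_1=u_2=\tSgHg s u_0$, so the $d_\infty$-term in that lemma drops out — this is why the $H^1$-perturbation formulation is the right one), whence $\sup_{\abs t\leq T}\nr{\Phi(w_1)(t)-\Phi(w_2)(t)}_{H^1} \leq 2T C' C_{R'} \sup\nr{w_1-w_2}_{H^1}$, a strict contraction for $T$ small depending only on $R$. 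Banach's theorem then yields a unique $w$, hence a solution $u$; uniqueness among all continuous $\Ec$-valued solutions follows because any such solution satisfies \eqref{eq-duhamel} by Proposition \ref{prop-duhamel}, and one checks a posteriori (using Proposition \ref{prop-lin-evol}(iii) to see $u - \tSgHg\cdot u_0 \in C^0(H^1)$, then Lemma \ref{lem-continuite-F-Ec-H1} again with a Gronwall argument on a possibly shorter interval, then a continuity/bootstrap argument to cover all of $(-T,T)$) that it must coincide with the fixed point.

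For the continuous dependence statement, I would take $u_0, \tilde u_0$ with $\abs{u_0}_\Ec, \abs{\tilde u_0}_\Ec \leq R$ and the corresponding solutions $u = \tSgHg\cdot u_0 + w$, $\tilde u = \tSgHg\cdot \tilde u_0 + \tilde w$. Subtracting the Duhamel formulas and applying $d_\infty$: the linear part is controlled by Proposition \ref{prop-lin-evol}(vi), giving $\sup_{\abs t\leq T} d_\infty(\tSgHg t u_0, \tSgHg t \tilde u_0) \leq C_R d_\infty(u_0,\tilde u_0)$; for the Duhamel integral term, write $F(u(s)) - F(\tilde u(s)) = F(\tSgHg s u_0 + w(s)) - F(\tSgHg s \tilde u_0 + \tilde w(s))$ and apply Lemma \ref{lem-continuite-F-Ec-H1} with $u_1 = \tSgHg s u_0$, $u_2 = \tSgHg s \tilde u_0$, $w_i = w(s),\tilde w(s)$, bounding the result by $C_{R'}\big(d_\infty(\tSgHg s u_0,\tSgHg s\tilde u_0) + \nr{w(s)-\tilde w(s)}_{H^1}\big)$; then Proposition \ref{prop-lin-evol-H1}(ii) turns $e^{-i(t-s)\Hg}$ of this into an $H^1$-bound, and $H^1 \hookrightarrow L^\infty$ plus Lemma \ref{lem-Ec-H1}(iii) turns it into a $d_\infty$-bound on $w(t)-\tilde w(t)$. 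Setting $\Phi(t) = \sup_{\abs\sigma\leq\abs t} d_\infty(u(\sigma),\tilde u(\sigma))$ one arrives at $\Phi(t) \leq C_R d_\infty(u_0,\tilde u_0) + C_R\int_0^{\abs t}\Phi(\sigma)\,d\sigma$ (after shrinking $T$ once more if needed to absorb a factor), and Gronwall's inequality closes the estimate with a constant $C_R$ depending only on $R$ and $T$.

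I expect the main obstacle to be purely bookkeeping: ensuring that all the "uniform" constants genuinely depend only on $R$ and not on the individual $u_0$, which requires being careful that the $\Ec$-size of $u(t) = u_0 + (\tSgHg t u_0 - u_0) + w(t)$ stays controlled by $R$ throughout the iteration (this is where Proposition \ref{prop-lin-evol}(v) and Lemma \ref{lem-Ec-H1} are essential, and it is why the iteration must be set up as a fixed point in $H^1$ around the \emph{fixed reference point} $u_0 \in \Ec$ rather than directly in $\Ec$). A secondary subtlety is that the distance $d_\infty$ — not $d_0$ — must be used for the difference estimates, since Lemma \ref{lem-Ec-H1}(iii) and Lemma \ref{lem-continuite-F-Ec-H1} crucially fail for $d_0$ (the $L^\infty$-control of $u_1 - u_2$ is not available from $d_0$); this is already flagged in the paper and dictates the choice of metric in the statement.
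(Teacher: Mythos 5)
Your proposal is correct and follows essentially the same route as the paper: a Banach fixed point for $w = u - \tSgHg{\cdot}u_0$ in a ball of $C^0((-T,T),H^1(\R))$, using Lemma \ref{lem-Ec-H1} and Proposition \ref{prop-lin-evol}(v) to keep the $\Ec$-size uniformly controlled by $R$, Lemma \ref{lem-continuite-F-Ec-H1} and Proposition \ref{prop-lin-evol-H1} for the contraction, and Proposition \ref{prop-lin-evol}(vi) for the continuous dependence. The only cosmetic difference is that you close the continuous-dependence estimate with Gr\"onwall while the paper absorbs the $\frac12\nr{w-\tilde w}$ term directly from the contraction inequality; both are equivalent here.
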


\begin{proof}
  Let $u_0 \in \Ec$,  $T>0$, and $w \in C^0((-T,T),H^1(\R))$. By Proposition \ref{prop-lin-evol}, Lemma \ref{lem-Ec-H1}, Lemma \ref{lem-continuite-F-Ec-H1} and Proposition \ref{prop-lin-evol-H1} the function 
  \[
  s \mapsto e^{-i(t-s)\Hg} F \big( w(s) + \tSgHg{s} u_0 \big) 
  \]
  belongs to $C^0((-T,T),H^1(\R))$ for all $t \in (-T,T)$. Thus we can set
  \begin{equation} \label{def-Phi}
    \Phi_{T,u_0}(w) : t \mapsto  i \int_0^t e^{-i(t-s)\Hg} F \big( w(s) +  \tSgHg{s} u_0\big) \, ds.
  \end{equation}
  This also defines a function in $C^0((-T,T),H^1(\R))$.

  Given $u \in C^0((-T,T),\Ec)$, the equality \eqref{eq-duhamel} is then equivalent to 
  \begin{equation} \label{eq-w-fixed-point}
    w = \Phi_{T,u_0}(w)
  \end{equation}
  where we have set 
  \begin{equation} \label{def-w}
    w : t \mapsto   u(t) - \tSgHg{t} u_0.
  \end{equation}
  Our purpose is to use the fixed point Theorem to prove that \eqref{eq-w-fixed-point} has a unique solution in a suitable space.

  Let $R > 0$ be greater that $\abs{u_0}_\Ec$. For $T > 0$ we set 
  \[
  \WW = \left \{ w \in C^0((-T,T),H^1(\R)) \st \nr{w(t)}_{H^1} \leq R \text{ for all } t \in (-T,T) \right \}.
  \]

  By Lemma \ref{lem-Ec-H1} and Proposition \ref{prop-lin-evol} there exists $\tilde R$ which only depends on $R$ such that for $w \in \WW$ and $s \in (-T,T)$ we have
  \[
  \abs{ w(s) + \tSgHg s u_0}_\Ec \leq \tilde R.
  \]
  Then, by Proposition \ref{prop-lin-evol-H1}, Lemma \ref{lem-continuite-F-Ec-H1} and Proposition \ref{prop-lin-evol} we have for all $T > 0$ and $t \in (-T,T)$
  \begin{align*}
    \nr{\Phi_{T,u_0}(w)(t)}_{H^1} \lesssim T \sup_{s \in (-T,T)} \nr{F\big( w(s) + \tSgHg{s}u_0 \big)}_{H^1} \lesssim T.
  \end{align*}
  This proves that if $T > 0$ is small enough then we have 
  \[
  \nr{\Phi_{T,u_0}(w)}_{L^{\infty}((-T,T),H^1)} \leq R.
  \]
  We similarly prove that for $T > 0$ small enough we have
  \begin{equation} \label{estim-PhiT}
    \nr{\Phi_{T,u_0}(w) - \Phi_{T,u_0}(\tilde w)}_{L^{\infty}((-T,T),H^1)} \leq \frac 12  \nr{w - \tilde w}_{L^{\infty}((-T,T),H^1)}.
  \end{equation}
  In particular, for $T > 0$ small enough, $\Phi_{T,u_0}$ is a contraction of $\WW$. Now let such a $T$ be fixed. By the fixed point theorem there exists a solution $w \in \WW$ of \eqref{eq-w-fixed-point}, which gives a solution $u$ of \eqref{eq:gp} with $u(0) = u_0$. Conversely, if $u$ is such a solution on $(-T,T)$ for some $T > 0$, then  $w$ given by \eqref{def-w} belongs to $\WW$ for $R$ large enough. We deduce uniqueness.

  Finally, we prove the continuity of $u(t)$ with respect to $u_0$. Let $u_0,\tilde u_0 \in \Ec$ and $R > 0$ be such that $\abs{u_0}_\Ec \leq R$ and $\abs {\tilde u_0}_\Ec \leq R$. Let $w,\tilde w \in \WW$ be the fixed points for $\Phi_{T,u_0}$ and $\Phi_{T,\tilde u_0}$ respectively, $T > 0$ being chosen small enough. As for \eqref{estim-PhiT} we see that for $T > 0$ smaller if necessary we have
  \begin{align*}
    \nr{w- \tilde w }_{L^{\infty}((-T,T),H^1)}
    & = \nr{\Phi_{T,u_0}(w)- \Phi_{T,\tilde u_0}(\tilde w)}_{L^{\infty}((-T,T),H^1)}\\
    & \leq \frac 12 \left( d_\infty (u_0,\tilde u_0) + \nr{w- \tilde w }_{L^{\infty}((-T,T),H^1)} \right),
  \end{align*}
  and hence 
  \[
  \nr{w- \tilde w }_{L^{\infty}((-T,T),H^1)} \leq  d_\infty (u_0,\tilde u_0).
  \]
  With \eqref{def-w} and Proposition \ref{prop-lin-evol}, we obtain that for all $t \in (-T,T)$ we have 
  \[
  d_\infty  (u(t),\tilde u(t)) \lesssim  d_\infty  (u_0,\tilde u_0),
  \]
  and the proposition is proved.
\end{proof}

From Proposition \ref{prop-loc-cauchy} we deduce the following result.
\begin{corollary} \label{cor-loc-cauchy}
Let $u_0 \in \Ec$. Then the problem \eqref{eq:gp} has a unique maximal solution $u$ with $u(0) = u_0$, defined on $(-T_-,T_+)$ for some $T_\pm \in (0,+\infty]$. Moreover if $T_\pm < +\infty$ then 
\[
\abs{u(t)}_\Ec \limt {t} {\pm T_\pm} +\infty.
\]
\end{corollary}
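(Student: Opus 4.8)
The plan is to run the standard ``glue local solutions / blow-up alternative'' argument, relying only on Proposition \ref{prop-loc-cauchy} (local existence with a time depending solely on $\abs{\cdot}_\Ec$, uniqueness, and continuous dependence) together with the fact that \eqref{eq:gp} is autonomous.

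First I would construct the maximal solution. Fix $u_0 \in \Ec$. By Proposition \ref{prop-loc-cauchy} there is at least one solution on some interval $(-T,T)$. Consider the family of all open intervals $I$ containing $0$ on which \eqref{eq:gp} admits a solution equal to $u_0$ at $t = 0$. Since the equation is autonomous, the uniqueness statement of Proposition \ref{prop-loc-cauchy} applies to Cauchy data prescribed at any time, so any two such solutions coincide on the overlap of their intervals. Setting $T_+ := \sup_I \sup I \in (0,+\infty]$ and $T_- := \sup_I (-\inf I) \in (0,+\infty]$, these solutions patch together into a single solution $u$ on $(-T_-,T_+)$, which is by construction the unique maximal one.

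Next I would establish the blow-up alternative, treating $T_+$ (the case of $T_-$ being symmetric). Assume $T_+ < +\infty$ and, arguing by contradiction, that $\abs{u(t)}_\Ec \not\to +\infty$ as $t \to T_+$. Then there exist $R > 0$ and a sequence $t_n \uparrow T_+$ with $\abs{u(t_n)}_\Ec \leq R$ for all $n$. Let $T = T(R) > 0$ be the uniform existence time furnished by Proposition \ref{prop-loc-cauchy} for data of $\Ec$-size at most $R$, and pick $n$ with $T_+ - t_n < T$. Restarting the equation from $u(t_n) \in \Ec$ (legitimate since the problem is autonomous) produces a solution on $(t_n - T, t_n + T)$; by uniqueness it coincides with $u$ where both are defined, hence $u$ extends to a solution past $T_+$, contradicting the maximality of $T_+$. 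Therefore $\abs{u(t)}_\Ec \to +\infty$ as $t \to T_+$, the full limit existing because any finite limit point along a subsequence would reproduce the same contradiction.

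I do not anticipate a genuine obstacle: the only point to watch is that $\Ec$ is merely a complete metric space and not a vector space, but this is harmless since Proposition \ref{prop-loc-cauchy} is already phrased intrinsically in terms of $\abs{\cdot}_\Ec$ and $d_\infty$, so no new estimate or function-space manipulation is needed — it is a soft ODE-type argument.
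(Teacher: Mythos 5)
Your argument is correct and is essentially the paper's own proof: the paper likewise takes a sequence $t_n\to T_+$ along which $\abs{u(t_n)}_\Ec$ stays below some $R$, restarts the flow at $t_n$ using the uniform existence time $T(R)$ from Proposition \ref{prop-loc-cauchy}, and contradicts maximality. The only difference is that you spell out the gluing construction of the maximal solution, which the paper leaves implicit.
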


\begin{proof}
  Let $u_0\in \Ec$ and let $u$ be the unique maximal solution of
  \eqref{eq:gp} with $u(0) = u_0$, defined on $(-T_-,T_+)$. Assume by
  contradiction that $T_+<+\infty$ and there exists $R>0$ such that
  for every $n\in\mathbb N$ there exists $t_n\in (T_+-1/n,T_+)$ with 
  $\abs{u(t_n)}_\Ec<R$. Let $T$ be the time of existence given by
  Proposition \ref{prop-loc-cauchy} and let $n$ be such that
  $t_n+T>T_+$. By Proposition
  \ref{prop-loc-cauchy}, $u$ exists on $(t_n-T,t_n+T)$. However, since
  $t_n+T>T_+$, we have a contradiction with the maximality of $T_+$. The same
  reasoning works with $T_-$. 
  Therefore  if $T_\pm < +\infty$ then 
\[
\abs{u(t)}_\Ec \limt {t} {\pm T_\pm} +\infty,
\]
which is the desired result. 

\end{proof}

\subsection{Conservation of Energy and Global Existence}

In order to prove the conservation of the energy, we need a solution
of \eqref{eq:gp} in a strong sense. This is the case when the  initial
condition is in $X^2_\g$:

\begin{proposition}[Local Well-Posedness at High Regularity] \label{prop-cauchy-X2}
Let $u_0 \in \Ec \cap X^2_\g$ and let $u$ be the maximal solution of
\eqref{eq:gp} with $u(0) = u_0$, as given by Corollary
\ref{cor-loc-cauchy}. Let $(-T_-,T_+)$ be the interval of definition
of $u$, with $T_\pm \in (0,+\infty]$. Then for all $t\in (-T_-,T_+)$,
$u(t)\in X^2_\g$, $u$ is differentiable with $\partial_t u \in C^0((T_-,T_+),L^2(\R))$ and for all $t \in (-T_-,T_+)$ we have
\begin{equation} \label{eq-u-reg}
\partial_t u(t) = -i \tHg u(t) + i F(u(t)).
\end{equation}
\end{proposition}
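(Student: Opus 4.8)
The plan is to first establish that $u$ is differentiable in time with $\partial_t u \in C^0\big((-T_-,T_+),L^2(\R)\big)$, and only then to read off the spatial regularity of $u(t)$, the jump condition and \eqref{eq-u-reg} from the weak formulation of Definition \ref{def-sol}. Fix $T' \in \big(0,\min(T_-,T_+)\big)$; it suffices to argue on $(-T',T')$ and let $T'$ increase. Since $u\in C^0\big((-T',T'),(\Ec,d_\infty)\big)$ and $\abs{\cdot}_\Ec$, $\nr{\cdot}_{L^\infty}$ are continuous for $d_\infty$ (Lemmas \ref{lem-abs-Ec} and \ref{lem-E-continue}), the quantity $M:=\sup_{\abs{t}\le T'}\nr{u(t)}_{L^\infty}$ is finite.

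First I would construct the candidate for $\partial_t u$ by solving the linearised equation in Duhamel form. Let $DF[\f]$ denote the $\R$-linear map $h\mapsto (1-\abs\f^2)h-2\Re(\f\bar h)\f$ on $L^2(\R)$, which satisfies $\nr{DF[\f]}_{\mathcal L(L^2)}\lesssim 1+\nr\f_{L^\infty}^2$ and $\nr{DF[\f_1]-DF[\f_2]}_{\mathcal L(L^2)}\lesssim(\nr{\f_1}_{L^\infty}+\nr{\f_2}_{L^\infty})\nr{\f_1-\f_2}_{L^\infty}$. Set $v_0:=-i\tHg u_0+iF(u_0)\in L^2(\R)$, which makes sense because $u_0\in X^2_\g\cap\Ec$. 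Since $s\mapsto DF[u(s)]$ is bounded on $(-T',T')$ by $1+M^2$ and strongly continuous, a contraction argument on a short time interval together with a Gronwall a priori bound yields a unique $v\in C^0\big((-T',T'),L^2(\R)\big)$ such that
\[
v(t)=e^{-it\Hg}v_0+i\int_0^t e^{-i(t-s)\Hg}\,DF[u(s)]\,v(s)\,ds.
\]

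Next I would prove $\partial_t u=v$ by Gronwall estimates on difference quotients. Since \eqref{eq:gp} is autonomous, $u(\cdot+h)$ solves it with datum $u(h)$, so Proposition \ref{prop-duhamel} together with the semigroup property of $\tSgHg{t}$ gives
\[
u(t+h)-u(t)=e^{-it\Hg}\big(u(h)-u_0\big)+i\int_0^t e^{-i(t-s)\Hg}\big(F(u(s+h))-F(u(s))\big)\,ds\in L^2(\R).
\]
Writing $F(u(s+h))-F(u(s))=DF[u(s)]\big(u(s+h)-u(s)\big)+A_h(s)$ with $\nr{A_h(s)}_{L^2}\lesssim_{T'}\nr{u(s+h)-u(s)}_{L^\infty}\nr{u(s+h)-u(s)}_{L^2}$, and using Proposition \ref{prop-X2}(iii) to get $h^{-1}(\tSgHg{h}u_0-u_0)\to -i\tHg u_0$ and $h^{-1}\int_0^h e^{-i(h-s)\Hg}F(u(s))\,ds\to F(u_0)$ in $L^2(\R)$ as $h\to0$, a first Gronwall argument shows that $\sup_{\abs{t}\le T'}\nr{h^{-1}(u(t+h)-u(t))}_{L^2}$ stays bounded as $h\to0$; since $\nr{u(s+h)-u(s)}_{L^\infty}\le d_\infty(u(s+h),u(s))\to0$ uniformly on $[-T',T']$, this forces $\sup_{\abs{s}\le T'}\nr{A_h(s)}_{L^2}\to0$. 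Subtracting the Duhamel identities for $h^{-1}(u(t+h)-u(t))$ and $v(t)$ and applying Gronwall once more then yields $h^{-1}(u(t+h)-u(t))\to v(t)$ in $L^2(\R)$, uniformly on $[-T',T']$; hence $u$ is differentiable with $\partial_t u=v\in C^0\big((-T',T'),L^2(\R)\big)$. This is the step I expect to be the main obstacle: one cannot differentiate the Duhamel formula for $u$ termwise, because $F(u(s))$ does \emph{not} belong to $D(\Hg)$ in general (the nonlinearity does not preserve the jump condition \eqref{eq:jump}), so $h^{-1}(e^{-ih\Hg}-1)\big(i\int_0^t e^{-i(t-s)\Hg}F(u(s))\,ds\big)$ has no evident limit in $L^2(\R)$; building $v$ beforehand and comparing difference quotients circumvents this.

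Finally, with $\partial_t u\in C^0(L^2)$ at hand, I would return to Definition \ref{def-sol}(iii), which now holds pointwise in $t$: for all $v\in\Sc(\R)$,
\[
i\innp{\partial_t u(t)}{v}-\innp{\partial_x u(t)}{\partial_x v}-\g u(t,0)\overline{v(0)}+\innp{F(u(t))}{v}=0.
\]
Testing against $v\in C_0^\infty(\R\setminus\{0\})$ and using $\innp{\partial_x u(t)}{\partial_x v}=-\innp{u(t)}{v''}$ shows that $i\partial_t u(t)+F(u(t))$ is the unique $L^2(\R)$ function coinciding with $-u(t)''$ on $\R\setminus\{0\}$; in particular $u(t)\in\dot H^2(\R\setminus\{0\})$, the expression $\tHg u(t)$ is a genuine $L^2$ function and \eqref{eq-u-reg} holds. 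The one-sided traces $u'(t,0^\pm)$ now exist, so integrating by parts in $\innp{\partial_x u(t)}{\partial_x v}$ is licit and produces a boundary term; inserting \eqref{eq-u-reg} to cancel the $L^2$ contributions leaves $\Re\big[\big(u'(t,0^+)-u'(t,0^-)-\g u(t,0)\big)\overline{v(0)}\big]=0$ for all $v(0)\in\C$, i.e. the jump condition \eqref{eq:jump}. Together with $u(t)\in L^\infty(\R)\cap\dot H^1(\R)$ (as $u(t)\in\Ec$), this gives $u(t)\in X^2_\g$ for every $t$, which completes the proof.
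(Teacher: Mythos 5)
Your proof is correct, but it takes a genuinely different route from the paper's. The paper runs a \emph{second fixed-point argument} in the higher-regularity space $\WWWbis \subset C^0(H^1)\cap C^1(L^2)$: it uses Proposition \ref{prop-X2} to differentiate $s\mapsto F(\tilde w(s)+\tSgHg{s}u_0)$ in time, shows $\Phi_{\tilde T,u_0}$ contracts $\WWWbis$, identifies the resulting fixed point with the low-regularity one by uniqueness, obtains membership in $D(\Hg)$ by computing the limit of $\t^{-1}(e^{-i\t\Hg}-1)\,(\Phi_{\tilde T,u_0}(w))(t)$ as a combination of the time difference quotient and an averaged Duhamel integral, and then needs a separate continuation step (a Gr\"onwall bound on $\nr{\partial_t u}_{L^2}$ against a blow-up alternative) to propagate the regularity from a possibly smaller interval $(-\tilde T_-,\tilde T_+)$ to the whole interval of existence. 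You instead build the candidate derivative $v$ as the solution of the linearised Duhamel equation and prove $\partial_t u=v$ by two Gr\"onwall estimates on difference quotients; since these estimates are valid on any compact subinterval of the maximal interval, the continuation step disappears, which is a genuine simplification. Your recovery of the spatial regularity and of the jump condition from the weak formulation (an elliptic-regularity-type bootstrap: $-u''=i\partial_t u+F(u)\in L^2$ off the origin, then integration by parts against test functions with $v(0)\neq 0$) replaces the paper's $\t^{-1}(e^{-i\t\Hg}-1)$ computation. Both arguments rest in the end on Proposition \ref{prop-X2} to differentiate the free evolution of $u_0\in X^2_\g$, and your diagnosis of the obstruction --- $F(u(s))\notin D(\Hg)$, so the Duhamel integral cannot be differentiated termwise in $x$ --- is precisely what both devices are designed to circumvent. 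Two cosmetic points only: exhaust $(-T_-,T_+)$ by asymmetric compact intervals $[-T_1,T_2]$ with $T_1<T_-$, $T_2<T_+$ rather than by $(-T',T')$ with $T'<\min(T_-,T_+)$, so as to cover the whole maximal interval when $T_-\neq T_+$; and note that $DF[u(s)]$ is only $\R$-linear, which is harmless since $e^{-it\Hg}$ is an $L^2$-isometry and Gr\"onwall only uses norm bounds.
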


\begin{proof} 
Let $R > 0$. We prove that there exists $\tilde T > 0$ such that for
all $u_0 \in \Ec \cap X^2_\g$ with $\abs {u_0}_\Ec \leq R$ and $\| \tHg u_0
\|_{L^2(\R)} \leq R$ the maximal solution $u$ of \eqref{eq:gp} is at
least defined on $(-\tilde T,\tilde T)$, belongs $X^2_\g$ for all $t\in (-T_-,T_+)$,
and is differentiable with $u_t\in C^0((-\tilde T,\tilde T),L^2(\R))$.

Let $\tilde T > 0$ and $\tilde R > 0$. We denote by $\WWWbis$ the set of functions $\tilde w \in C^0((-\tilde T, \tilde T),H^1(\R)) \cap C^1((-\tilde T, \tilde T),L^2(\R))$ such that $\tilde w(0) = 0$ and, for all $t \in (-\tilde T, \tilde T)$,
\[
\nr{\tilde w(t)}_{H^1} \leq \tilde R  \quad \text{and} \quad  \nr{\partial_t \tilde w(t)}_{L^2} \leq \tilde R.
\]
Let $\tilde w \in \WWWbis$. For $t \in (-\tilde T,\tilde T)$ we set
$v(t) = \tilde w(t) + \tSgHg{t} u_0$. By Proposition \ref{prop-X2},
$v$ is differentiable with $v_t\in C^0((-\tilde T,\tilde T),L^2(\R))$.
Then $(F \circ v)$ belongs to $C^1((-\tilde T,\tilde T),L^2(\R))$ with 
\begin{equation} \label{der-F-circ-v}
\partial_t (F \circ v)(t) = v_t-2|v|^2v_t-v^2\bar v_t. 
\end{equation}
For $t \in (-\tilde T,\tilde T)$ we have (recall that $\Phi$ was defined in \eqref{def-Phi}) 
\begin{multline*}
\frac {(\Phi_{\tilde T,u_0}(\tilde w))(t+\t) - (\Phi_{\tilde T,u_0}(\tilde w))(t)}\t \\
= i \int_0^t e^{-is\Hg} \frac {F(v(t+ \t -s)) - F(v(t-s))} \t \, ds + \frac {i } \t \int_t^{t+\t} e^{-is\Hg} F(v(t+\t -s)) \, ds.
\end{multline*}
Taking the limit $\t \to 0$ we obtain that $\Phi_{\tilde
    T,u_0}(\tilde w)$ is continuously differentiable with $\partial_t
  (\Phi_{\tilde T,u_0}(\tilde w))\in C^0((-\tilde T,\tilde T),L^2(\R))$ and for $t \in (-\tilde T,\tilde T)$ we have
\[
\partial_t (\Phi_{\tilde T,u_0}(\tilde w))(t) = i \int_0^t
e^{-is\Hg} \partial_t(F \circ v)(t-s) \, ds + i e^{-it\Hg}
F(u_0).
\]
In particular,
\[
\nr{\partial_t(\Phi_{\tilde T,u_0}(\tilde w))(t)}_{L^2}
\lesssim C_{R} (1+ \tilde T C_{\tilde R}),
\]
where $C_{R}$ only depends on $R$ and $C_{\tilde R}$ only depends on $\tilde R$. Moreover for $\tilde w_1,\tilde w_2 \in \WWWbis$ we have 
\begin{multline*}
\nr{\partial_t(\Phi_{\tilde T,u_0}(\tilde w_1))(t)
  - \partial_t(\Phi_{\tilde T,u_0}(\tilde w_2))(t)}_{L^2} \\\leq \tilde
T C_{R} C_{\tilde R}\left(\norm{\tilde w_1-\tilde
    w_2}_{L^\infty((-\tilde T,\tilde T),H^1)}+\norm{\partial_t\tilde w_1-\partial_t\tilde
    w_2}_{L^\infty((-\tilde T,\tilde T),L^2)}\right).
\end{multline*}
Finally $(\Phi_{\tilde T,u_0}(\tilde w))(0) = 0$, so for $\tilde R$ large enough and $\tilde T$ small enough the map $\Phi_{\tilde T,u_0}$ defines a contraction of $\WWWbis$. Thus the equation $\Phi_{\tilde T,u_0} \tilde w =\tilde w$ has a unique solution in $\WWWbis$. By uniqueness, this proves that the fixed point $w$ of $\Phi_{T,u_0}$ obtained in the proof of Theorem \ref{th-pb-cauchy} is in $\WWWbis$. 

Let $t \in (-\tilde T,\tilde T)$. We have
\begin{eqnarray*}
  \lefteqn{\frac {e^{-i\t \Hg} - 1}{\t} (\Phi_{\tilde T,u_0}(w))(t) }\\
&& = \frac {(\Phi_{\tilde T,u_0}(w))(t + \t) - (\Phi_{\tilde T,u_0}(w))(t)}{\t} - \frac i \t \int_t^{t+\t} e^{-i(t+\t -s)\Hg} F(v(s))\,ds\\
&& \limt \t 0 \partial_t(\Phi_{\tilde T,u_0}(w))(t) - i F(v(t)).
\end{eqnarray*}
This proves that $w(t) = (\Phi_{\tilde T,u_0}(w))(t) \in  D(\Hg)$ with 
\begin{equation*} 
  -i \Hg (\Phi_{\tilde T,u_0}(w))(t)  =\partial_t (\Phi_{\tilde T,u_0}(w))(t) - i F(v(t)).
\end{equation*}
Therefore the solution of \eqref{eq:gp} satisfies \eqref{eq-u-reg} on $(-\tilde T,\tilde T)$.

By uniqueness of a solution and the fact that the time $\tilde T$ only
depends on $R$ above, we obtain for $u_0 \in X^2_\g$ and $T_-,T_+$
given by Corollary \ref{cor-loc-cauchy} a maximal interval $(-\tilde
T_-,\tilde T_+)$ (with $\tilde T_\pm \in (0,T_\pm]$) such that the
solution $u$ of \eqref{eq:gp} lives in $X^2_\g$, is differentiable with $u_t \in C^0((-\tilde T_-,\tilde T_+),L^2(\R))$ and satisfies \eqref{eq-u-reg} on $(-\tilde T_-,\tilde T_+)$. Moreover if $\tilde T_\pm < +\infty$ we have 
\begin{equation} \label{eq-blowup-reg}
\abs{u(t)}_\Ec + \| \tHg u(t) \|_{L^2(\R)} \limt {t} {\pm \tilde T_\pm} +\infty.
\end{equation}

Now assume that $\tilde T_+ < T_+$. Then by continuity of $u$ in $\Ec$ on $[0,T_+)$ we obtain that $\abs{u}_\Ec$ is bounded on $[0,\tilde T_+)$. By \eqref{eq-duhamel} we have for $t \in [0,\tilde T_+)$
\[
\partial_t u(t) = -i \tSgHg t \tHg u_0 + i e^{-it\Hg} F(u_0) + i \int_0^t e^{-is\Hg} \partial_t (F\circ u) (t-s) \, ds.
\]
The first two terms are bounded on $[0,\tilde T_+)$. Since $\abs{u(t)}_\Ec$ is also bounded, we obtain with \eqref{der-F-circ-v} applied to $u$ that there exists $C > 0$ such that 
\[
\nr{\partial_t u(t)}_{L^2(\R)} \leq C + C \int_0^t \nr{\partial_t u(s)}_{L^2(\R)}.
\]
By the Gr\"onwall Lemma, $\partial_t u$ is bounded in $L^2(\R)$ on the bounded interval $[0,\tilde T_+)$. By \eqref{eq-u-reg}, $\tHg u(t)$ is also bounded, which gives a contradiction with \eqref{eq-blowup-reg} and concludes the proof.
\end{proof}

We are now in position to finish the proof of Theorem
\ref{th-pb-cauchy}, \ie to prove the conservation of the energy $E_\g$ and the global existence for the solution of \eqref{eq:gp}.

\begin{proof}[Proof of Theorem \ref{th-pb-cauchy}, Global Existence, assuming Conservation of Energy]  
Take an initial data $u_0 \in \mathcal E$. Let $u$ be the maximal solution of \eqref{eq:gp} with $u(0) = u_0$. It is defined on some interval $I$ of $\R$. By conservation of energy and the energy
bound of Lemma \ref{lem-abs-Ec}, there exists $R \geq 0$ such that 
$
\abs{u(t)}_{\Ec} \leq R
$
for all $t \in I$. By Corollary \ref{cor-loc-cauchy}, this proves that $I = \R$.
\end{proof}

\begin{proof}[Proof of Theorem \ref{th-pb-cauchy}, Conservation of Energy]
Let $u_0 \in \Ec$ and let $u$ be the maximal solution of \eqref{eq:gp} with $u(0) = u_0$. It is defined on some interval $I$ of $\R$. 

If $u_0 \in X^2_\g$ then by Proposition \ref{prop-cauchy-X2} the map $t \mapsto E_\gamma(u(t))$ is differentiable on $I$ with derivative 0, so $E_\g(u(t))$ is constant on $I$ (and hence $I = \R$). The theorem is proved in this case.

Even if $u_0$ is not in $X^2_\g$, there exists a sequence $(u_{0,n})_{n \in \N}$ of functions in $\Ec \cap X^2_\g$ which converges to $u_0$ in $\Ec$. For all $n \in \N$ we denote by $u_n$
  the maximal solution of \eqref{eq:gp} with initial condition $u_{0,n}$. By the global existence result for $u_{0,n}$, $u_n$ is defined on $\R$, and in particular on $I$. By continuity of the flow in $\Ec$ and the continuity of the energy (see Lemma \ref{lem-E-continue}) we have for all $t \in I$:
  \[
  E_\g (u(t)) = \lim_{n \to+\infty} E_\g(u_n(t)) = \lim_{n \to+\infty} E_\g (u_{0,n}) = E_\g(u_0).
  \]
Thus we have conservation of the energy for $u$, which is then globally defined. This concludes the proof of Theorem \ref{th-pb-cauchy}.
\end{proof}

\section{Existence and Characterizations of Black Solitons}
\label{sec:existence}

\subsection{Existence  of Black Solitons}

As announced in introduction, the finite energy stationary solutions
to \eqref{eq:gp} are given in the following result. 

\begin{proposition}[Existence of Black Solitons]\label{prop:existence}
  Let $\gamma\in\R\setminus\{0\}$. Then the set of  finite-energy solutions to \eqref{SGPdelta} is
{
  \begin{gather*}
    \left\{e^{i\theta}\kappa,e^{i\theta}b_\gamma\ :\
      \theta\in\R\right\},\quad \text{if }\gamma>0\\
    \left\{e^{i\theta}\kappa,e^{i\theta}b_\gamma,e^{i\theta}\tilde b_\gamma\ :\
      \theta\in\R\right\},\quad \text{if }\gamma<0
  \end{gather*}
  where
  \[
  \kappa(x):=\tanh \left( \frac{x}{\sqrt{2}}\right),\;
  b_{\gamma}(x):=\tanh \left(
    \frac{|x|-c_{\gamma}}{\sqrt{2}}\right),\;
 \tilde b_{\gamma}(x):=\coth \left( \frac{|x|+c_{\gamma}}{\sqrt{2}}\right),
  \]
  for $c_{\gamma}:=\frac{1}{\sqrt{2}}\sinh^{-1}\left(
    -\frac{2\sqrt{2}}{\gamma} \right)$. 
}
\end{proposition}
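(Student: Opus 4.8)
The plan is to classify all finite-energy solutions by a phase-plane analysis on each half-line $\R_{\pm}$, glued together through the matching conditions at the origin imposed by the jump relation \eqref{eq:jump}. First I would record that a finite-energy solution $u$ of \eqref{SGPdelta} solves the ODE $u''+(1-|u|^2)u=0$ classically on $\R\setminus\{0\}$, is continuous at $0$ (being an element of $\Ec$), and satisfies \eqref{eq:jump}. On $\R_{+}$, writing $u=\Re u+i\,\Im u$, the Wronskian-type quantity $W:=(\Re u)'\,\Im u-\Re u\,(\Im u)'$ is constant; I would show $W=0$, for otherwise $|u|$ never vanishes on $\R_{+}$, the polar form $u=\rho e^{i\phi}$ is valid with $W=-\rho^2\phi'$, and then $|u'|^2\geq\rho^2(\phi')^2=W^2/\rho^2\to W^2\neq0$ as $x\to+\infty$ (since $|u|\to1$), contradicting $u'\in L^2(\R)$. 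Given $W=0$, rotating $u$ globally so that $\Im u$ vanishes at one point $x_\ast$ with $u(x_\ast)\neq0$ (such an $x_\ast$ exists because $|u|\to1$), the relation $W(x_\ast)=0$ forces $(\Im u)'(x_\ast)=0$ as well, so $\Im u$ solves a linear second-order ODE with vanishing Cauchy data, hence $\Im u\equiv0$ on $\R_{+}$. Thus $u=e^{i\theta_{+}}v_{+}$ on $\R_{+}$ with $v_{+}$ real, and likewise $u=e^{i\theta_{-}}v_{-}$ on $\R_{-}$.

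Next I would use the first integral: on each half-line $(v_{\pm}')^2-\frac12(v_{\pm}^2-1)^2$ is conserved, and since $v_{\pm}$ is bounded with bounded second derivative and $v_{\pm}'\in L^2$, one gets $|v_{\pm}|\to1$ and $v_{\pm}'\to0$ at infinity, so the constant is $0$: $\sqrt2\,v_{\pm}'=\pm(v_{\pm}^2-1)$ on $\R_{\pm}$. Now two cases. If $u(0)=0$, then \eqref{eq:jump} reduces to $u'(0^+)=u'(0^-)$; since $v_{\pm}'(0^\pm)\neq0$ (otherwise $v_{\pm}\equiv0$ by ODE uniqueness), matching phases gives $u=e^{i\theta}w$ with $w$ real and $C^1$ across $0$, hence a global finite-energy solution of $w''+(1-w^2)w=0$ with $w(0)=0$; the first integral forces $w'(0)=\pm1/\sqrt2$, and solving $\sqrt2\,w'=\pm(1-w^2)$ yields $w=\pm\kappa$, i.e. $u=e^{i\theta}\kappa$. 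If $u(0)\neq0$, then continuity at $0$ already forces $e^{i\theta_+}v_+(0^+)=e^{i\theta_-}v_-(0^-)\in\R\setminus\{0\}$, so $\theta_+-\theta_-\in\{0,\pi\}$ and $u=e^{i\theta}v$ globally with $v$ real; set $p:=v(0)$, and assume $p>0$ (absorbing a sign in $\theta$). Evaluating the first integral at $0$ gives $v'(0^+)^2=v'(0^-)^2=\frac12(p^2-1)^2$, while \eqref{eq:jump} reads $v'(0^+)-v'(0^-)=\gamma p$; the case $v'(0^+)=v'(0^-)$ forces $\gamma p=0$, impossible for $\gamma\neq0$, so $v'(0^+)=-v'(0^-)=\gamma p/2$ and $\gamma^2p^2=2(p^2-1)^2$.

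Since $x\mapsto v(-x)$ then solves the same equation with the same Cauchy data at $0^+$, uniqueness shows $v$ is even, hence determined by its restriction to $\R_{+}$, which must be one of the explicit profiles $\pm\tanh((\cdot-x_0)/\sqrt2)$ or $\pm\coth((\cdot-x_0)/\sqrt2)$ solving $\sqrt2\,v'=\pm(v^2-1)$ — the constants $\pm1$ being excluded because a constant half-line is incompatible with \eqref{eq:jump} when $\gamma\neq0$. Which profile occurs is dictated by the signs of $v'(0^+)=\gamma p/2$ and of $p^2-1$, together with boundedness at $+\infty$ (in particular an increasing solution staying in $\{|v|>1\}$ is ruled out, so $v^2>1$ only through the decreasing $\coth$ branch). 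A short discussion on $\operatorname{sign}\gamma$ gives: for $\gamma>0$ only $p^2<1$ survives, yielding the increasing $\tanh$-profile $v=b_{\gamma}$; for $\gamma<0$ both $p^2>1$ and $p^2<1$ survive, yielding respectively the $\coth$-profile $v=\tilde b_{\gamma}$ and a sign of the $\tanh$-profile $v=b_{\gamma}$. Inserting $v(0)=p$ back into \eqref{eq:jump} pins the translation parameter exactly to $c_{\gamma}$ through the hyperbolic identity $\sinh z\cosh z=\mp\sqrt2/\gamma$ with $z=c_{\gamma}/\sqrt2$; absorbing signs in the global phase $e^{i\theta}$ then yields precisely the stated sets.

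The step I expect to be the main obstacle is the bookkeeping in this last part when $\gamma\neq0$: one must track carefully which branch of $\sqrt2\,v'=\pm(v^2-1)$ is selected at $0^+$, whether $v$ stays in $(-1,1)$ or in $\{|v|>1\}$, discard the configurations incompatible with $u\in\Ec$ (a constant half-line, an increasing $\coth$, a $\tanh$ with the wrong sign of $v(0)$), and then extract the exact value $c_{\gamma}$ from the jump relation. The phase-reduction step and the first-integral computations are essentially routine, but must be handled with some care since $\Ec$ is not a vector space and $u$ is a priori only a weak solution of \eqref{SGPdelta}.
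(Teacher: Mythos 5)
Your proposal follows essentially the same route as the paper's proof: reduce to the ODE on each half-line, kill the angular momentum (your $W$, the paper's $\theta'\rho^2$) using $u'\in L^2$ and $|u|\to1$ to make the phase constant, integrate the zero-energy first integral to obtain the $\tanh$/$\coth$ profiles, and match at the origin via continuity and the jump condition. The only differences are cosmetic (a Cartesian Wronskian plus linear-ODE uniqueness for $\Im u$ instead of polar coordinates, and a reflection argument for evenness instead of directly matching the two explicit half-line profiles), and the ``bookkeeping'' you flag as the main obstacle is exactly the case analysis the paper carries out around \eqref{eq:case1}--\eqref{eq:case2}.
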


Some preparation is in order. We first recall that $u \in \Ec$ is said to be a solution of \eqref{SGPdelta} if for all $\vf \in C_0^\infty(\R)$ we have 
\begin{equation} \label{D'solution}
  \int_{\R}u' \overline \vf' + \g u(0) \overline \vf(0) - \int_{\R} (1-|u|^2)u \overline \vf  =0.
\end{equation}
By elliptic regularity, such solutions are in fact smooth, except at
the origin, where they satisfy the jump condition.

\begin{lemma}\label{PropositionPropertiesOfSolutions}
  Let $\gamma\in\R\setminus\{0\}$ and $u \in \Ec$ be a solution of
  \eqref{SGPdelta}. Then
  \begin{eqnarray*}
    &u\in  C^{\infty}(\R\setminus\{0\})\cap C^0(\R),\label{regularity}\\
    & u''+(1- \abs u^2)u=0 \quad \text{on } \R \setminus \{0\},\label{eqhalfline}\\
    & u'(0^+)-u'(0^-)=\gamma u(0). \label{jumpCondition}
  \end{eqnarray*}
\end{lemma}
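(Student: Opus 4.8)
The plan is to prove Lemma~\ref{PropositionPropertiesOfSolutions} by a standard bootstrap/elliptic regularity argument, treating the two half-lines $(-\infty,0)$ and $(0,+\infty)$ separately and then patching at the origin with the jump condition. The key observation is that on each half-line the distributional equation \eqref{D'solution} contains no Dirac mass, so $u$ solves an ordinary semilinear ODE there and classical interior regularity applies; the only subtlety is at $x=0$, where continuity (but not differentiability) is available and the jump condition must be read off from \eqref{D'solution}.

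\emph{First step: continuity and boundedness.} Since $u\in\Ec$, Lemma~\ref{lem-abs-Ec} already gives that $u$ is (uniformly) continuous and bounded on $\R$, with $|u(x)|\to 1$ as $|x|\to\infty$; in particular $u\in C^0(\R)$ is recorded for free and $(1-|u|^2)u\in L^\infty_{\mathrm{loc}}(\R)$.

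\emph{Second step: smoothness away from the origin.} Fix an open interval $I\Subset\R\setminus\{0\}$. Testing \eqref{D'solution} against $\vf\in C_0^\infty(I)$ (for which $\vf(0)=0$) shows that $u''=(1-|u|^2)u$ holds in $\mathcal D'(I)$. Since the right-hand side is in $L^\infty_{\mathrm{loc}}\subset L^2_{\mathrm{loc}}$, we get $u\in H^2_{\mathrm{loc}}(I)$, hence $u\in C^1(I)$ by Sobolev embedding; then $(1-|u|^2)u\in C^1(I)$, so $u''\in C^1(I)$, i.e. $u\in C^3(I)$, and iterating the bootstrap yields $u\in C^\infty(I)$. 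As $I$ was arbitrary, $u\in C^\infty(\R\setminus\{0\})$ and $u''+(1-|u|^2)u=0$ pointwise on $\R\setminus\{0\}$.

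\emph{Third step: the jump condition.} Now $u'$ extends continuously up to $0$ from each side, so $u'(0^+)$ and $u'(0^-)$ are well defined. Take $\vf\in C_0^\infty(\R)$ arbitrary. Split the integral $\int_\R u'\overline{\vf'}=\int_{-\infty}^0+\int_0^{+\infty}$ and integrate by parts on each half-line, using that $u$ is $C^\infty$ up to the boundary there and $u''=(1-|u|^2)u$:
\[
\int_\R u'\overline{\vf'}
= -u'(0^+)\overline{\vf(0)}+u'(0^-)\overline{\vf(0)}+\int_\R u''\overline\vf
= \big(u'(0^-)-u'(0^+)\big)\overline{\vf(0)}+\int_\R (1-|u|^2)u\,\overline\vf .
\]
Substituting this into \eqref{D'solution} gives $\big(u'(0^-)-u'(0^+)\big)\overline{\vf(0)}+\g u(0)\overline{\vf(0)}=0$ for all $\vf$; choosing $\vf$ with $\vf(0)=1$ yields $u'(0^+)-u'(0^-)=\g u(0)$. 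The main (mild) obstacle is simply the bookkeeping at the origin: one must be careful that the integration by parts on each half-line is justified — which it is, since by the second step $u\in C^\infty$ on a neighborhood of each point of $[0,+\infty)\cap\supp\vf$ minus $\{0\}$ and $u'$ has one-sided limits at $0$, so the boundary terms are genuine limits rather than formal expressions. No further compactness or uniqueness input is needed.
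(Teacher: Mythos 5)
Your proof is correct and follows essentially the same route as the paper's (continuity from Lemma \ref{lem-abs-Ec}, testing against $\vf$ supported away from the origin plus an interior bootstrap to get smoothness, then an integration by parts against a general $\vf$ to read off the jump condition); the paper merely states these steps more tersely. The only blemish is a sign slip in your displayed intermediate identity --- the half-line integrations by parts produce $-\int_\R u''\overline{\vf}$, not $+\int_\R u''\overline{\vf}$ --- but since $u''=-(1-\abs{u}^2)u$ the two sign errors cancel and your final line and conclusion are correct.
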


\begin{proof} 
  The continuity of $u$ is given by Lemma \ref{lem-abs-Ec}. From \eqref{D'solution} applied with $\vf \in C_0^\infty(\R \setminus \{0\})$ we deduce that  $u\in\mathcal{E}$ is a solution of
  \begin{equation} \label{eq-hors-0}
    u''+(1-|u|^2)u=0
  \end{equation}
  in the sense of distributions on $\R \setminus \{0\}$. This implies that $u$ is in fact smooth and a classical solution of this equation on $\R\setminus\{0\}$. Finally, we consider $\vf \in C_0^\infty(\R)$ with $\vf(0) = 1$. Starting from \eqref{D'solution} and using \eqref{eq-hors-0} after an integration by parts gives the jump condition and concludes the proof of the lemma.
\end{proof}

Let us now determine what are the finite energy solutions on the half-line.

\begin{lemma}\label{PropositionExplicitSolutionGamma=0} Assume that $u \in \Ec$ is a solution to
  \begin{equation}\label{SGP}
    u''+ (1-|u|^2)u=0,\quad  \text{on}\quad(0,+\infty).
  \end{equation}
  Then there exist $\theta\in\R$ and $c\in\R$ such that either $u(x) = e^{i\th}$ for all $x\in(0,+\infty)$, or
{
  \begin{equation} \label{sol-tanh}
\begin{cases}    
&\forall x \in (0,+\infty), \quad   u(x)= e^{i\theta}
\tanh\left(\frac{x- c}{\sqrt{2}} \right),\\
c<0,&\forall x \in (0,+\infty), \quad   u(x)= e^{i\theta}
\coth\left(\frac{x- c}{\sqrt{2}} \right).\\
\end{cases}
  \end{equation}
}
  The same conclusion holds if we replace $(0,+\infty)$ by
  $(-\infty,0)$ {and $c<0$ by $c>0$}.
\end{lemma}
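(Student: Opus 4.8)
The plan is to use the two first integrals of \eqref{SGP} to reduce the problem to an explicitly solvable scalar autonomous ODE. As in the proof of Lemma~\ref{PropositionPropertiesOfSolutions}, $u$ is a classical $C^\infty$ solution of \eqref{SGP} on $(0,+\infty)$, and since $u\in\Ec$ is bounded (Lemma~\ref{lem-abs-Ec}) the function $u''=-(1-\abs u^2)u$ is bounded, so $u$ is genuinely defined on all of $(0,+\infty)$ with $u,u'$ bounded. Multiplying \eqref{SGP} by $\overline{u'}$ and taking the real part shows that $\frac12\abs{u'}^2-\frac14(1-\abs u^2)^2$ is constant, while multiplying by $\bar u$ and taking the imaginary part shows that $\Im(\bar uu')$ is constant. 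To evaluate these constants I would use that $u'\in L^2(0,+\infty)$ and $u''$ is bounded, so $u'$ is uniformly continuous and hence $u'(x)\to0$ as $x\to+\infty$; together with $\abs{u(x)}\to1$ (Lemma~\ref{lem-abs-Ec}) this forces
\[
\tfrac12\abs{u'}^2=\tfrac14(1-\abs u^2)^2\qquad\text{and}\qquad\Im(\bar uu')=0\qquad\text{on }(0,+\infty).
\]

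Next I would use these identities to fix the phase, distinguishing two cases. If $u(x_0)=0$ for some $x_0\in(0,+\infty)$, the first identity gives $\abs{u'(x_0)}=1/\sqrt2$, so $u'(x_0)=\frac1{\sqrt2}e^{i\theta}$ for some $\theta\in\R$; since the map $z\mapsto-(1-\abs z^2)z$ is smooth on $\C\simeq\R^2$, the Picard--Lindel\"of uniqueness theorem applied to $e^{-i\theta}u$ and the real solution $x\mapsto\tanh\!\big(\frac{x-x_0}{\sqrt2}\big)$, which share the same Cauchy data at $x_0$, yields $u(x)=e^{i\theta}\tanh\!\big(\frac{x-x_0}{\sqrt2}\big)$ on $(0,+\infty)$. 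If $u$ never vanishes on $(0,+\infty)$, fix any $x_1$ and write $u(x_1)=\abs{u(x_1)}e^{i\theta}$; the vanishing of $\Im(\bar uu')$ at $x_1$ means $u'(x_1)$ is a real multiple of $u(x_1)$, so $e^{-i\theta}u$ and the real solution of \eqref{SGP} with the matching data at $x_1$ coincide by the same uniqueness argument. Hence $u=e^{i\theta}\rho$ with $\rho$ real-valued; being nonvanishing on an interval, $\rho$ has constant sign, and after replacing $\theta$ by $\theta+\pi$ if necessary we may assume $\rho>0$.

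It then remains to solve $\rho''+(1-\rho^2)\rho=0$ on $(0,+\infty)$ with $\rho>0$, $\rho'^2=\frac12(1-\rho^2)^2$ and $\rho(x)\to1$ as $x\to+\infty$. If $\rho\equiv1$ then $u=e^{i\theta}$. Otherwise $\rho=1$ at some point would entail $\rho'=0$ there, hence $\rho\equiv1$ by uniqueness; so $1-\rho^2$ keeps a constant sign, $\rho'=\varepsilon\frac1{\sqrt2}\abs{1-\rho^2}$ with $\varepsilon\in\{-1,1\}$ fixed, and the limit $\rho\to1$ excludes $\rho$ decreasing with $\rho<1$ and $\rho$ increasing with $\rho>1$. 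Integrating $\int\frac{d\rho}{1-\rho^2}$ in the two remaining cases then gives $\rho(x)=\tanh\!\big(\frac{x-c}{\sqrt2}\big)$ when $0<\rho<1$ and $\rho(x)=\coth\!\big(\frac{x-c}{\sqrt2}\big)$ when $\rho>1$, for some $c\in\R$; the requirement $u'\in L^2$ near $0$ forces $c<0$ in the $\coth$ case (if $c=0$ then $\coth\!\big(\frac x{\sqrt2}\big)\sim\sqrt2/x$ near $0$, with non-square-integrable derivative, and $c>0$ would place the pole inside $(0,+\infty)$), which is the restriction appearing in \eqref{sol-tanh}. Finally, the assertion on $(-\infty,0)$ follows by applying the above to $v(x)=u(-x)$, which solves the same equation on $(0,+\infty)$ and turns the condition $c<0$ into $c>0$.

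The routine parts are the explicit integration and the verification that $\tanh$ and $\coth$ solve \eqref{SGP}. The points requiring some care are the justification that the two first integrals vanish at $+\infty$ (through the uniform-continuity argument for $u'$, which rests on the boundedness of $u''$ and on Lemma~\ref{lem-abs-Ec}), and the case split according to whether $u$ vanishes; I expect the latter to be the main conceptual obstacle, and it is handled cleanly once one observes that the conserved angular momentum $\Im(\bar uu')$ is zero, so that in both cases $u$ is, up to a constant phase, a real-valued solution to which ODE uniqueness applies.
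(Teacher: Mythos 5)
Your proof is correct, and it follows the same overall strategy as the paper (first integrals of the ODE, evaluation of the constants at infinity, then explicit integration of the resulting scalar equation), but the middle step -- showing the phase is constant -- is handled by a genuinely different mechanism. The paper restricts to a half-line $(A,+\infty)$ where $u$ does not vanish, writes $u=e^{i\theta(x)}\rho(x)$ in polar form, derives $\theta'\rho^2\equiv\widetilde K$, shows $\widetilde K=0$ from $u'\in L^2$, and only at the end extends back to $(0,+\infty)$ by Cauchy--Lipschitz; you instead keep the angular momentum in Cartesian form, $\Im(\bar u u')\equiv 0$, and invoke ODE uniqueness directly (matching $e^{-i\theta}u$ against an explicit real solution at a zero of $u$, or against a real solution with the same real Cauchy data when $u$ never vanishes). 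Your version avoids the polar decomposition and the detour through the auxiliary half-line, at the mild cost of having to justify $u'(x)\to 0$ at infinity via boundedness of $u''$ and uniform continuity (a Barbalat-type argument); the paper avoids that by noting instead that $\tfrac12\abs{u'}^2$ converges to the constant $K$, which together with $u'\in L^2$ forces $K=0$, and the analogous squeeze $\abs{u'}^2\geq(\theta')^2\rho^2\to\widetilde K^2$ for the angular constant. Both routes are sound; your treatment of the $\coth$ branch and of the sign constraint $c<0$ (ruling out $c=0$ by non-square-integrability of the derivative near the pole, and $c>0$ by the pole entering the domain) matches what the lemma requires, and the reflection argument for $(-\infty,0)$ is the standard one.
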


\begin{proof}
  Equation \eqref{SGP} may be integrated using standard arguments from ordinary differential equations, which we recall now.

  Multiplying the equation by $ u'$ and taking the real part
  we obtain
  \[
  \frac{d}{dx}\left(\frac{1}{2}|u'|^2-\frac{1}{4}\left(1-|u|^2 \right)^2 \right)=0,
  \]
  so there exists $K\in\R$ such that
  \[
  \frac{1}{2}|u'|^2-\frac{1}{4}\left(1-|u|^2 \right)^2\equiv K.
  \]
  By Lemma \ref{lem-abs-Ec}, $\frac{1}{2}|u'(x)|^2$ goes to $K$ as $x$ goes to $+\infty$. Since $u' \in L^2(\R)$, we necessarily have $K=0$, so
  \begin{equation}\label{ide}
    \frac{1}{2}|u'|^2-\frac{1}{4}\left(1-|u|^2 \right)^2\equiv 0.
  \end{equation}
  If $|u(x_0)|=1$ for some $x_0 > 0$, then $|u'(x_0)|=0$ and by uniqueness we have $u\equiv C$ where $|C|=1$. Now we assume that $|u(x)|\neq 1$ for every $x\in(0,+\infty)$.
  Since $|u(x)|$ goes to 1 as $x$ goes to $+\infty$,  there exists $A\geq 0$ such that $|u(x)|>0$ for $|x|>A$. Therefore  we may write $u(x):=e^{i\theta(x)}\rho(x)$ for $x> A$, where $\theta,\rho\in  C^2$ and $\rho>0$. Writing down the system of equations satisfied by $\theta$ and $\rho$ we see in particular that 
  \[
  \theta''\rho+2\theta'\rho'\equiv 0\quad\text{ for }x>A,
  \]
  which implies that 
  \[
  \frac{d}{dx}(\theta'\rho^2)\equiv 0\quad\text{ on } (A,+\infty).
  \]
  Therefore there exists $\widetilde K\in\R$ such that $\theta'\rho^2\equiv \widetilde K$ for $x>A$ .
  Since $\rho(x)\rightarrow 1$, as $x\rightarrow +\infty$, it follows that $\theta'(x)\rightarrow \widetilde K$, and hence
  \[
  |u'(x)|^2=(\rho')^2+(\theta')^2\rho^2\ \geq\ (\theta')^2\rho^2 \limt x {+\infty} \widetilde K^2.
  \]
  As above it follows that $\widetilde K=
  0 $. As a consequence $\theta'\equiv 0$ on $(A,+\infty)$, so there exists $\theta_0\in\R$ such that 
  \[
  \forall x > A, \quad u(x) = e^{i\theta_0}\rho(x).
  \]
  Since $|u|\neq 1$ on $(0,+\infty)$, we infer from \eqref{ide} that
  \[
  \frac{\rho'}{(1-\rho^2) }=\pm\frac{1}{\sqrt{2}} \quad \text{on }(A,+\infty).
  \] 
  By explicit integration, there exists $c \in \R$ such that for $x > A$ we have
  \[
  \text{ either }  \rho(x)=\tanh \left( \pm\frac{x-c}{\sqrt{2}}\right)
    \text{ or } \rho(x)=\coth \left( \pm\frac{x-c}{\sqrt{2}}\right),\;c<A.
  \]
  Since $\tanh$ {and $\coth$ are} odd, up to replacing $\theta_0$ by $\theta_0+\pi$ we have either
  \[
  u(x) = e^{i\theta_0} \tanh\left(\frac{x-c}{\sqrt{2}} \right) \quad \text{on }(A,+\infty),
  \]
  or
   \[
  u(x) = e^{i\theta_0} \coth\left(\frac{x-c}{\sqrt{2}} \right),\ c<A, \quad \text{on }(A,+\infty).
  \]
  By the Cauchy Lipschitz Theorem we can take $A=0$.
\end{proof}

\begin{proof}[Proof of Proposition \ref{prop:existence}]
  Let $u$ be a  finite-energy solution to \eqref{SGPdelta}. From Lemma \ref{PropositionPropertiesOfSolutions} and from the characterization given by Lemma \ref{PropositionExplicitSolutionGamma=0}, $u$ is either constant with modulus 1 or of the form \eqref{sol-tanh} on each side of the origin. Assume by contradiction that $u$ is constant on $(-\infty,0)$. By continuity, $\abs{u(0)} = 1$ and $u$ is also constant on $(0,+\infty)$. This gives a contradiction with the jump condition. Thus $u$ is of the form \eqref{sol-tanh} on $(-\infty,0)$. By continuity (or by a similar argument), $u$ is also of the form \eqref{sol-tanh} on $(0,+\infty)$.
{
More precisely, there exist $\th_-,\th_+,c_-,c_+ \in \R$ such that for $\pm x > 0$ we have either
  \begin{equation}\label{eq:case1}
  u(x) = e^{i\th_\pm} \tanh \left( \frac {x - c_\pm} {\sqrt 2}
  \right),
  \end{equation}
or
\begin{equation}\label{eq:case2}
c_+<0,\, c_->0,\,u(x)=e^{i\th_\pm} \coth \left( \frac {x - c_\pm} {\sqrt 2}  \right).
\end{equation}
}

Assume first that \eqref{eq:case1} holds.
  By continuity at the origin we have $e^{i\th_+} = e^{i\th_-}$ or $e^{i\th_+} = - e^{i\th_-}$. In the first case we necessarily have $c_+ = c_-$. And with the jump condition we see that in fact $c_+ = c_- = 0$, so
  \begin{equation}    \label{FirstSolution}
    \forall x \in \R, \quad u(x) = e^{i\th_+} \tanh\left( \frac x {\sqrt 2} \right).
  \end{equation}
  If $e^{i\th_+} = -e^{i\th_-}$ then by continuity we have $c : = c_+ = -c_-$. Thus
  \begin{equation} \label{SecondSolution}
    \forall x \in \R, \quad u(x) = e^{i\th_+} \tanh\left( \frac {\abs x - c} {\sqrt 2} \right).
  \end{equation}
  Since $u$ is even, the jump condition reads $2u'(0^+)=\gamma u(0)$. More explicitly we have
  \[
  \sqrt{2} \left(\cosh \left( \frac{-c}{\sqrt{2}}\right)\right)^{-2}=\gamma \tanh \left( \frac{-c}{\sqrt{2}}\right),
  \]
  so
  \[
  \sqrt{2}=\gamma\sinh \left( \frac{-c}{\sqrt{2}}\right)\cosh \left( \frac{-c}{\sqrt{2}}\right)=\frac{\gamma}{2}\sinh \left( -\sqrt{2}c\right)=-\frac{\gamma}{2}\sinh \left( \sqrt{2}c\right),
  \]
  and finally
  \begin{equation}
    \label{c_gamma}
    c =   c_{\gamma}:=\frac{1}{\sqrt{2}}\sinh^{-1}\left(-\frac{2\sqrt{2}}{\gamma} \right).
  \end{equation}
  Note that $c_{\gamma}>0$ if $\gamma<0$ and $c_{\gamma}<0$ if
  $\gamma>0$.

Assume now that \eqref{eq:case2} holds.
By continuity at the origin we again have $e^{i\th_+} = e^{i\th_-}$ or
$e^{i\th_+} = - e^{i\th_-}$, but this time, due to the singularity of
$\coth$ at $0$, only $e^{i\th_+} = -
e^{i\th_-}$ is possible. Arguing as previously, we find that
\eqref{eq:case2} is possible only if $\gamma<0$, and in that case
  \begin{equation} \label{ThirdSolution}
    \forall x \in \R, \quad u(x) = e^{i\th_+} \coth\left( \frac {\abs x +c_\gamma} {\sqrt 2} \right).
  \end{equation}
where $c_\gamma$ is as in \eqref{c_gamma}.

  In conclusion the functions given by \eqref{FirstSolution},
  \eqref{SecondSolution}, \eqref{c_gamma}, \eqref{ThirdSolution} are the only candidates to be
  finite-energy solutions to \eqref{SGPdelta}. Conversely we
  can verify directly that this is indeed the case, which concludes the proof. 
\end{proof}

\subsection{Variational Characterizations}

This section is devoted to the proof of Proposition
\ref{prop:minimization}. 
Let us recall that for  $\gamma\in\R\setminus\{ 0\}$ we have set
\[
m_{\gamma}:=\inf \{E_{\gamma}(v): v \in \Ec\} > - \infty,
\] 
and that we want to prove that the infimum is achieved at solutions to
\eqref{SGPdelta}. Precisely, we want to prove that
\[
\mathcal G_\gamma=\{ e^{i\theta}b_\gamma, \theta\in\R \},\text{ if }\gamma>0;\quad
\mathcal G_\gamma=\{ e^{i\theta}\tilde b_\gamma, \theta\in\R \},\text{ if }\gamma<0,
\]
where  we have defined 
\[
\mathcal G_\gamma:=\{ v\in\mathcal E,\quad E_\g(v)=m_\gamma \}.
\]
Finally, we also want to prove compactness of the minimizing
sequences, i.e. any minimizing sequence $(u_n)\subset \mathcal E$ such that $E(u_n)\to m_\gamma$ verifies, up to a subsequence,
\[
d_0(u_n,\mathcal G_\gamma)\to 0.
\]

\begin{proof}[Proof of Proposition \ref{prop:minimization}]
  We first remark that by Lemma \ref{lem-abs-Ec} the energy is bounded from below, so $m_\g$ is finite. Let $(v_n)_{n\in\mathbb N} \subset \Ec$ be a minimizing sequence, \ie
  \[
  E_{\gamma}(v_n)=\frac{1}{2}\norm{v_n'}_{L^2}^2+\frac{\gamma}{2}|v_n(0)|^2+\frac{1}{4}\norm{1-|v_n|^2}_{L^2}^2\rightarrow m_{\gamma}.
  \]
  By Lemma \ref{lem-abs-Ec} again, the sequence $(v'_n)$ is bounded in $L^2(\R)$. Since $L^2(\R)$ is a reflexive Banach space, there exists $g\in L^2(\R)$ such that, up to a subsequence, $v'_n\rightharpoonup g$ weakly in $L^2(\R)$. On the other hand, the sequence $(u_n(0))$ is also bounded, so $v_n$ is uniformly bounded in $H^1(I)$ for every bounded interval $I\subset\R$. Hence by Rellich compactness theorem there exists $f\in L^{\infty}_{\mathrm{loc}}(\R)$ such that, up to a subsequence, $v_n\rightarrow f$ in $L^{\infty}_{\mathrm{loc}}(\R)$. Since $H^1(I)$ is a reflexive Banach space,  there exists $u\in H^1_{\mathrm{loc}}(\R)$ such that up to a subsequence $v_n\rightharpoonup u$ in $H^1_{\mathrm{loc}}(\R)$.
  But then $g=u'\in L^2(\R)$, and $f=u$. Finally,
  \[
  v_n'\rightharpoonup u'\ \mbox{ in }\ L^2(\R) \quad \mbox{ and }\quad  v_n\rightarrow u \mbox{ in }L^{\infty}_{\mathrm{loc}}(\R).
  \] 
  By the weak-lower semicontinuity of the $L^2(\R)$-norm and Fatou lemma we have
  \begin{align*}
    E_{\gamma}(u)
    & =\frac{1}{2}\norm{u'}_{L^2}^2+\frac{\gamma}{2}\lim_{n\to+\infty}| v_n(0)|^2+\frac{1}{4}\int_{\R}\liminf_{n\to+\infty}\left(1-|v_n|^2\right)^2dx\\
    &\leq \liminf_{n\to+\infty}E_{\gamma}(v_n),
  \end{align*}
  so that $E_\g(u) = m_\g$. In particular $u\in\mathcal{E}$, and we easily see that $v_n\to u$ in $(\mathcal E,d_0)$.

  Now we show that this limit $u$ is a solution of \eqref{SGPdelta}. Let $\vf \in C_0^\infty(\R)$ and $t\in \R$. We have
  \begin{align*}
    0 
    & \leq \liminf_{t \to 0} \frac {E_\g(u + t \vf) - E_\g(u)} t\\ 
    & \leq \Re \left(\int_{\R} u'\bar\vf' dx + \gamma  u(0)\bar\vf(0) -\int_{\R}\left(1-|u|^2\right)u\bar\vf dx
      \right).
  \end{align*}
  Since the choice of $\vf$ is arbitrary (we can replace $\vf$ by $-\vf$ or $\pm i \vf$) we get for all $\vf \in C_0^\infty(\R)$
  \[
  \int_{\R} u'{\bar\vf}' dx+ \gamma  u(0){\bar\vf}(0) -\int_{\R}\left(1-|u|^2\right)u\bar\vf \, dx = 0.
  \]
  This is \eqref{D'solution}, which means that $u$ is a solution of \eqref{SGPdelta}.

  By Proposition \ref{prop:existence} there exists $\theta\in\R$ such that
  either $u=e^{i\theta}b_{\gamma}$, or $u=e^{i\theta}\tilde
    b_\gamma$, or $u=e^{i\theta}\kappa$.
  To conclude it is enough to show that:
  \begin{equation} \label{comparaison-energies}
    \begin{cases}
      E_{\gamma}(b_{\gamma})< E_{\gamma}(\kappa)&\text{ if } \gamma>0,\\
      E_{\gamma}(b_{\gamma})> E_{\gamma}(\kappa) >E(\tilde b_\gamma)&\text{ if } \gamma<0.
    \end{cases}
  \end{equation}
Since $\kappa$, $b_\gamma$ and $\tilde b_\gamma$ all satisfy for
$x\neq 0$ the equation
\[
u'=\frac{1}{\sqrt{2}}(1-u^2),
\] 
we have for all $\gamma\neq 0$
\begin{align*}
E_{\gamma}(\kappa)&=\int_0^{+\infty}\left(1-\kappa^2(x)\right)^2dx,\\
E_{\gamma}(b_\gamma)&=
             \int_{-c_\gamma}^{+\infty}\left(1-\kappa^2(x)\right)^2dx+\frac{\gamma}{2}\kappa^2(-c_\gamma),\\
E_{\gamma}(\tilde b_\gamma)&=
                    \int_{c_\gamma}^{+\infty}\left(1-\coth^2\left(\frac{x}{\sqrt{2}}\right)\right)^2dx+\frac\gamma2\coth^2\left(\frac
                   { c_\gamma}{\sqrt{2}}\right).
\end{align*}
For $x \in \R$ we set $\f(x) = \tanh(x/\sqrt 2)$. With a partial integration we compute for $\a \in \R$
\begin{eqnarray} \label{IPP-alpha}
\lefteqn{\int_{\a}^{+\infty} \left( 1 - \tanh^2\left(\frac x {\sqrt 2} \right) \right) \, dx = 2\int_{\a}^{+\infty} \f'(x)^2 \, dx}\\
\nonumber && = - \sqrt 2 \tanh\left( \frac \a {\sqrt 2} \right) \left(1-\tanh^2\left( \frac \a {\sqrt 2} \right) \right) + 2 \int_{\a}^{+\infty} \tanh^2\left(\frac x {\sqrt 2} \right)\tanh'\left(\frac x {\sqrt 2} \right) \, dx\\
\nonumber && = - \sqrt 2 \tanh\left( \frac \a {\sqrt 2} \right) \left(1-\tanh^2\left( \frac \a {\sqrt 2} \right) \right) + \frac {2 \sqrt 2} 3 - \frac {2 \sqrt 2} 3  \tanh^3\left( \frac \a {\sqrt 2} \right).
\end{eqnarray}
With $\a = 0$ we obtain 
\begin{equation} \label{energy-kappa}
E_{\gamma}(\kappa) = \frac {2\sqrt 2}3.
\end{equation}
Now let $\g \neq 0$. Using the identity 
\[
\tanh\left( \frac \a 2 \right) = \frac {\sinh (\a)} 2 \left( 1 - \tanh \left( \frac \a 2 \right)^2 \right)
\]
we obtain 
\begin{equation} \label{eq-Th}
\Theta_\gamma = \frac {\sqrt 2} {\gamma} (1- \Theta_\gamma^2), \quad \text{where }\Theta_\g = \tanh \left( - \frac {c_\gamma}{\sqrt 2} \right).
\end{equation}
Notice that since $\Theta_\gamma$ and $\gamma$ have the same sign we obtain
\begin{equation*}
\Theta_\gamma = \frac {\sign(\gamma)}{2\sqrt 2} \left( \sqrt{\gamma^2 + 8} - \abs \gamma \right).
\end{equation*}
By \eqref{IPP-alpha}, \eqref{energy-kappa} and \eqref{eq-Th} we have
\[
E_\gamma(b_\gamma) - E_\gamma(\kappa) = - \Theta_\gamma^2 \left( \frac \gamma 2 + \frac {2 \sqrt 2} 3 \Theta_\gamma \right),
\]
which proves in particular that $E_\gamma(b_\gamma) - E_\gamma(\kappa)$ and $\gamma$ have opposite signs. 
It remains to consider $E_\gamma(\tilde b_\gamma)$ when $\gamma<0$. 
Define 
\[
\tilde \Theta_\gamma=\coth\left(\frac{c_\gamma}{\sqrt{2}}\right)=-\Theta_\gamma^{-1}.
\]
By \eqref{eq-Th}, we also have
\begin{equation}\label{eq-tilde-Th}
\tilde\Theta_\gamma=\frac{\sqrt{2}}{\gamma}\left(1-\tilde\Theta_\gamma^2\right)
\end{equation}
As before, we obtain
\[
\int_{c_\gamma}^{+\infty}\left(1-\coth\left(\frac{x}{\sqrt{2}}\right)\right)^2dx=-\gamma\tilde\Theta_\gamma-\frac{2\sqrt{2}}{3}\tilde\Theta_\gamma^3+\frac{2\sqrt{2}}{3}
\]
Using \eqref{eq-tilde-Th} to linearize $\tilde \Theta_\gamma^3$, we
get
\[
E_\gamma(\tilde b_\gamma)=\frac\gamma6-\frac{2\sqrt{2}}{3}\left(\tilde\Theta_\gamma-1\right)
\]
Since $\tilde\Theta_\gamma>1$, we have $E_\gamma(\tilde b_\gamma)<0$
and therefore
\[
E_\gamma(\tilde b_\gamma)<E_\gamma(\kappa).
\]
  The alternative \eqref{comparaison-energies} follows, and Proposition \ref{prop:minimization} is proved.
\end{proof}

\section{Stability and Instability of the Black Solitons}
\label{sec:stability}

In this section we prove the orbital stability of the set of minimizers of the energy and the linear instability of the kink $\k$ when $\g > 0$.

\subsection{Stability of Black Solitons} \label{subsec:stability}

We begin with the proof of part (i) (Stability) in Theorem \ref{thm:instability}, which is a
consequence of Theorem \ref{th-pb-cauchy} and Proposition \ref{prop:minimization}.

\begin{proof}[Proof of part (i) (Stability) in Theorem \ref{thm:instability}]
  We argue by contradiction. Let $\eps>0$ and let $(u_{0,n})$ be a sequence of initial conditions in $\Ec$. For $n \in \N$ we denote by $u_n$ the solution of \eqref{eq:gp} for which $u_n(0) = u_{0,n}$. Then we assume by contradiction that
  \begin{equation*}
    \lim_{n\to+\infty}d_0(u_{0,n},\mathcal G_\gamma)=0
  \end{equation*}
  and
  \begin{equation*}
    \forall n \in \N, \exists t_n \in \R, \quad    d_0(u_n(t_n),\mathcal G_\gamma)> \eps.
  \end{equation*}
  By conservation of energy (see Theorem \ref{th-pb-cauchy}) we have
  \[
  E_\g(u_n(t_n)) = E_\g(u_{0,n}) \limt n \infty m_\g,
  \]
  and by the compactness of the minimizing sequences (see Proposition \ref{prop:minimization}) we deduce that, up to a subsequence,
  \[
  \lim_{n\to+\infty}d_0(u_n(t_n),\mathcal G_\gamma)=0.
  \]
  This gives a contradiction and finishes the proof.
\end{proof}

\subsection{Instability  of Black Solitons}

This section is devoted to the proof of part (ii) (Instability) of
Theorem \ref{thm:instability}. For this we have to prove that the
operator $L$ defined in \eqref{eq:def-L} has a negative eigenvalue.

We consider the selfadjoint operators defined on the domain $D(H_\g)$ (see \eqref{dom-Hg}) by
\[
L_-^\gamma = H_\gamma -(1-\kappa^2)\\
\quad \text{and} \quad
L_+^\gamma = H_\gamma +2-3(1-\kappa^2).
\]
These are the selfadjoint operators corresponding to the forms defined on $H^1(\R)$ by
\begin{gather*}
q_-^\gamma(u) = \nr{u'}_{L^2}^2 + \g \abs{u(0)}^2 - \int_\R(1-\kappa^2)|u|^2dx,
\\
q_+^\gamma(u) = \nr{u'}_{L^2}^2 + \g \abs{u(0)}^2  + 2 \nr{u}_{L^2}^2-3\int_\R(1-\kappa^2)|u|^2dx.
\end{gather*}

Separating the real and imaginary parts of $\eta$ in \eqref{eq-eta} gives the system
\[
\partial_t\binom{\Re(\eta)}{\Im(\eta)}+\mathcal L\binom{\Re(\eta)}{\Im(\eta)}+\mathcal N(\eta)=0,
\]
where 
\begin{equation*}
  \mathcal{L}=\begin{pmatrix}0 & -L_-^\gamma\\L^\gamma_+&0\end{pmatrix}
  \quad \text{and} \quad
  \mathcal N(\eta) = \binom{\Re(N(\eta))}{\Im(N(\eta))}.
\end{equation*}
Notice that a real eigenvalue of $\mathcal L$ is also an eigenvalue of
$L$, so the proof of part (ii) (Instability) in Theorem
\ref{thm:instability} reduces to proving that $\mathcal L$ has a real negative eigenvalue. We start by analyzing $L_+^\gamma$ and $L_-^\gamma$.

\begin{proposition}[Spectral properties of  $L_\pm^\gamma$] \label{prop-spectre-L}
  Let $\gamma \in \R$.
  \begin{enumerate}[(i)]
  \item The essential spectra of $L_-^\gamma$ and $L_+^\gamma$ are $\s_{\mathrm{ess}}(L_-^\gamma) = [0,+\infty)$ and $\s_{\mathrm{ess}}(L_+^\gamma) = [2,+\infty)$.
  \item The operator $L_-^\gamma$ has a trivial kernel and at least one negative eigenvalue.
  \item If $\gamma<0$ then $L_+^\gamma$ has a trivial kernel and a unique negative eigenvalue. If $\gamma = 0$, then 0 is the first eigenvalue of $L_+^0$. If $\gamma > 0$ then $L_+^\gamma$ has no eigenvalue in $(-\infty,0]$.
  \end{enumerate}
\end{proposition}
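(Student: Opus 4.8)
The plan is to reduce everything to the explicitly solvable case $\gamma=0$ and then transport the information to $\gamma\neq0$ by a perturbation-and-counting argument, as sketched in the introduction. For (i) I would invoke Weyl's theorem: $H_\gamma$ is a rank-one perturbation of $H_0$ in the resolvent sense (standard for $\delta$-interactions, see \cite{AlGeHoHo88}), so $\sigma_{\mathrm{ess}}(H_\gamma)=\sigma_{\mathrm{ess}}(H_0)=[0,+\infty)$; and multiplication by $1-\kappa^2=\sech^2(x/\sqrt2)$, which is bounded and tends to $0$ at infinity, is $H_\gamma$-compact (split it into a compactly supported part, handled by Rellich, plus a part small in $L^\infty$). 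Hence $L_-^\gamma$ and $L_+^\gamma$ are compact perturbations of $H_\gamma$ and of $H_\gamma+2$ in the resolvent sense, so $\sigma_{\mathrm{ess}}(L_-^\gamma)=[0,+\infty)$ and $\sigma_{\mathrm{ess}}(L_+^\gamma)=[2,+\infty)$.

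Next I would treat $\gamma=0$ by hand. After the change of variables $x=\sqrt2\,y$ the operators $2L_-^0$ and $2L_+^0$ become the Pöschl--Teller operators $-\partial_y^2-2\sech^2 y$ and $-\partial_y^2+4-6\sech^2 y$; equivalently one checks directly that $L_-^0\big(\sech(\cdot/\sqrt2)\big)=-\tfrac12\sech(\cdot/\sqrt2)$ and $L_+^0\kappa'=0$ with $\kappa'=\tfrac1{\sqrt2}\sech^2(\cdot/\sqrt2)$. Since $\sech(\cdot/\sqrt2)>0$ and $\kappa'>0$ these are ground states, so $\inf\sigma(L_-^0)=-\tfrac12$ (a genuine eigenvalue) and $\inf\sigma(L_+^0)=0$ (a genuine eigenvalue, so $0$ is indeed the first eigenvalue of $L_+^0$, which is the $\gamma=0$ case of (iii)); reading off the Pöschl--Teller spectrum (or counting zeros of the zero-energy solutions) shows $L_-^0$ has no further eigenvalue, in particular trivial kernel, and that $L_+^0\ge0$. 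This disposes of (ii) and (iii) at $\gamma=0$.

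For the kernel when $\gamma\neq0$ I would argue at the level of the ODE and the jump condition. If $L_-^\gamma u=0$, then on $(0,+\infty)$ the function $u$ solves $-u''-(1-\kappa^2)u=0$, whose solutions behave like $a+bx$ at $+\infty$ (as $1-\kappa^2$ decays exponentially); no nonzero such solution is square integrable, so $u\equiv0$. If $L_+^\gamma u=0$, then on $(0,+\infty)$ the square-integrable solutions of $-u''-(1-3\kappa^2)u=0$ form the line $\mathbb{R}\kappa'$ (recall $\kappa'\sim e^{-\sqrt2\,x}$), and by evenness of the potential the same holds on $(-\infty,0)$; since $\kappa'(0)\neq0$, continuity forces $u=a\kappa'$ on $\mathbb{R}$, which is smooth, so the jump condition gives $0=\gamma u(0)=\gamma a\kappa'(0)$, hence $a=0$. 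Thus $\ker L_\pm^\gamma=\{0\}$ for all $\gamma\neq0$.

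Finally I would count negative eigenvalues. For $L_+^\gamma$ with $\gamma>0$: since $L_+^0\ge0$ and $q_\gamma$ adds $\gamma|u(0)|^2\ge0$, we get $L_+^\gamma\ge0$, so with the triviality of the kernel there is no eigenvalue in $(-\infty,0]$. For $\gamma<0$: the test function $\kappa'$ gives $q_+^\gamma(\kappa')=q_+^0(\kappa')+\gamma|\kappa'(0)|^2=\gamma/2<0$, so $\inf\sigma(L_+^\gamma)<0<2=\inf\sigma_{\mathrm{ess}}(L_+^\gamma)$ and there is at least one negative eigenvalue; for uniqueness, $L_\pm^\gamma$ is a holomorphic family of type (B) (the forms $q_\pm^\gamma$ have common form domain $H^1(\mathbb{R})$ and depend linearly on $\gamma$), the simple isolated eigenvalue $0$ of $L_+^0$ moves with derivative $|\kappa'(0)|^2/\|\kappa'\|_{L^2}^2>0$ (hence becomes negative for $\gamma<0$ small and is then the only eigenvalue near $0$), and the number of eigenvalues of $L_+^\gamma$ in $(-\infty,0)$ cannot change for $\gamma\in(-\infty,0)$: it would require an eigenvalue to cross $0$, impossible by the triviality of the kernel, while no eigenvalue can reach $(-\infty,0)$ from $\sigma_{\mathrm{ess}}=[2,+\infty)$ without first crossing $0$. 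For $L_-^\gamma$ only \emph{one} negative eigenvalue is needed: for $\gamma\le0$, $q_-^\gamma\big(\sech(\cdot/\sqrt2)\big)\le q_-^0\big(\sech(\cdot/\sqrt2)\big)=-\tfrac12\|\sech(\cdot/\sqrt2)\|_{L^2}^2<0$; for $\gamma>0$, I would use the matching equation at the origin, namely that $\lambda<0$ is an eigenvalue of $L_-^\gamma$ if and only if $g(\lambda):=2(u_+^\lambda)'(0)/u_+^\lambda(0)=\gamma$, where $u_+^\lambda$ is the $L^2(0,+\infty)$ solution of $-u''-(1-\kappa^2)u=\lambda u$; here $g$ is continuous on $(-\infty,0)$ with $g(\lambda)\to-\infty$ as $\lambda\to-\infty$ and $g(\lambda)\to+\infty$ as $\lambda\to0^-$ (since $u_+^\lambda$ converges to the bounded zero-energy solution $\kappa$, which vanishes at $0$ with $\kappa'(0)\neq0$), so the intermediate value theorem furnishes a negative eigenvalue for every $\gamma$. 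The main obstacle is exactly this last point: showing that the bound state of $L_-^\gamma$ persists for all $\gamma>0$ (it is the zero-energy resonance $\kappa$ of $L_-^0$ being pulled into a genuine eigenvalue, which drifts towards $0$ as $\gamma\to+\infty$ without reaching it); the threshold analysis of $g$ — or, alternatively, the monotonicity of the half-line Weyl--Titchmarsh $m$-function — is where the real work lies, everything else being either the explicit $\gamma=0$ computation or routine perturbation theory.
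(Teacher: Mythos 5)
Your proposal is correct, and most of it runs parallel to the paper: essential spectra via Weyl's theorem, the explicit $\gamma=0$ eigenfunctions $\kappa'$ and $\sech(\cdot/\sqrt2)$, the Wronskian/explicit-solution arguments for triviality of $\ker L_\pm^\gamma$, and the analytic type (B) perturbation argument (simple eigenvalue $0$ of $L_+^0$ moving with derivative $|\kappa'(0)|^2/\|\kappa'\|_{L^2}^2$, plus a no-crossing counting argument) for the uniqueness of the negative eigenvalue of $L_+^\gamma$ when $\gamma<0$; your direct observation $L_+^\gamma\geq L_+^0\geq 0$ for $\gamma>0$ is in fact a cleaner way to finish that case than the paper's continuation argument. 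The one place where you genuinely diverge is the existence of a negative eigenvalue of $L_-^\gamma$ for $\gamma>0$, which you correctly identify as the crux. The paper handles it variationally: it builds an explicit trial function $v_r(x)=\kappa(|x|)\chi(x/r)+\frac{\sqrt2}{\gamma}\sech(x/\sqrt2)$, engineered so that the jump condition $2v_r'(0^+)=\gamma v_r(0)$ holds exactly, computes $q_-^\gamma(v_r)=-2\int_0^{\infty}(2\kappa'\chi_r'+\kappa\chi_r'')\overline{v_r}-\frac1\gamma\langle u_-^0,v_r\rangle$, and lets $r\to\infty$ to get a strictly negative value; this is short, self-contained, and needs no spectral theory beyond the min--max principle. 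Your route instead matches the logarithmic derivative $g(\lambda)=2(u_+^\lambda)'(0)/u_+^\lambda(0)$ of the decaying half-line solution against $\gamma$ and uses the intermediate value theorem, exploiting that $\kappa$ is a zero-energy resonance vanishing at the origin. This works and yields more (a bound state for every $\gamma>0$, drifting to the threshold as $\gamma\to+\infty$), but it silently uses several Sturm--Liouville facts that would need to be recorded: continuity of the Jost solution and its derivative up to the threshold $\lambda=0$ for the exponentially decaying potential, the absence of negative Dirichlet eigenvalues on the half-line (so that $u_+^\lambda(0)\neq0$ and, via oscillation theory, $u_+^\lambda(0)>0$, fixing the sign in $g(\lambda)\to+\infty$), and the identification of the threshold limit with $\kappa$. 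None of these is problematic, but together they make your argument heavier machinery for a statement the paper obtains with one explicit test function.
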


\begin{proof}
  We know that the essential spectrum of $H_\gamma$ is $[0,+\infty)$ (see Theorem 3.1.4 in \cite{AlGeHoHo88}). This implies in particular that the essential spectrum of $H_\gamma + 2$ is $[2,+\infty)$. Since 
  \[
  1-\kappa^2 (x) = \sech^2\left(\frac{x}{\sqrt{2}}\right) \limt {\abs x} {+\infty} 0,
  \]
  the first statement follows from Weyl Theorem.

  The forms $q_-^\gamma$ and $q_+^\gamma$ are analytic with respect to $\gamma$, so $L_-^\gamma$ and $L_+^\gamma$ define analytic families of operators of type B in the sense of Kato (see \S VII.4 in \cite{kato}). In particular, if $I$ is an open interval of $\R$ and $a,b \in \R$ are in the resolvent set of $L_+^\g$ for all $\g \in I$, the the spectral projection $\Pi_{a,b}^\gamma$ of $L_+^\gamma$ on $(a,b)$ is an analytic family of orthogonal projections, and the spectrum of the restriction of $L_+^\gamma$ on $\Pi_{a,b}^\gamma L^2(\R)$ is $\s(L_+^\gamma) \cap (a,b)$.

  For $\gamma=0$, we can check that the spectrum of $L_+^0$ is included in $[0,+\infty)$ and that 0 is a simple eigenvalue of $L_+^0$. Indeed, by differentiating with respect to $x$ the equation \eqref{SGPdelta} satisfied by $\kappa$, we see that $\kappa'$ belongs to the kernel of $L_+^0$. Since it takes positive values on $\mathbb R$, this implies that $0$ is simple and is the first eigenvalue of $L_+^0$. Similarly, we check by direct computation that $u_-^0 : x \mapsto \sech\left({x}/{\sqrt{2}}\right)$ takes positive values and is an eigenfunction for $L_-^0$ corresponding to the eigenvalue $-\frac12$.

  By analyticity of the spectrum of $L_+^\gamma$, there exist $\n > 0$ and two analytic functions $\l : (-\n,\n) \to \R$ and $u : (-\n,\n) \to L^2(\R)$ such that $\l(0) = 0$, $u(0) = \kappa'$ and, for all $\gamma \in (-\n,\n)$, $\l(\gamma)$ is the first eigenvalue of $L_+^\gamma$, it is simple, and $u(\gamma)$ is a corresponding eigenfunction. On the one hand we have
  \[
  \dual{L_+^\gamma u(\gamma)}{u(\gamma)}=\lambda(\gamma)\norm{u(\gamma)}_{L^2}^2= \gamma \lambda'(0) \norm{\kappa'}_{L^2}^2+O(\gamma^2).
  \]
  On the other hand
  \[
  \dual{L_+^\gamma u(\gamma)}{u(\gamma)}
  =\dual{L_+^0 u(\gamma)}{u(\gamma)}+\gamma |u(\gamma)(0)|^2=
  \gamma |\kappa'(0)|^2+O(\gamma^2),
  \]
  so
  \[
  \lambda'(0) = \frac{ |\kappa'(0)|^2}{\norm{\kappa'}_{L^2}^2} > 0.
  \]
  Thus $\lambda(\gamma)$ has the same sign as $\gamma$ for $|\gamma|$ small enough.

Let $\Gamma > 0$. Assume that $\g \in [-\G,0]$, $u \in D(\Hg)$ and $\l \in(-\infty,0]$ are such that $\nr{u}_{L^2} = 1$ and $L_+^\g u = \l u$. Since $\abs{u(0)}^2 \leq 2 \nr{u}\nr{u'}$ we have
\[
\l = q_+^\g(u) \geq  \nr{u'}_{L^2}^2 - \abs \g \abs{u(0)}^2 - 1 \geq \nr{u'}_{L^2}^2 - 2 \abs \g \nr{u'}_{L^2} - 1.
\]
This proves that there exists $C > 0$ such that for $\g \in [-\G,0]$ the operator $L_+^\g$ has no eigenvalue in $(-\infty,-C]$.

If we prove that the kernel of $L_+^\gamma$ is trivial for all $\gamma < 0$, this will imply that $\l$ extends to an analytic function on $(-\G,0)$ which gives the unique eigenvalue of $L_+^\g$ in $(-C , 0)$ and hence in $(-\infty,0)$. Indeed the projection $\Pi_{-C,0}^\gamma$ is analytic for $\g \in (-\Gamma,0)$. Since it is of rank 1 for $\abs \g$ small enough, it is of rank 1 for all $\g \in (-\Gamma,0)$, which means that $\s(L_+^\gamma) \cap (-C,0) = \s(L_+^\gamma) \cap (-\infty,0)$ consists on a simple eigenvalue. Since the choice of $\Gamma$ is arbitrary, this will prove that for any $\gamma < 0$ the operator $L_+^\g$ has a unique negative eigenvalue.

  So let $\gamma \in \R \setminus \{0\}$ and $v\in\ker(L_+^\gamma)$. Then $v$ satisfies 
  \[
  -v'' + 2v - 3(1-\kappa^2)v = 0 \quad \text{on }(0,+\infty).
  \]
  Since $\kappa'$ solves the same equation, there exists $C \in \R$ such that 
  \[
  v \kappa'' - v' \kappa' = C \quad \text{on } (0,+\infty).
  \]
  Then, since $v \in L^2(0,+\infty)$, this implies that there exists $\alpha \in \R$ such that $v = \alpha \kappa'$ on $(0,+\infty)$. Similarly, there exists $\beta \in\R$ such that $ v = \b \kappa'$ on $(-\infty,0)$. Since $v$ is continuous and $\kappa' (0) \neq 0$, we have $\a = \b$. And finally, the jump condition 
  \begin{equation*}
    v'(0^+) - v'(0^-) = \gamma v(0)
  \end{equation*}
  implies that $\a=\b=0$ and hence $v = 0$. This proves that $\ker(L_+^\gamma)=\{0\}$ and concludes the proof of the third statement.

  Now we check that we also have $\ker(L_-^\gamma)=\{0\}$ for any $\gamma\in\R$. Indeed, if $v$ satisfies the equation 
  \[
  -v'' - (1-\kappa^2)v = 0 \quad \text{on }(0,+\infty),
  \]
  then it is not hard to find out that there exist $C_1,C_2\in\R$ such that
  \[
  v(x)=C_1 \big( \sqrt{2}+x\kappa(x) \big)+C_2\kappa(x).
  \]
  Since $v \in L^2(\R)$, we necessarily have $(C_1,C_2) = (0,0)$, and hence $\ker(L_-^\gamma)=\{0\}$.

  It remains to show that $L_-^\gamma$ has at least one negative eigenvalue. For this we prove that there exists $v\in H^1(\R)$ such that
  $
  q_-^\g(v) < 0.
  $
  For $\gamma \leq 0$ we can take the eigenfunction $v= u_-^0$ of $L_-^0$. For $\gamma>0$, we need a more refined construction. Let $\h \in C_0^\infty(\R,[0,1])$ be equal to 1 on a neighborhood of 0. For $r \geq 1$ and $x \in \R$ we set $\chi_r(x) = \h(x/r)$ and
  \[
  v_r(x) = \k(\abs x) \h_r(x) + \frac {\sqrt 2} {\g} u_-^0(x) =  \k(\abs x) \h_r(x) + \frac {\sqrt 2} {\g} \sech \left(\frac{x}{\sqrt{2}}\right).
  \]
  We first remark that 
  \[
  v_r'(0^+) - v_r'(0^-) = 2 v_r'(0^+) = \g v_r(0),
  \]
  and then that $v_r \in D(H_\gamma)$ for all $r \geq 1$. Therefore
  \begin{align*}
    q_\gamma^-(v_r)
    & = \innp{-v_r''-(1-\k^2)v_r}{v_r}\\
& = 2 \int_0^{+\infty} \Big(\big(-\k'' - (1-\k^2) \k \big) \h_r -2 \k' \h_r' - \k \h_r'' \Big) \overline{v_r}  + \frac {\sqrt 2} \g \innp{L_-^0 u_-^0}{v_r}\\
    & = - 2 \int_0^{+\infty} (2 \k' \h_r' + \k \h_r'') \overline{v_r} - \frac 1 \g \innp{u_-^0}{v_r}.
  \end{align*}
  By the dominated convergence theorem we have
  \[
  \limsup_{r\to +\infty} q_\gamma^-(v_r) = \limsup_{r\to +\infty} \left( - \frac 1 \g \innp{u_-^0}{v_r} \right) < 0,
  \]
  so there exists $r \geq 1$ such that $q_\gamma^-(v_r) < 0$. This concludes the proof of the proposition.
\end{proof}

\begin{remark}
The number of negatives eigenvalue for
  $L_-^\gamma$ and  $L_+^\gamma$  gives no hint toward stability/instability,
unlike what
  was happening for the localized standing waves studied in \cite{LeFuFiKsSi08} where it was possible to appeal to Grillakis-Shatah-Strauss Theory.
\end{remark}

Now we can prove part (ii) (Instability) of Theorem \ref{thm:instability}.

\begin{proof}[Proof of part (ii) (Instability) of Theorem \ref{thm:instability}]
  Let $\g > 0$. We have to show that $\mathcal L$ has a real negative eigenvalue. Since the operator $L_+^\gamma$ is positive, we can set
  \[
  \Lambda=(L_+^\gamma)^{\frac 12}L_-^\gamma (L_+^\gamma)^{\frac12}.
  \]
  This defines a selfadjoint operator on the domain
  \[
  D(\Lambda)=\left\{ u\in
    D \big( (L_+^\gamma)^{\frac12} \big)\, : \,(L_+^\gamma)^{\frac12}u\in D(L_-^\gamma) \, \text{ and } \,L_-^\gamma
    (L_+^\gamma)^{\frac12}u\in D \big((L_+^\gamma)^{\frac12} \big)\right\}.
  \]

  Assume that $w \in D(\L) \setminus \{0\}$ and $\l \in \R \setminus \{0\}$ are such that $\L w = - \l^2 w$. Then we set $u = (L_+^\gamma)^{-\frac12} w$ and $v = \frac{1}{\lambda} (L_+^\gamma)^{\frac 12} w$ (notice that $(L_+^\gamma)^{-\frac12}$ is a bounded operator on $L^2(\R)$ since the spectrum of $L_+^\g$ is included in $[\nu,+\infty)$ for some $\nu > 0$). By construction we have $u \in D \big( (L_+^\gamma)^{\frac12} \big)$ and $(L_+^\gamma)^{\frac 12}u \in D \big((L_+^\gamma)^{\frac12} \big)$, so $u \in D(L_+^\gamma)$. We also have $v \in D(L_-^\gamma)$. Moreover, 
  \[
  -L_-^\gamma v = \lambda u
  \quad \text{and} \quad 
  L_+^\gamma u = \lambda v,
  \] 
  so $\lambda$ is an eigenvalue of $\mathcal L$. Thus it remains to prove that $\Lambda$ has a negative eigenvalue. For this we prove that its essential spectrum is non-negative while its full spectrum has a negative part.

  We denote by $\Pi_-^\g$ and $\Pi_+^\g$ the spectral projections of $L_-^\g$ on $(-\infty,0)$ and $[0,+\infty)$, respectively. Then we set $\L_\pm = (L_+^\gamma)^{\frac 12}\Pi_\pm^\gamma (L_+^\gamma)^{\frac12}$. By Proposition \ref{prop-spectre-L}, $\Pi_-^\g$ is of finite rank, so it is a compact operator from $L^2(\R)$ to $D(L_-^\g) = D(L_+^\g)$. This implies that $\L_-$ is a relatively compact perturbation of $\L$. Thus, by the Weyl Theorem, $\L$ and $\L_+$ have the same essential spectrum. But $\L_+$ is a non-negative operator, so $\s_{\mathrm{ess}}(\L) \subset [0,+\infty)$.

  Now let $\xi \in D(L_-^\gamma)$ be an eigenfunction corresponding to a negative eigenvalue $\lambda$ of $L_-^\gamma$ and $\eta =(L_+^\gamma)^{-\frac12} \xi$. Then $\eta \in D \big( (L_+^\g)^{\frac 12} \big)$ and $(L_+^\gamma)^{\frac12}\eta = \xi \in D(L_-^\gamma)$. Moreover, $L_-^\gamma (L_+^\gamma)^{\frac12}\eta = \lambda \xi \in D(L_-^\gamma) = D(L_+^\gamma)\subset D((L_+^\gamma)^{\frac12})$. Therefore, $\eta\in D(\Lambda)$ and  
  \[
  \dual{L\eta}{\eta}=\dual{L_-^\gamma\xi}{\xi}<0 .
  \]
  This implies that the selfadjoint operator $\Lambda$ has a negative eigenvalue, which concludes the proof of the linear instability of $\kappa$.
\end{proof}

\bibliographystyle{abbrv}
\bibliography{ianni-lecoz-royer}

\begin{thebibliography}{10}

\bibitem{AdCaFiNo14-2}
R.~Adami, C.~Cacciapuoti, D.~Finco, and D.~Noja.
\newblock Constrained energy minimization and orbital stability for the {NLS}
  equation on a star graph.
\newblock {\em Ann. Inst. H. Poincar\'e Anal. Non Lin\'eaire},
  31(6):1289--1310, 2014.

\bibitem{AdCaFiNo14}
R.~Adami, C.~Cacciapuoti, D.~Finco, and D.~Noja.
\newblock Variational properties and orbital stability of standing waves for
  {NLS} equation on a star graph.
\newblock {\em J. Differential Equations}, 257(10):3738--3777, 2014.

\bibitem{AdNo09}
R.~Adami and D.~Noja.
\newblock Existence of dynamics for a 1{D} {NLS} equation perturbed with a
  generalized point defect.
\newblock {\em J. Phys. A}, 42(49):495302, 19, 2009.

\bibitem{AdNo13}
R.~Adami and D.~Noja.
\newblock Stability and symmetry-breaking bifurcation for the ground states of
  a {NLS} with a {$\delta'$} interaction.
\newblock {\em Comm. Math. Phys.}, 318(1):247--289, 2013.

\bibitem{AdNoOr13}
R.~Adami, D.~Noja, and C.~Ortoleva.
\newblock Orbital and asymptotic stability for standing waves of a nonlinear
  {S}chr\"odinger equation with concentrated nonlinearity in dimension three.
\newblock {\em J. Math. Phys.}, 54(1):013501, 33, 2013.

\bibitem{AdNoVi13}
R.~Adami, D.~Noja, and N.~Visciglia.
\newblock Constrained energy minimization and ground states for {NLS} with
  point defects.
\newblock {\em Discrete Contin. Dyn. Syst. Ser. B}, 18(5):1155--1188, 2013.

\bibitem{AdamiSacchetti}
R.~{Adami} and A.~{Sacchetti}.
\newblock {The transition from diffusion to blow-up for a nonlinear
  Schr\"odinger equation in dimension 1.}
\newblock {\em {J. Phys. A, Math. Gen.}}, 38(39):8379--8392, 2005.

\bibitem{Ag07}
G.~Agrawal.
\newblock {\em Nonlinear fiber optics}.
\newblock Optics and Photonics. Academic Press, 2007.

\bibitem{albeveriobd94}
S.~{Albeverio}, Z.~{Brze\'zniak}, and L.~{D\c{a}browski}.
\newblock {Time-dependent propagator with point interaction.}
\newblock {\em {J. Phys. A, Math. Gen.}}, 27(14):4933--4943, 1994.

\bibitem{AlGeHoHo88}
S.~Albeverio, F.~Gesztesy, R.~H{\o}egh-Krohn, and H.~Holden.
\newblock {\em Solvable models in quantum mechanics}.
\newblock Texts and Monographs in Physics. Springer-Verlag, New York, 1988.

\bibitem{BaIg11}
V.~Banica and L.~I. Ignat.
\newblock Dispersion for the {S}chr\"odinger equation on networks.
\newblock {\em J. Math. Phys.}, 52(8):083703, 14, 2011.

\bibitem{BaIg14}
V.~Banica and L.~I. Ignat.
\newblock Dispersion for the {S}chr\"odinger equation on the line with multiple
  {D}irac delta potentials and on delta trees.
\newblock {\em Anal. PDE}, 7(4):903--927, 2014.

\bibitem{BaVi16}
V.~Banica and N.~Visciglia.
\newblock Scattering for {NLS} with a delta potential.
\newblock {\em J. Differential Equations}, 260(5):4410--4439, 2016.

\bibitem{BeGrSa08}
F.~B{\'e}thuel, P.~Gravejat, and J.-C. Saut.
\newblock Existence and properties of travelling waves for the
  {G}ross-{P}itaevskii equation.
\newblock In {\em Stationary and time dependent {G}ross-{P}itaevskii
  equations}, volume 473 of {\em Contemp. Math.}, pages 55--103. Amer. Math.
  Soc., Providence, RI, 2008.

\bibitem{BeGrSaSm08}
F.~B{\'e}thuel, P.~Gravejat, J.-C. Saut, and D.~Smets.
\newblock Orbital stability of the black soliton for the {G}ross-{P}itaevskii
  equation.
\newblock {\em Indiana Univ. Math. J.}, 57(6):2611--2642, 2008.

\bibitem{BeGrSm12-1}
F.~B{\'e}thuel, P.~Gravejat, and D.~Smets.
\newblock Stability in the energy space for chains of solitons of the
  one-dimensional {G}ross-{P}itaevskii equation.
\newblock {\em Ann. Inst. Fourier (Grenoble)}, 64(1):19--70, 2014.

\bibitem{BeGrSm12-2}
F.~Bethuel, P.~Gravejat, and D.~Smets.
\newblock Asymptotic stability in the energy space for dark solitons of the
  {G}ross-{P}itaevskii equation.
\newblock {\em Ann. Sci. \'Ec. Norm. Sup\'er. (4)}, 48(6):1327--1381, 2015.

\bibitem{CaMa95}
X.~D. Cao and B.~A. Malomed.
\newblock Soliton-defect collisions in the nonlinear {S}chr\"odinger equation.
\newblock {\em Phys. Lett. A}, 206(3-4):177--182, 1995.

\bibitem{Ca03}
T.~Cazenave.
\newblock {\em Semilinear {S}chr\"odinger equations}.
\newblock New York University -- Courant Institute, New York, 2003.

\bibitem{CaLi82}
T.~Cazenave and P.-L. Lions.
\newblock Orbital stability of standing waves for some nonlinear
  {S}chr\"odinger equations.
\newblock {\em Comm. Math. Phys.}, 85(4):549--561, 1982.

\bibitem{ChGuNaTs07}
S.-M. Chang, S.~Gustafson, K.~Nakanishi, and T.-P. Tsai.
\newblock Spectra of linearized operators for {NLS} solitary waves.
\newblock {\em SIAM J. Math. Anal.}, 39(4):1070--1111, 2007/08.

\bibitem{Ch12}
D.~Chiron.
\newblock Travelling waves for the nonlinear {S}chr\"odinger equation with
  general nonlinearity in dimension one.
\newblock {\em Nonlinearity}, 25(3):813--850, 2012.

\bibitem{Ch13}
D.~Chiron.
\newblock Stability and instability for subsonic traveling waves of the
  nonlinear {S}chr\"odinger equation in dimension one.
\newblock {\em Anal. PDE}, 6(6):1327--1420, 2013.

\bibitem{DaHo09}
K.~Datchev and J.~Holmer.
\newblock Fast soliton scattering by attractive delta impurities.
\newblock {\em Comm. Partial Differential Equations}, 34(7-9):1074--1113, 2009.

\bibitem{DeLeWe16}
F.~Delebecque, S.~Le~Coz, and R.~M. Weishäupl.
\newblock Multi-speed solitary waves of nonlinear {Schr\"odinger} systems:
  theoretical and numerical analysis.
\newblock {\em Comm. Math. Sci.}, 14(6):1599–1624, 2016.

\bibitem{DiGa07}
L.~Di~Menza and C.~Gallo.
\newblock The black solitons of one-dimensional {NLS} equations.
\newblock {\em Nonlinearity}, 20(2):461--496, 2007.

\bibitem{FuJe08}
R.~Fukuizumi and L.~Jeanjean.
\newblock Stability of standing waves for a nonlinear {S}chr\"odinger equation
  with a repulsive {D}irac delta potential.
\newblock {\em Discrete Contin. Dyn. Syst.}, 21(1):121--136, 2008.

\bibitem{FuOhOz08}
R.~Fukuizumi, M.~Ohta, and T.~Ozawa.
\newblock Nonlinear {S}chr\"odinger equation with a point defect.
\newblock {\em Ann. Inst. H. Poincar\'e Anal. Non Lin\'eaire}, 25(5):837--845,
  2008.

\bibitem{Ga08}
C.~Gallo.
\newblock The {C}auchy problem for defocusing nonlinear {S}chr\"odinger
  equations with non-vanishing initial data at infinity.
\newblock {\em Comm. Partial Differential Equations}, 33(4-6):729--771, 2008.

\bibitem{gaveaus86}
B.~{Gaveau} and L.~S. {Schulman}.
\newblock {Explicit time-dependent Schr\"odinger propagators.}
\newblock {\em {J. Phys. A, Math. Gen.}}, 19:1833--1846, 1986.

\bibitem{GeMaWe16}
F.~Genoud, B.~A. Malomed, and R.~M. Weish{\"a}upl.
\newblock Stable {NLS} solitons in a cubic-quintic medium with a delta-function
  potential.
\newblock {\em Nonlinear Anal.}, 133:28--50, 2016.

\bibitem{GeOh12}
V.~Georgiev and M.~Ohta.
\newblock Nonlinear instability of linearly unstable standing waves for
  nonlinear {S}chr\"odinger equations.
\newblock {\em J. Math. Soc. Japan}, 64(2):533--548, 2012.

\bibitem{Ge06}
P.~G{\'e}rard.
\newblock The {C}auchy problem for the {G}ross-{P}itaevskii equation.
\newblock {\em Ann. Inst. H. Poincar\'e Anal. Non Lin\'eaire}, 23(5):765--779,
  2006.

\bibitem{Ge08}
P.~G{\'e}rard.
\newblock The {G}ross-{P}itaevskii equation in the energy space.
\newblock In {\em Stationary and time dependent {G}ross-{P}itaevskii
  equations}, volume 473 of {\em Contemp. Math.}, pages 129--148. Amer. Math.
  Soc., Providence, RI, 2008.

\bibitem{GeZh09}
P.~G{\'e}rard and Z.~Zhang.
\newblock Orbital stability of traveling waves for the one-dimensional
  {G}ross-{P}itaevskii equation.
\newblock {\em J. Math. Pures Appl. (9)}, 91(2):178--210, 2009.

\bibitem{GoHoWe04}
R.~H. Goodman, P.~J. Holmes, and M.~I. Weinstein.
\newblock Strong {NLS} soliton-defect interactions.
\newblock {\em Phys. D}, 192(3-4):215--248, 2004.

\bibitem{GrSm14}
P.~Gravejat and D.~Smets.
\newblock Asymptotic stability of the black soliton for the
  {G}ross-{P}itaevskii equation.
\newblock {\em Proc. Lond. Math. Soc. (3)}, 111(2):305--353, 2015.

\bibitem{GrShSt87}
M.~Grillakis, J.~Shatah, and W.~Strauss.
\newblock Stability theory of solitary waves in the presence of symmetry. {I}.
\newblock {\em J. Funct. Anal.}, 74(1):160--197, 1987.

\bibitem{GrShSt90}
M.~Grillakis, J.~Shatah, and W.~Strauss.
\newblock Stability theory of solitary waves in the presence of symmetry. {II}.
\newblock {\em J. Funct. Anal.}, 94(2):308--348, 1990.

\bibitem{HoMaZw07-1}
J.~Holmer, J.~Marzuola, and M.~Zworski.
\newblock Fast soliton scattering by delta impurities.
\newblock {\em Comm. Math. Phys.}, 274(1):187--216, 2007.

\bibitem{HoMaZw07-2}
J.~Holmer, J.~Marzuola, and M.~Zworski.
\newblock Soliton splitting by external delta potentials.
\newblock {\em J. Nonlinear Sci.}, 17(4):349--367, 2007.

\bibitem{HoZw07}
J.~Holmer and M.~Zworski.
\newblock Slow soliton interaction with delta impurities.
\newblock {\em J. Mod. Dyn.}, 1(4):689--718, 2007.

\bibitem{IaLe14}
I.~Ianni and S.~Le~Coz.
\newblock Multi-speed solitary wave solutions for nonlinear {S}chr\"odinger
  systems.
\newblock {\em J. Lond. Math. Soc. (2)}, 89(2):623--639, 2014.

\bibitem{kato}
T.~Kato.
\newblock {\em Perturbation Theory for linear operators}.
\newblock Classics in Mathematics. Springer, second edition, 1980.

\bibitem{LeFuFiKsSi08}
S.~Le~Coz, R.~Fukuizumi, G.~Fibich, B.~Ksherim, and Y.~Sivan.
\newblock Instability of bound states of a nonlinear {S}chr\"odinger equation
  with a {D}irac potential.
\newblock {\em Phys. D}, 237(8):1103--1128, 2008.

\bibitem{Li02}
Z.~Lin.
\newblock Stability and instability of traveling solitonic bubbles.
\newblock {\em Adv. Differential Equations}, 7(8):897--918, 2002.

\bibitem{manoukian89}
E.~B. {Manoukian}.
\newblock {Explicit derivation of the propagator for a Dirac delta potential.}
\newblock {\em {J. Phys. A, Math. Gen.}}, 22(1):67--70, 1989.

\bibitem{No14}
D.~Noja.
\newblock Nonlinear {S}chr\"odinger equation on graphs: recent results and open
  problems.
\newblock {\em Philos. Trans. R. Soc. Lond. Ser. A Math. Phys. Eng. Sci.},
  372(2007):20130002, 20, 2014.

\bibitem{schulman86}
L.~Schulman.
\newblock Application of the propagator for the delta function potential.
\newblock in \emph{Path integrals from me{V} to {M}e{V}} (edited by M.C.
  Gutzwiller, A. Inomata, J.R.Klauder and L. Streit), World Publishing,
  Singapour, 1986.

\end{thebibliography}


\end{document}